\newcommand{\gen}[1]{\langle #1\rangle}
\newcommand\ke{\mathop{\rm Ker}}
\newcommand\im{\mathop{\rm Im}}
\newcommand\Hom{\mathop{\rm Hom}}
\newcommand\aut{\mathop{\rm Aut}}
\newcommand\End{\mathop{\rm End}}
\newbox\xrat@below
\newbox\xrat@above
\newcommand{\xrightarrowtail}[2][]{%
  \setbox\xrat@below=\hbox{\ensuremath{\scriptstyle #1}}%
  \setbox\xrat@above=\hbox{\ensuremath{\scriptstyle #2}}%
  \pgfmathsetlengthmacro{\xrat@len}{max(\wd\xrat@below,\wd\xrat@above)+3.6em}%
  \mathrel{\tikz [>->,baseline=-.75ex]
                 \draw (0,0) -- node[below=-2pt] {\box\xrat@below}
                                node[above=-2pt] {\box\xrat@above}
                       (\xrat@len,0) ;}}
\newbox\xrat@below
\newbox\xrat@above
\newcommand{\xtwoheadrightarrow}[2][]{%
  \setbox\xrat@below=\hbox{\ensuremath{\scriptstyle #1}}%
  \setbox\xrat@above=\hbox{\ensuremath{\scriptstyle #2}}%
  \pgfmathsetlengthmacro{\xrat@len}{max(\wd\xrat@below,\wd\xrat@above)+3.6em}%
  \mathrel{\tikz [->>,baseline=-.75ex]
                 \draw (0,0) -- node[below=-2pt] {\box\xrat@below}
                                node[above=-2pt] {\box\xrat@above}
                       (\xrat@len,0) ;}}
\newtheorem{theorem}{Theorem}
\newtheorem{lemma}[theorem]{Lemma}
\newtheorem{example}[theorem]{Example}
\newtheorem{cor}[theorem]{Corollary}
\newtheorem{prop}[theorem]{Proposition}
\newtheorem{prob}{Problem}
\newtheorem*{thmA}{Theorem A}
\newtheorem*{thmB}{Corollary B}
\newtheorem*{thmC}{Theorem C}
\newtheorem*{thmD}{Theorem D}
\newtheorem*{thmE}{Theorem E}
\numberwithin{theorem}{section}
\numberwithin{prob}{section}
\begin{document}
\thispagestyle{empty}
\centerline{}
\textcolor{Blue}{{\LARGE\textsc{\textbf{\begin{center}A determinant for automorphisms of groups
\footnote{The author is member of GNSAGA-INdAM and ADV-AGTA. The work is carried under the co-financing of the European Union - "FSE-REACT-EU, PON Research and Innovation
2014-2020, DM 1062/202".}
\end{center}}}}}

\vspace{0.6cm}

\noindent\hfil\textcolor{Maroon}{{\Large\textsc{\textbf{Mattia Brescia}}}}\hfil

\vspace{1cm}
\justify
\textcolor{Blue}{\textbf{\centerline{Abstract}}}
\begin{changemargin}{1cm}{1cm}
Let $H$ and $K$ be groups. In this paper we introduce a concept of determinant for endomorphisms of $H\times K$ and some concepts of incompatibility for group pairs as a measure of how much $H$ and $K$ are far from being isomorphic. With the aid of the tools developed, we give a characterisation of invertible automorphisms of $H\times K$ by means of their determinants and an explicit description of $\aut(H\times K)$ as a group of $2$-by-$2$ matrices, in case $H$ or $K$ belong to some relevant classes of groups. Many theoretical and practical applications of the determinants will be presented, together with examples and an analysis on some computational advantages of the determinants. In particular, we give a matrix characterization of the whole automorphism group of some relevant classes of groups.
\end{changemargin}

Mathematics Subject Classification (2020): 20F28, 20E36, 20H99

Keywords: Automorphisms; direct products; infinite groups; determinant

\vskip 2\baselineskip

\section{Introduction}

Despite the many recent advances, the study of automorphisms of groups is still at an early stage and several are the unsolved theoretical problems concerning even apparently simple questions. Moreover, a knowledge about the general behaviour of automorphisms has been proven to be crucial in many areas of mathematics and, in recent years, increasing emphasis has been put on infinite groups and their automorphisms in different fields. For instance, in the context of (skew) braces and the Yang-Baxter equation, automorphisms of (non-)abelian groups appear in a very natural way as an action of multiplicative group over the additive group. Also, direct products and automorphisms of groups are one of the most common way to construct examples of braces (see, among the others, the construction of a simple left brace in \cite{Bach} and more in general \cite{CeJeOk}). Another area is that of group-based cryptography, where infinite non-abelian groups have been employed to produce secure cryptographic protocols which make use of some prescribed automorphisms. As in the case of the twisted conjugacy problem, a knowledge of the automorphisms of a group can prove, loosely speaking, secure or insecure one specific protocol (about this and other problems, see, for instance, \cite{MyShUs} and \cite{Sh}).

The study of automorphism of direct products of groups has a long story and a matrix definition of endomorphisms of a direct product of finitely many isomorphic non-abelian groups dates back to \cite{John}, in the context of groups satisfying Poincaré duality. Successively, Bidwell, Curran and McCaughan in \cite{BidCurMcC}, and then Bidwell in \cite{Bid}, studied the automorphism group of the direct product of finitely many finite groups. In particular, they proved that if two groups $H$ and $K$ share no isomorphic non-trivial direct factor, then $\aut(H\times K)$ can be fully described as a particular set of $2$-by-$2$ matrices, which we will call $\mathcal{A}_{H,K}$ (for its definition, see Section \ref{SecPre}). This brings a clear computational advantage and provides many useful applications (among the others, see, for instance, the very recent \cite{CCCo}, \cite{King},
\cite{Send1} and \cite{Send2}). On the other hand, such a good description was still missing for infinite groups. Moreover, in \cite{BidCurMcC} it is proven, under certain conditions, that $\mathcal{A}_{H,K}$ is a subset of $\aut(H\times K)$, but nothing is said about its group structure, except for the case when it coincides with the whole automorphism group. One of the purposes of this paper is to provide the theoretical background to complete the study of $\mathcal{A}_{H,K}$ for finite groups $H$ and $K$ and to extend the results to infinite groups. This will lead to a matrix characterization of the automorphism group of some relevant classes of groups, together with efficient algorithms checking the bijectivity of endomorphisms of (not necessarily finite) groups.

\subsection{A roadmap}
\medskip

Here we summarize the content of the paper, with a focus on the main results. Most of our notation is standard and can be found in \cite{RobCour}.

After the preliminaries, in Section \ref{SecDet} we define the determinants for some endomorphisms of $H\times K$ and, in particular, for every element of $\mathcal{A}_{H,K}$. This allows to prove, by means of Theorem \ref{propinv}, that an automorphism $\varphi$ of $H\times K$ is invertible if and only if one of its determinants is definable and invertible. In this case, $\varphi^{-1}$ is given an explicit formulation in terms of the components of $\varphi$ seen as a $2$-by-$2$ matrix. Although the determinants are not generally definable on every automorphism, the fact that this is always possible when $\varphi$ belongs to $\mathcal{A}_{H,K}$ makes it possible, by means of Lemma \ref{detiff}, to talk about \textit{the} determinant of every element of $\mathcal{A}_{H,K}$. Section \ref{SecDet} is concluded with a sketch of the theory for the case of direct products of $n$ groups for $n\geq2$, defining determinants in the general case, as well as stating results which can be derived from the case $n=2$. One of the main results of the section is the following characterization of invertible endomorphisms by means of the determinants (see Theorem \ref{propinvGen}).

\begin{thmA}
Let  $H_1,\ldots, H_n$ be groups and let $\varphi$ be an endomorphism of $H_1\times\ldots\times H_n$. Then $\varphi$ is invertible if and only if a determinant of $\varphi$ can be defined and and is invertible.
\end{thmA}

From this we will get immediately the following characterization of every invertible endomorphisms belonging to a specific subset of endomorphisms (see Corollary \ref{CorDetGen}).

\begin{thmB}
Let $H_1,\ldots, H_n$ be groups and let $\varphi$ be an element of $\mathcal{A}_{H_1,\ldots, H_n}$. Then $\varphi$ is invertible if and only $\det\varphi$ is invertible.
\end{thmB}

In Section \ref{SecPairs}, we define particular pairs of groups, which will provide a general framework where our tools can be implemented, together with general results about $\mathcal{A}_{H,K}$. Incompatibility, as we call it, may in fact be seen as some way of telling how far two groups are from being isomorphic. The main result of the section may be summed up as follows.

\begin{thmC}
Let $H$ and $K$ be groups. If
\begin{itemize}
\item $H$ and $K$ are incompatible and they both satisfy the minimal condition on normal subgroups (Theorem \ref{incompatible}) or
\item $H$ and $K$ are totally incompatible (Proposition \ref{htotincompatible}),
\end{itemize}
then $\aut (H\times K)\simeq\mathcal{A}_{H,K}$.
\end{thmC}

Indeed, group pairs and determinants are used in Section \ref{SecAppl} to find some specific classes of groups for which the whole $\aut(H\times K)$ is isomorphic with $\mathcal{A}_{H,K}$. Here we give some extensions of the results in \cite{BidCurMcC} to the infinite case, we answer to their open issue about $\mathcal{A}_{H,K}$ being a subgroup of $\aut(H\times K)$ and sharpen some of their results, also in the finite case, under the additional (and obvious) condition that $H$ and $K$ share no non-trivial direct factor.

\begin{thmD}
Let $H$ and $K$ be groups satisfying one of the following properties:
\begin{itemize}
\item $H$ and $K$ satisfy both the maximal and the minimal condition on normal subgroups (Theorem \ref{ThCompAut});
\item $H$ and $K$ are abelian and one of them is divisible-by-bounded-by-free (Theorem \ref{PropAb});
\item one between $H$ and $K$ is a stem group (Theorem \ref{stem}).
\end{itemize}
Then $\aut (H\times K)\simeq\mathcal{A}_{H,K}$ if and only if they have no common non-trivial direct factor.
\end{thmD}

Notice that in particular for these group pairs the determinant can be defined over all the automorphisms, so that, as shown in the last subsection of Section \ref{SecAppl}, the computational complexity of determining the inverse of an automorphism of a direct products of not necessarily finite groups can be drastically reduced, provided that the groups satisfy some good hypotheses. This, together with the explicit formula for inverse automorphisms, has an implementation in computer algebra systems as a natural application. The main result is the following.

\begin{thmE} Let $H$ and $K$ be groups and let $H$ be finite of order $m$. Let $\varphi$ be an endomorphism of $H\times K$ and let $\pi$ and $\iota$ be the canonical projection of $H\times K$ over $K$ and the immersion of $K$ into $H\times K$, respectively. If $\pi\varphi\iota$ is invertible, then $\varphi$ can be checked to be bijective in $\binom{m}{2}$ steps.

In particular, the hypothesis on $\pi\varphi\iota$ can be dropped if $H$ or $K$ satisfy one of the properties in Theorem C.
\end{thmE}

One of the strengths of this result is clearly that one gets a method to calculate the invertibility (and more) of a $\varphi\in\End(H\times K)$ with finitely many steps, even if one between $H$ and $K$ is an arbitrary infinite group.

In Section \ref{SecEx}, we implement the determinants to give several explicit examples, which have a twofold purpose: on one hand, they are meant to show how the theory developed so far can be employed; on the other hand, they point out that the hypotheses of many of our results cannot be removed or loosened.

We conclude the paper with a section about several possible future developments of the theory produced so far, both in a practical and theoretical direction. In particular, we stress the algorithmic implementation of determinants and give more definitions of group pairs which will be interesting to investigate.

\section{Preliminaries}\label{SecPre}

We start giving some easy, general results for direct products of monoids. In the following, if $M$ is a monoid and $f$ and $g$ are functions whose images are subsets of $M$, we will say that $[\im f,\im g]=1$ meaning that $\im f$ and $\im g$ commute elementwise. Let $H$ and $K$ be monoids and take into account the following set which is a slight generalization of that defined in \cite{John} and in \cite{BidCurMcC} $$\mathcal{M}_{H,K}=\left\{\left(\begin{array}{cc}\alpha &\beta \\ \gamma & \delta\end{array}\right):\begin{array}{lll} \alpha\in \End H, &\beta\in \Hom(K,H), &[\im\alpha,\im\beta]=1 \\
\gamma\in \Hom(H,K), &\delta\in \End K, &[\im\gamma,\im\delta]=1
\end{array}
\right\}$$

Endowed with row-by-column multiplication $``\cdot"$, where multiplying by coordinates means composing functions and a sum of functions is the usual pointwise sum, $(M,\cdot)$ is a monoid, which is isomorphic with $\End(H\times K)$ by the following Proposition whose easy proof can be retrieved from \cite{BidCurMcC} or in more general terms from \cite{John}.

\begin{prop}\label{end}
Let $H$ and $K$ be monoids. Then $\End(H\times K)\simeq \mathcal{M}_{H,K}$.
\end{prop}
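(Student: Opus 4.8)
The plan is to construct an explicit isomorphism between $\End(H\times K)$ and $\mathcal{M}_{H,K}$ using the canonical inclusions and projections. Write $\iota_H\colon H\to H\times K$, $h\mapsto(h,1)$, and $\iota_K\colon K\to H\times K$, $k\mapsto(1,k)$, for the inclusions, and $\pi_H,\pi_K$ for the two projections. Given $\varphi\in\End(H\times K)$, I would set $\alpha=\pi_H\varphi\iota_H$, $\beta=\pi_H\varphi\iota_K$, $\gamma=\pi_K\varphi\iota_H$ and $\delta=\pi_K\varphi\iota_K$; being composites of monoid homomorphisms, these automatically lie in $\End H$, $\Hom(K,H)$, $\Hom(H,K)$ and $\End K$ respectively. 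This defines a candidate map $\Phi\colon\varphi\mapsto\left(\begin{smallmatrix}\alpha&\beta\\\gamma&\delta\end{smallmatrix}\right)$.

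First I would check that $\Phi$ lands in $\mathcal{M}_{H,K}$. The key observation is that $\iota_H(h)=(h,1)$ and $\iota_K(k)=(1,k)$ commute in $H\times K$, so their images $\varphi(h,1)=(\alpha(h),\gamma(h))$ and $\varphi(1,k)=(\beta(k),\delta(k))$ commute as well; comparing coordinates yields exactly $[\im\alpha,\im\beta]=1$ and $[\im\gamma,\im\delta]=1$. Next, writing every element as $(h,k)=(h,1)(1,k)$ and applying $\varphi$ gives the formula $\varphi(h,k)=(\alpha(h)\beta(k),\gamma(h)\delta(k))$, so $\varphi$ is completely recovered from its matrix. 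Conversely, I would define $\Psi$ on $\mathcal{M}_{H,K}$ by the same formula $\left(\begin{smallmatrix}\alpha&\beta\\\gamma&\delta\end{smallmatrix}\right)\mapsto\bigl[(h,k)\mapsto(\alpha(h)\beta(k),\gamma(h)\delta(k))\bigr]$ and verify it is a monoid endomorphism: the commutativity conditions are precisely what is needed to rewrite $\alpha(h)\beta(k)\alpha(h')\beta(k')$ as $\alpha(hh')\beta(kk')$, and similarly in the second coordinate. By construction $\Phi$ and $\Psi$ are mutually inverse, so $\Phi$ is a bijection.

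Finally, to see that $\Phi$ is a monoid homomorphism I would compute $\Phi(\varphi\circ\psi)$ directly, with $\psi\leftrightarrow\left(\begin{smallmatrix}\alpha'&\beta'\\\gamma'&\delta'\end{smallmatrix}\right)$. Expanding $\varphi(\psi(h,k))$ and using that $\alpha,\beta,\gamma,\delta$ are homomorphisms, the relation $[\im\alpha,\im\beta]=1$ lets one reorder the four factors in the first coordinate into the shape $[(\alpha\alpha')(h)(\beta\gamma')(h)]\cdot[(\alpha\beta')(k)(\beta\delta')(k)]$, which is exactly the $(h,k)$-value attached to the row-by-column product of the two matrices; the second coordinate is symmetric. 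Since composing functions corresponds to the matrix product and the pointwise monoid operation to the matrix ``sum'', this gives $\Phi(\varphi\circ\psi)=\Phi(\varphi)\cdot\Phi(\psi)$.

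The only genuinely delicate point throughout is the bookkeeping of non-commutativity: every place where two factors must be swapped is controlled by, and in fact forces, the two commutator conditions defining $\mathcal{M}_{H,K}$. I therefore expect the main obstacle to be purely one of keeping careful track of these swaps rather than of introducing any new idea, which is consistent with the statement being an ``easy'' result drawn from \cite{BidCurMcC} and \cite{John}.
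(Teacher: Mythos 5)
Your proposal is correct and follows essentially the same route as the paper's own (omitted, but sketched in the source) proof: define the four components via $\pi_\bullet\varphi\iota_\bullet$, derive the commutator conditions from the fact that $(h,1)$ and $(1,k)$ commute, and exhibit the explicit inverse map $\left(\begin{smallmatrix}\alpha&\beta\\\gamma&\delta\end{smallmatrix}\right)\mapsto\bigl[(h,k)\mapsto(\alpha(h)\beta(k),\gamma(h)\delta(k))\bigr]$. Your additional explicit check that composition corresponds to row-by-column multiplication is exactly the routine verification the paper leaves to the reader.
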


\medskip

Let $H$ and $K$ be two monoids and let $\varphi$ be an automorphism of $H\times K$. Making use of Proposition \ref{end}, in the following we will frequently write $$\varphi=\left(\begin{array}{cc}\alpha &\beta \\ \gamma & \delta\end{array}\right)\quad\text{and}\quad\varphi^{-1}=\left(\begin{array}{cc}\alpha' &\beta' \\ \gamma' & \delta'\end{array}\right)$$ where the components satisfy the properties inherited by $\mathcal{M}_{H,K}$.

\smallskip

With abuse of notation, throughout the paper we will regard $\aut(H\times K)$ as a subset of $\mathcal{M}_{H,K}$ for any two monoids $H$ and $K$.

From Proposition \ref{end}, we get the following well-known result, which is also easy to prove directly.

\begin{cor}\label{cor1}
Let $H$ and $K$ be monoids such that $\Hom(H,K)=0=\Hom(K,H)$. Then $\aut (H\times K)\simeq\aut H\times\aut K$.
\end{cor}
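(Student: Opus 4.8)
The plan is to read the corollary directly off Proposition \ref{end}. By that proposition $\End(H\times K)$ is isomorphic, as a monoid, with $\mathcal{M}_{H,K}$, and (using the convention already fixed in the preliminaries) $\aut(H\times K)$ is exactly the group of invertible elements of this monoid. So I would first determine what $\mathcal{M}_{H,K}$ looks like under the stated hypothesis, and then extract its units.

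First I would use $\Hom(H,K)=0=\Hom(K,H)$ to kill the off-diagonal entries. In the defining description of $\mathcal{M}_{H,K}$ every $\beta$ lies in $\Hom(K,H)$ and every $\gamma$ lies in $\Hom(H,K)$, so the hypothesis forces $\beta=0$ and $\gamma=0$ for every matrix. Then $\im\beta$ and $\im\gamma$ are trivial, whence the two constraints $[\im\alpha,\im\beta]=1$ and $[\im\gamma,\im\delta]=1$ become vacuous, and
$$\mathcal{M}_{H,K}=\left\{\left(\begin{array}{cc}\alpha &0 \\ 0 & \delta\end{array}\right):\alpha\in\End H,\ \delta\in\End K\right\}.$$

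Next I would check that $\left(\begin{smallmatrix}\alpha &0 \\ 0 & \delta\end{smallmatrix}\right)\mapsto(\alpha,\delta)$ is a monoid isomorphism $\mathcal{M}_{H,K}\simeq\End H\times\End K$: it is plainly a bijection onto $\End H\times\End K$, and since the off-diagonal entries vanish the row-by-column product collapses to componentwise composition, $\left(\begin{smallmatrix}\alpha_1 &0 \\ 0 & \delta_1\end{smallmatrix}\right)\cdot\left(\begin{smallmatrix}\alpha_2 &0 \\ 0 & \delta_2\end{smallmatrix}\right)=\left(\begin{smallmatrix}\alpha_1\alpha_2 &0 \\ 0 & \delta_1\delta_2\end{smallmatrix}\right)$, which is exactly multiplication in the direct product. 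Composing with the isomorphism of Proposition \ref{end} gives $\End(H\times K)\simeq\End H\times\End K$ as monoids.

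Finally I would pass to the groups of units. A monoid isomorphism maps the unit group onto the unit group, and in a direct product $\End H\times\End K$ the units are precisely $\aut H\times\aut K$, since $(\alpha,\delta)$ is invertible exactly when both $\alpha$ and $\delta$ are. As $\aut(H\times K)$ is the unit group of $\End(H\times K)$, this yields $\aut(H\times K)\simeq\aut H\times\aut K$. I do not expect any genuine obstacle here; the only step deserving a word of care is the passage from the monoid isomorphism to the group statement, namely confirming that a diagonal matrix is invertible in $\mathcal{M}_{H,K}$ if and only if both of its diagonal entries are automorphisms, which is immediate from the diagonal multiplication rule just established.
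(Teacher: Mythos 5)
Your proposal is correct and is exactly the argument the paper intends: the paper gives no written proof, merely noting that the corollary follows from Proposition \ref{end}, and the implied route is precisely yours --- the hypothesis kills the off-diagonal entries, $\mathcal{M}_{H,K}$ collapses to the diagonal monoid $\End H\times\End K$, and passing to unit groups gives $\aut(H\times K)\simeq\aut H\times\aut K$. No gaps.
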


In this case, $\aut (H\times K)$ can be regarded as a particular diagonal subgroup of $\mathcal{M}_{H,K}$. Taking this result as a further motivation, one of our goals will be that of looking for conditions on $H$ and $K$ which will give a "good" description of $\aut (H\times K)$ in terms of its matrix representation. We begin inspecting general properties of automorphisms of direct products of monoids. The following is straightforward.

\begin{lemma}\label{cases}
Let $H$ and $K$ be monoids and let $\varphi$ be an automorphism of $H\times K$. Then the following relations hold
 \begin{equation*}
        \begin{matrix}
(1.1)& \alpha \alpha' + \beta \gamma' = 1
&\qquad&
(2.1)& \alpha' \alpha + \beta' \gamma = 1
\\
(1.2)& \alpha \beta' + \beta \delta' = 0
&\qquad&
(2.2)& \alpha' \beta + \beta' \delta = 0
\\
(1.3)& \gamma \alpha' + \delta \gamma' = 0
&\qquad&
(2.3)& \gamma' \alpha + \delta' \gamma = 0
\\
(1.4)& \gamma \beta' + \delta \delta' = 1
&\qquad&
(2.4)& \gamma' \beta + \delta' \delta = 1
    \end{matrix}
  \end{equation*}
\end{lemma}

Recall that an endomorphism of a monoid $M$ is said to be \textit{normal} if it commutes with every inner automorphism of $M$. In particular, if $\varphi$ is a normal endomorphism of $M$, then $\im\varphi^n$ is a normal submonoid of $M$ for any non-negative integer $n$.

\begin{lemma}\label{norm}
Let $H$ and $K$ be monoids and let $\varphi$ be an automorphism of $H\times K$. Then
\begin{itemize}
\item[(1)] If $\alpha$ is surjective, $\im\beta\leq Z(H)$. Similarly, if $\delta$ is surjective, $\im\gamma\leq Z(K)$;
\item[(2)] $\alpha\alpha'$, $\beta\gamma'$ and $\beta'\gamma$ are normal endomorphisms of $H$;
\item[(3)] $\delta\delta'$, $\gamma\beta'$ and $\beta\gamma'$ are normal endomorphisms of $K$.
\end{itemize}
\end{lemma}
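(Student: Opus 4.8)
For part (1), I would argue directly from the defining conditions of $\mathcal{M}_{H,K}$. By Proposition \ref{end} the matrix of $\varphi$ satisfies $[\im\alpha,\im\beta]=1$, so every element of $\im\alpha$ commutes with every element of $\im\beta$. If $\alpha$ is surjective then $\im\alpha=H$, whence every element of $H$ commutes with every element of $\im\beta$; that is, $\im\beta\leq Z(H)$. The statement for $\delta$ and $\im\gamma$ is the symmetric one, using $[\im\gamma,\im\delta]=1$.

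For parts (2) and (3) I would first record two structural facts. Each of the four maps listed is a genuine endomorphism of $H$: $\alpha,\alpha'\in\End H$ by definition, while $\beta\gamma'$ and $\beta'\gamma$ are composites $H\to K\to H$ of homomorphisms. Next, writing $G=H\times K$ and using the inclusions and projections $\iota_H,\iota_K,\pi_H,\pi_K$, the subgroups $\varphi(\iota_H(H))$ and $\varphi(\iota_K(K))$ are images of the normal subgroups $\iota_H(H),\iota_K(K)$ under the automorphism $\varphi$, hence normal in $G$. Applying the surjective homomorphism $\pi_H$ then shows that $\im\alpha=\pi_H\varphi\iota_H(H)$ and $\im\beta=\pi_H\varphi\iota_K(K)$ are normal subgroups of $H$, and likewise $\im\alpha',\im\beta'$ are normal in $H$ (with the symmetric statements in $K$ for part (3)). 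In particular $C_H(\im\alpha)$ is a normal subgroup of $H$ which, by part (1)'s commutator condition $[\im\alpha,\im\beta]=1$, contains $\im\beta$ and therefore the image of $\beta\gamma'$. This already delivers the normality of all the relevant images, which is precisely the feature exploited by the remark preceding the lemma.

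To obtain full normality as endomorphisms I would then reduce to a centralizer condition. Since an endomorphism $\theta$ of $H$ always satisfies $\theta(ghg^{-1})=\theta(g)\theta(h)\theta(g)^{-1}$, it commutes with every inner automorphism of $H$ exactly when $\theta(g)\theta(h)\theta(g)^{-1}=g\theta(h)g^{-1}$ for all $g,h$, i.e. when $g^{-1}\theta(g)\in C_H(\im\theta)$ for every $g\in H$. The plan is to verify this containment map by map. The composites are the tractable case: since $\im(\beta\gamma')\leq\im\beta\leq C_H(\im\alpha)$, I would combine relation (1.1) of Lemma \ref{cases}, namely $\alpha\alpha'+\beta\gamma'=1$, which writes $g=\alpha\alpha'(g)\cdot\beta\gamma'(g)$ with the two factors commuting and the second lying in $C_H(\im\alpha)$, together with the analogous inverse relation (2.1) applied to $\varphi^{-1}$, in order to locate $g^{-1}(\beta\gamma')(g)$ inside the appropriate centralizer, and symmetrically for $\beta'\gamma$ and the $K$-side composites of part (3).

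The main obstacle is this same centralizer condition for the diagonal maps $\alpha,\alpha'$ (and dually $\delta,\delta'$). The homomorphism property together with (1.1) gives only $g^{-1}\alpha\alpha'(g)\in C_H(\im\alpha)$, whereas normality of $\alpha$ demands the sharper statement $g^{-1}\alpha(g)\in C_H(\im\alpha)$, and these two differ. Closing this gap is where the fact that $\varphi$ is an \emph{automorphism} must be used in full, through the interaction between the normal direct factors $\iota_H(H),\iota_K(K)$ of $G$ and their images under $\varphi$, and through the commuting-image constraints of $\mathcal{M}_{H,K}$. I expect this to be the delicate step, and the one where any additional hypothesis — such as a surjectivity assumption as isolated in part (1), or the restriction to the maps whose images are forced to centralise $\im\alpha$ — would have to be invoked to secure the containment.
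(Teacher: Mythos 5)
Your part (1) is exactly the paper's argument: surjectivity of $\alpha$ turns the defining relation $[\im\alpha,\im\beta]=1$ of $\mathcal{M}_{H,K}$ into $\im\beta\leq Z(H)$. Your plan for the composite maps also works and is essentially what the paper does: by Lemma \ref{cases} (1.1) one writes $h=\alpha\alpha'(h)\cdot\beta\gamma'(h)$ with the two factors commuting, so $h^{-1}\beta\gamma'(h)=\alpha\alpha'(h)^{-1}\in\im\alpha\leq C_H(\im\beta)\leq C_H(\im\beta\gamma')$, which is precisely the centralizer condition you reduce normality to; the symmetric computation with (2.1) handles $\beta'\gamma$ and the $K$-side maps. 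Your preliminary detour through the normality of $\im\alpha=\pi_H\varphi\iota_H(H)$ as the image of the normal subgroup $\varphi(H\times 1)$ is correct but not needed for this part, and note that by itself it only gives normality of $\im\alpha$, not of the iterated images $\im\alpha^n$ that the remark before the lemma (and Lemma \ref{min}) actually require.

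The place where you stop, namely normality of $\alpha$ and $\alpha'$ (dually $\delta$, $\delta'$), is a genuine gap in your proposal, but your diagnosis of it is accurate, and you should be aware that the paper's own proof does not close it either. The paper's computation establishes only that $x^h=x^{\alpha\alpha'(h)}$ for every $x\in\im\alpha$, i.e.\ that conjugation by $h$ on $\im\alpha$ is realised by the element $\alpha\alpha'(h)$ of $\im\alpha$; this does yield, inductively, that every $\im\alpha^n$ is a normal subgroup (the property used in all later applications), but it is strictly weaker than $x^h=x^{\alpha(h)}$, which is what commutation of $\alpha$ with every inner automorphism demands. Indeed, with the definition of normal endomorphism adopted in the paper the claim for $\alpha$ is false in general: take $K=1$ (or $\varphi$ diagonal on $H\times K$) with $\alpha$ an inner automorphism of $H=S_3$ induced by a transposition; then $\alpha$ commutes with no inner automorphism induced by an element outside its centralizer. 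So the assertion for $\alpha,\alpha',\delta,\delta'$ is only tenable in the weakened form ``conjugation on the image is induced from inside the image'' (equivalently, all $\im\alpha^n$ are normal), and, as you suspected, no manipulation of the relations of Lemma \ref{cases} will upgrade $\alpha\alpha'(h)h^{-1}\in C_H(\im\alpha)$ to $\alpha(h)h^{-1}\in C_H(\im\alpha)$. You were right not to pretend otherwise; the correct resolution is to prove and use the weaker statement.
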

\begin{proof}
(1) follows immediately from the commuting relations of $\mathcal{M}_{H,K}$.

(2) and (3) are completely symmetrical and it will be enough for us to show that $\alpha\alpha'$, $\beta\gamma'$ and $\beta'\gamma$ are normal endomorphisms of $H$. To this aim, let $h$ be an invertible element of $H$ and $x$ be an element in $\im\alpha$. Then, by Lemma \ref{cases} (1.1) we may write $h=\alpha\alpha'(h)\beta\gamma'(h)$. Since $h$ is invertible, $[\im\alpha,\im\beta]=1$ and clearly $\im\alpha\alpha'\subseteq\im\alpha$ and $\im\beta\gamma'\subseteq\im\beta$, we have that $\alpha\alpha'(h)$ and $\beta\gamma'(h)$ are invertible, so that we may consider the element $x^{\beta\gamma'(h)}$, which equals $x$, again because $[\im\alpha,\im\beta]=1$. This shows that $x^h=x^{\alpha\alpha'(h)}$. This holds for every element of $\im\alpha$, so that, if we take $m$ to be an element of $H$, we get that $$\alpha\alpha'(m^{h})=\alpha\alpha'(h^{-1})\alpha\alpha'(m)\alpha\alpha'(h)=(\alpha\alpha'(m))^{\alpha\alpha'(h)}=(\alpha\alpha'(m))^h.$$ Hence $\alpha\alpha'$ is a normal endomorphism of $H$.

Let now $y$ be an element of $\im \beta\gamma'$. For exactly the same reasons as before, we have that $y^h=y^{\beta\gamma'(h)}$. Since this holds for every element of $\im \beta\gamma'$, if $n$ is any element of $H$ we get $$\beta\gamma'(n^{h})=\beta\gamma'(h^{-1})\beta\gamma'(n)\beta\gamma'(h)=(\beta\gamma'(n))^{\beta\gamma'(h)}=(\beta\gamma'(n))^h.$$ Then, also $\beta\gamma'$ is a normal endomorphism of $H$. In a totally similar way, using Lemma \ref{cases} (2.1) we can show that $\beta'\gamma$ is normal, and the proof is concluded.
\end{proof}

\section{The determinants}\label{SecDet}

To go on with our study we need inverses of elements, so we are now on switching to groups. Let $H$ and $K$ be groups and let $\varphi$ be an element of $\End(H\times K)$. If $\delta$ is invertible, we define $\det_H\varphi=\alpha-\beta\delta^{-1}\gamma$ to be the $H$-\textit{determinant} of $\varphi$. In the same way, if $\alpha$ is invertible, we define $\det_K\varphi=\delta-\gamma\alpha^{-1}\beta$, the $K$-\textit{determinant} of $\varphi$. In general, $\det_H\varphi$ or $\det_K\varphi$ need not be homomorphisms.

If one between the $H$- and the $K$-determinant of $\varphi$ can be defined and is invertible, it turns out that $\varphi$ is an automorphism of $H\times K$ and that $\varphi^{-1}$ can be given explicitly in terms of $\alpha$, $\beta$, $\gamma$ and $\delta$. Moreover, if $\det_H\varphi$, say, is invertible, then it is also an automorphism.

\begin{prop}\label{invgen}
Let $H$ and $K$ be groups and let $\varphi$ be an endomorphism of $H\times K$. Then the following hold:
\begin{itemize}
\item[(1)] If $\delta$ is invertible and $\Delta_H=\det_H\varphi$ is invertible, then $$\left(\begin{array}{cc}\Delta_H^{-1} &-\Delta_H^{-1}\beta\delta^{-1} \\ -\delta^{-1}\gamma\Delta_H^{-1} & (1+\delta^{-1}\gamma\Delta_H^{-1}\beta)\delta^{-1}\end{array}\right)$$ is the inverse of $\varphi$. Moreover, $\det_K\varphi^{-1}=\delta^{-1}$ and $\det_K\varphi$ is a homomorphism.
\item[(2)] If $\alpha$ is invertible and $\Delta_K=\det_K\varphi$ is invertible, then $$\left(\begin{array}{cc}(1+\alpha^{-1}\beta\Delta_K^{-1}\gamma)\alpha^{-1} &-\alpha^{-1}\beta\Delta_K^{-1} \\ -\Delta_K^{-1}\gamma\alpha^{-1} & \Delta_K^{-1}\end{array}\right)$$ is the inverse of $\varphi$. Moreover, $\det_H\varphi^{-1}=\alpha^{-1}$ and $\det_H\varphi$ is a homomorphism.
\end{itemize}
\end{prop}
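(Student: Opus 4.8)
The plan is to recognise the displayed matrix as the Schur-complement (block-triangular) inverse familiar from linear algebra and to verify it by hand, the only genuine work being to check that the non-commutativity of the pointwise ``addition'' on $\mathcal{M}_{H,K}$ and the failure of left distributivity do not spoil the classical identities. I will prove part (1); part (2) then follows by the symmetry $H\leftrightarrow K$, $\alpha\leftrightarrow\delta$, $\beta\leftrightarrow\gamma$ noted in the statement (with Lemma \ref{norm}(1) supplying $\im\beta\le Z(H)$ in place of $\im\gamma\le Z(K)$, since there $\alpha$ is the surjective map).

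First I would check that $\varphi$ is injective. If $\varphi(h,k)=(1,1)$, then $\gamma(h)\delta(k)=1$ and $\alpha(h)\beta(k)=1$; solving the first relation for $k=\delta^{-1}(\gamma(h)^{-1})$ (legitimate since $\delta$ is invertible) and substituting into the second gives $\alpha(h)\,(\beta\delta^{-1}\gamma(h))^{-1}=\Delta_H(h)=1$, whence $h=1$ because $\Delta_H$ is injective and $\Delta_H(1)=1$, and then $k=1$. Thus $\ke\varphi$ is trivial.

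Next I would obtain surjectivity together with the explicit inverse by verifying that the product of $\varphi$ with the displayed matrix $\psi$ is the identity of $\mathcal{M}_{H,K}$. It is essential to compute $\varphi\psi$ and \emph{not} $\psi\varphi$: in $\varphi\psi$ every left-hand factor is one of the honest homomorphisms $\alpha,\beta,\gamma,\delta$, so left distributivity over the pointwise product is valid, whereas in $\psi\varphi$ one would have to distribute across $\Delta_H^{-1}$, which is not yet known to be a homomorphism. Entry by entry the computation collapses via $\Delta_H\Delta_H^{-1}=1$, $\delta\delta^{-1}=1$ and cancellation, but two places use the structural hypotheses: combining the two summands in the $(1,1)$ and $(1,2)$ entries requires $[\im\alpha,\im\beta]=1$ in order to commute an $\im\alpha$-valued term past an $\im\beta$-valued one (the pointwise ``addition'' being non-commutative), while the $(2,2)$ entry reduces to $k\mapsto X(k)^{-1}kX(k)$ with $X=\gamma\Delta_H^{-1}\beta\delta^{-1}$, which is the identity precisely because $\im\gamma\le Z(K)$ by Lemma \ref{norm}(1) (applicable as $\delta$, being invertible, is surjective). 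From $\varphi\psi=1$, i.e. $\varphi\circ\psi=\mathrm{id}_{H\times K}$ via Proposition \ref{end}, $\varphi$ is surjective; with injectivity it is an automorphism, and composing $\varphi\circ\psi=\mathrm{id}$ with $\varphi^{-1}$ yields $\psi=\varphi^{-1}$, which is then automatically a homomorphism. This verification of the four entries against the non-ring behaviour of $\mathcal{M}_{H,K}$ is where I expect the main obstacle to lie.

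Finally I would read off the two ``moreover'' assertions from the entries $\alpha'=\Delta_H^{-1}$, $\beta'=-\Delta_H^{-1}\beta\delta^{-1}$, $\gamma'=-\delta^{-1}\gamma\Delta_H^{-1}$, $\delta'=(1+\delta^{-1}\gamma\Delta_H^{-1}\beta)\delta^{-1}$ of $\varphi^{-1}$. Since $\alpha'=\Delta_H^{-1}$ is invertible, $\det_K\varphi^{-1}=\delta'-\gamma'\alpha'^{-1}\beta'$ is defined, and a short simplification using $\alpha'^{-1}=\Delta_H$ gives $\gamma'\alpha'^{-1}\beta'=\delta^{-1}\gamma\Delta_H^{-1}\beta\delta^{-1}$, which is exactly the summand separating $\delta'$ from $\delta^{-1}$; hence $\det_K\varphi^{-1}=\delta^{-1}$. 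For the homomorphism claim, note that whenever $\det_K\varphi=\delta-\gamma\alpha^{-1}\beta$ is defined (that is, when $\alpha$ happens to be invertible) it is the pointwise difference of the two homomorphisms $\delta$ and $\gamma\alpha^{-1}\beta$; because $\im(\gamma\alpha^{-1}\beta)\subseteq\im\gamma\le Z(K)$ is central, this difference is again a homomorphism. This last observation is clean, but it is worth flagging that $\alpha$ need not be invertible under the hypotheses of (1), so the assertion is to be read as holding exactly when $\det_K\varphi$ is defined.
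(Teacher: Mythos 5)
Your argument is correct and follows essentially the same route as the paper's: an entry-by-entry verification of the matrix product, with the single genuinely structural input being $\im\gamma\leq Z(K)$ (available by Lemma \ref{norm}(1) because $\delta$ is surjective), used at exactly the spot the paper uses it, namely to collapse the $(2,2)$ entry $-X+1+X$ with $X=\gamma\Delta_H^{-1}\beta\delta^{-1}$. The one structural difference is that the paper also asserts $\psi\varphi=1$, whereas you replace that second verification by a direct injectivity argument for $\varphi$ through $\Delta_H$; your variant is legitimate and arguably cleaner, since $\varphi\circ\psi=\mathrm{id}$ together with injectivity of $\varphi$ does force $\psi=\varphi^{-1}$. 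Two smaller remarks. First, $[\im\alpha,\im\beta]=1$ is not what makes the $(1,1)$ and $(1,2)$ entries work: the identity $(f+g)h=fh+gh$ (precomposition distributes on the right over the pointwise product, for arbitrary maps $f,g$) already gives $\alpha\Delta_H^{-1}-\beta\delta^{-1}\gamma\Delta_H^{-1}=(\alpha-\beta\delta^{-1}\gamma)\Delta_H^{-1}=\Delta_H\Delta_H^{-1}=1$ with the summands in the order in which the row-by-column product produces them, and the $(1,2)$ entry cancels by the same mechanism; the commuting hypothesis is available, so nothing breaks, but it is not the step doing the work. Second, and more substantively, the final clause: the paper's proof establishes that $\Delta_H=\det_H\varphi$ itself is an endomorphism of $H$ --- the non-trivial point, since a priori $\alpha-\beta\delta^{-1}\gamma$ is only a map --- and it gets this for free once $\psi=\varphi^{-1}$ is known, because $\varphi^{-1}\in\End(H\times K)\simeq\mathcal{M}_{H,K}$ forces its $(1,1)$ entry $\Delta_H^{-1}$, and hence $\Delta_H$, to be a homomorphism. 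You instead read the clause literally as a statement about $\det_K\varphi$ and prove it only conditionally on $\alpha$ being invertible; that conditional statement and your centrality argument for it are fine (and the subscripts in the printed statement do appear inconsistent with the proof), but you should also record the fact about $\Delta_H$, which is what the paper actually derives and what is needed later to treat the determinant as an endomorphism.
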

\begin{proof}
We will prove the thesis only in case $\delta$ is invertible, as the other case is completely symmetric.

Let $\Delta=\det_H\varphi$. Notice first that since $\delta$ is bijective, $\im\gamma$ is a central subgroup of $K$.  Moreover, from $\Delta^{-1}(\alpha-\beta\delta^{-1}\gamma)=1$ one get the following relations

$$1+\beta\delta^{-1}\gamma\Delta^{-1}=\alpha\Delta^{-1},\quad 1+\Delta^{-1}\beta\delta^{-1}\gamma=\Delta^{-1}\alpha,$$
which will be used without further reference.

Let $$\varphi'=\left(\begin{array}{cc}\alpha' &\beta' \\ \gamma' & \delta'\end{array}\right)$$ be the matrix defined in the thesis. We only prove that $\varphi\varphi'=1$, since $\varphi'\varphi=1$ can be obtained in a not very different way.
\begin{align*}
\alpha\alpha'+\beta\gamma'&=\alpha\Delta^{-1}-\beta\delta^{-1}\gamma\Delta^{-1}=(\alpha-\beta\delta^{-1}\gamma)\Delta^{-1}=\Delta\Delta^{-1}=1.\\[1em]
\alpha\beta'+\beta\delta'&=\alpha(-\Delta^{-1}\beta\delta^{-1}) +\beta(1+\delta^{-1}\gamma\Delta^{-1}\beta)\delta^{-1}=\\
&=-(1+\beta\delta^{-1}\gamma\Delta^{-1})\beta\delta^{-1}+\beta(1+\delta^{-1}\gamma\Delta^{-1}\beta)\delta^{-1}=0.\\[1em]
\gamma\alpha'+\delta\gamma'&=\gamma\Delta^{-1}-\delta(\delta^{-1}\gamma\Delta^{-1})=\gamma\Delta^{-1}-\gamma\Delta^{-1}=0.\\[1em]
\gamma\beta'+\delta\delta'&=\gamma(-\Delta^{-1}\beta\delta^{-1})+\delta(1+\delta^{-1}\gamma\Delta^{-1}\beta)\delta^{-1}=\\
&=-\gamma\Delta^{-1}\beta\delta^{-1}+1+\gamma\Delta^{-1}\beta\delta^{-1}=1
\end{align*}
Then $\varphi'$ is the inverse of $\varphi$.

Finally, $\alpha'$ is invertible and hence $\det_K\varphi'$ can be defined. Its value can be computed directly or just derived applying $\varphi=(\varphi^{-1})^{-1}$. Moreover, the inverse of a homomorphism is always a homomorphism, so that $\varphi'$ belongs to $\mathcal{M}_{H,K}$. Then $\Delta^{-1}$ and, hence, $\Delta$ are endomorphisms of $H$.
\end{proof}

From this and from Lemma \ref{norm} the following information can be derived.

\begin{cor}
Let $H$ and $K$ be groups and let $\varphi$ be an automorphism of $H\times K$ such that either $\delta$ and $\det_H\varphi$ are invertible or $\alpha$ and $\det_K\varphi$ are invertible. Then $\im\beta\leq Z(H)$ and $\im\gamma\leq Z(K)$. 
\end{cor}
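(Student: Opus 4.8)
The plan is to reduce the statement to Lemma~\ref{norm}(1), applied once to $\varphi$ and once to its inverse. By symmetry I treat only the case where $\delta$ and $\Delta_H=\det_H\varphi$ are invertible, the case with $\alpha$ and $\det_K\varphi$ invertible being obtained verbatim by interchanging the roles of $H$ and $K$, of $\alpha$ and $\delta$, and of $\beta$ and $\gamma$. In this case one of the two inclusions is immediate: since $\delta$ is invertible it is in particular surjective, so Lemma~\ref{norm}(1) gives $\im\gamma\leq Z(K)$ at once. This is exactly the remark already used at the start of the proof of Proposition~\ref{invgen}, and it disposes of half of the conclusion for free.

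For the inclusion $\im\beta\leq Z(H)$ I cannot argue directly with $\varphi$, because $\alpha$ is not assumed surjective; instead I would pass to $\varphi^{-1}$. By Proposition~\ref{invgen}(1) the automorphism $\varphi^{-1}$ has top-left component $\alpha'=\Delta_H^{-1}$, which is an invertible endomorphism of $H$ (this is recorded at the end of that proof) and hence surjective. Applying Lemma~\ref{norm}(1) to the automorphism $\varphi^{-1}$ therefore yields $\im\beta'\leq Z(H)$, where $\beta'=-\Delta_H^{-1}\beta\delta^{-1}$ is the top-right component of $\varphi^{-1}$.

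It then remains to transfer centrality from $\beta'$ back to $\beta$. Since $\beta'\in\Hom(K,H)$ its image is a subgroup of $H$, so it is unaffected by the pointwise inversion appearing in $-\Delta_H^{-1}\beta\delta^{-1}$; moreover $\delta^{-1}$ is a bijection of $K$, so that $\im\beta'=\Delta_H^{-1}(\im\beta)$, whence $\im\beta=\Delta_H(\im\beta')$. As $\Delta_H$ is an automorphism of $H$ it preserves the centre, $\Delta_H(Z(H))=Z(H)$, and therefore $\im\beta'\leq Z(H)$ forces $\im\beta\leq Z(H)$, as wanted. This completes the chosen case, and the symmetric case gives $\im\beta\leq Z(H)$ from $\alpha$ surjective and $\im\gamma\leq Z(K)$ from $\delta'=\Delta_K^{-1}$ surjective in the same manner.

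The whole argument is routine bookkeeping once the right objects are chosen. The only genuine idea—and hence the main, if modest, obstacle—is to read off the centrality of $\beta$ from the \emph{top-left} corner of $\varphi^{-1}$ rather than of $\varphi$: this is precisely the step where the invertibility of $\Delta_H$ is needed, since it is Proposition~\ref{invgen} that supplies the surjective map $\alpha'=\Delta_H^{-1}$ to feed into Lemma~\ref{norm}(1), and it is the fact that $\Delta_H$ is an automorphism that lets one push the conclusion back from $\beta'$ to $\beta$.
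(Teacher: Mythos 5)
Your proof is correct and follows essentially the route the paper intends: the corollary is stated there without a written proof, as an immediate consequence of Proposition~\ref{invgen} and Lemma~\ref{norm}(1), obtained by applying the lemma once to $\varphi$ (using the surjectivity of $\delta$) and once to the explicit inverse from Proposition~\ref{invgen} (whose top-left entry $\Delta_H^{-1}$ is surjective), exactly as you do. The transfer from $\im\beta'$ back to $\im\beta$ via the automorphism $\Delta_H$ is sound, since $\Delta_H$ is an invertible endomorphism and hence preserves $Z(H)$.
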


This leads us to define the following particular set of endomorphisms (see \cite{BidCurMcC} or \cite{John}), which will be seen to have many useful properties. Let $H$ and $K$ be groups and take into account the set $$\mathcal{A}_{H,K}=\left\{\left(\begin{array}{cc}\alpha &\beta \\ \gamma & \delta\end{array}\right):\begin{array}{ll} \alpha\in \aut H, &\beta\in \Hom(K,Z(H)) \\
\gamma\in \Hom(H,Z(K)), &\delta\in \aut K
\end{array}
\right\}.$$

As can be found in Example \ref{Ex1} and in Example \ref{ExTotIncPer}, $\mathcal{A}_{H,K}$ is in general not a group, even in the event that $H$ and $K$ are abelian groups (torsion-free and torsion, respectively) sharing no isomorphic non-trivial direct summand. However, as we will see, in some relevant cases $\mathcal{A}_{H,K}$ is isomorphic with the whole $\aut(H\times K)$. To inspect further these possibilities, the concept of determinant will be of use.

\smallskip 

Before going on, we describe $\mathcal{A}_{H,K}$ as a product of subgroups, which can be employed to compute its cardinality or its structure in some particular cases. Although the proof can already be retrieved from \cite{BidCurMcC}, it is easy and we give it here in a more general setting.

\begin{prop}\label{AStruc}
Let $H$ and $K$ be groups. Let $U$ and $L$ be the subgroups of upper and lower unitriangular matrices of $\mathcal{A}_{H,K}$, respectively, and let $D_1$ and $D_2$ be the diagonal matrices of $\mathcal{A}_{H,K}$ acting trivially on $K$ and $H$, respectively. If $\mathcal{A}_{H,K}$ is a subgroup of $\aut(H\times K)$, then $\mathcal{A}_{H,K}=D_1ULD_2$, $D_1\cap D_2=1$ and both $U$ and $L$ are normalized by $D_1\times D_2$.
\end{prop}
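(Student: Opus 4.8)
The plan is to realise every element of $\mathcal{A}_{H,K}$ as an explicit product of the four prescribed subgroups, reading off the factors by a Gaussian-elimination procedure; the $H$-determinant of Section \ref{SecDet} turns out to govern the $D_1$-factor. Throughout I regard the four sets concretely: $D_1$ consists of the matrices $\left(\begin{smallmatrix}\alpha & 0\\0&1\end{smallmatrix}\right)$ with $\alpha\in\aut H$, $D_2$ of the $\left(\begin{smallmatrix}1&0\\0&\delta\end{smallmatrix}\right)$ with $\delta\in\aut K$, while $U$ and $L$ consist of the $\left(\begin{smallmatrix}1&\beta\\0&1\end{smallmatrix}\right)$ and $\left(\begin{smallmatrix}1&0\\\gamma&1\end{smallmatrix}\right)$ respectively. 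A direct check with the row-by-column multiplication shows each is closed under products and inverses (e.g.\ the inverse of $\left(\begin{smallmatrix}1&\beta\\0&1\end{smallmatrix}\right)$ is $\left(\begin{smallmatrix}1&-\beta\\0&1\end{smallmatrix}\right)$), so all four are genuine subgroups, independently of the hypothesis on $\mathcal{A}_{H,K}$.

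I would dispatch the two easy assertions first. Since $\left(\begin{smallmatrix}\alpha&0\\0&1\end{smallmatrix}\right)=\left(\begin{smallmatrix}1&0\\0&\delta\end{smallmatrix}\right)$ forces $\alpha=1$ and $\delta=1$, we get $D_1\cap D_2=1$; moreover $D_1$ and $D_2$ commute elementwise, so $D_1\times D_2$ is precisely the group of diagonal matrices $\left(\begin{smallmatrix}\alpha&0\\0&\delta\end{smallmatrix}\right)$. Conjugating an element of $U$ by such a diagonal matrix yields $\left(\begin{smallmatrix}1&\alpha\beta\delta^{-1}\\0&1\end{smallmatrix}\right)$, and since $\alpha$ preserves $Z(H)$ and $\delta^{-1}\in\aut K$ the entry $\alpha\beta\delta^{-1}$ again lies in $\Hom(K,Z(H))$; hence $U$ is normalised, and the symmetric computation gives the same for $L$.

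The substance is the factorisation $\mathcal{A}_{H,K}=D_1ULD_2$. One inclusion is free: since $\mathcal{A}_{H,K}$ is assumed to be a subgroup it is closed under products, and $D_1,U,L,D_2\subseteq\mathcal{A}_{H,K}$, so $D_1ULD_2\subseteq\mathcal{A}_{H,K}$. For the reverse inclusion I take an arbitrary $M=\left(\begin{smallmatrix}a&b\\c&d\end{smallmatrix}\right)\in\mathcal{A}_{H,K}$ and clear entries: right-multiplying by $\left(\begin{smallmatrix}1&0\\0&d^{-1}\end{smallmatrix}\right)\in D_2$ and then by $\left(\begin{smallmatrix}1&0\\-c&1\end{smallmatrix}\right)\in L$ annihilates the off-diagonal of the bottom row and produces top-left entry $a-bd^{-1}c=\det_H M$; normalising this by a factor from $D_1$ and recording the residual upper entry as an element of $U$ yields $M=d_1\,u\,l\,d_2$ with $d_1=\left(\begin{smallmatrix}\det_H M&0\\0&1\end{smallmatrix}\right)$, $u=\left(\begin{smallmatrix}1&(\det_H M)^{-1}bd^{-1}\\0&1\end{smallmatrix}\right)$, $l=\left(\begin{smallmatrix}1&0\\c&1\end{smallmatrix}\right)$ and $d_2=\left(\begin{smallmatrix}1&0\\0&d\end{smallmatrix}\right)$. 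Equivalently, multiplying these four factors out reproduces $\left(\begin{smallmatrix}a&b\\c&d\end{smallmatrix}\right)$ on the nose.

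The one point that needs genuine work — the main obstacle — is to know that $\det_H M=a-bd^{-1}c$ is an \emph{automorphism} of $H$, for only then are $d_1\in D_1$ and $\beta=(\det_H M)^{-1}bd^{-1}\in\Hom(K,Z(H))$ legitimate. First, $\det_H M$ is a homomorphism because $\im b\le Z(H)$ makes the pointwise expression $a-bd^{-1}c$ multiplicative. For invertibility I use that $M$, lying in the subgroup $\mathcal{A}_{H,K}$, is an automorphism of $H\times K$, so Lemma \ref{cases} applies; writing $M^{-1}=\left(\begin{smallmatrix}a'&b'\\c'&d'\end{smallmatrix}\right)$, relations $(1.3)$ and $(1.1)$ give $(a-bd^{-1}c)a'=aa'-b(d^{-1}ca')=aa'+bc'=1$, while $(2.2)$ and $(2.1)$ give $a'(a-bd^{-1}c)=a'a-(a'bd^{-1})c=a'a+b'c=1$. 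Thus $\det_H M$ has the two-sided inverse $a'$ and so belongs to $\aut H$, with $(\det_H M)^{-1}=a'$; since automorphisms preserve $Z(H)$ and $\im b\le Z(H)$, the entry $\beta=a'bd^{-1}$ indeed lands in $\Hom(K,Z(H))$. This completes the reverse inclusion and hence the statement, all remaining verifications being purely mechanical matrix arithmetic.
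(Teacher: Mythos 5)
Your proof is correct, and the factorisation you arrive at is in substance the same one the paper uses: the paper first pulls out $\left(\begin{smallmatrix}\alpha&0\\0&1\end{smallmatrix}\right)$ and $\left(\begin{smallmatrix}1&0\\0&\delta\end{smallmatrix}\right)$, then splits the remaining unit-diagonal matrix as $D_1UL$, and if you multiply its two $D_1$-factors together you get exactly $\alpha-\beta\delta^{-1}\gamma=\det_H M$, i.e.\ your $d_1$. Where you genuinely diverge is in the one non-trivial step, the invertibility of that entry. The paper gets it essentially for free from the closure hypothesis: the product $\left(\begin{smallmatrix}1&-\beta\\0&1\end{smallmatrix}\right)\left(\begin{smallmatrix}1&0\\\gamma&1\end{smallmatrix}\right)$ lies in $\mathcal{A}_{H,K}$, whose $(1,1)$-entries are automorphisms by definition, so $1-\beta\gamma$ is invertible. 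You instead invoke that $M$ is an automorphism of $H\times K$ and use the relations of Lemma \ref{cases} for $M$ and $M^{-1}$ to exhibit $a'$ as a two-sided inverse of $a-bd^{-1}c$; the computation is sound (the distributivity manipulations are legitimate because $a'$, $b$ and $d^{-1}$ are homomorphisms and $\im b\le Z(H)$ makes the pointwise differences homomorphisms). Your route is a little longer but buys something the paper's does not state here: the explicit identification $(\det_H M)^{-1}=a'$, consistent with Proposition \ref{invgen}, which you then need anyway to see that the $U$-entry $a'bd^{-1}$ lands in $\Hom(K,Z(H))$. The remaining points ($D_1\cap D_2=1$, normalisation of $U$ and $L$ by the diagonal, and the easy inclusion $D_1ULD_2\subseteq\mathcal{A}_{H,K}$) are handled exactly as in the paper.
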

\begin{proof}
Let $\beta\in\Hom(K,Z(H))$ and $\gamma\in\Hom(H,Z(K))$. Since $\mathcal{A}_{H,K}$ is a group, then $\left(\begin{array}{cc}1 &-\beta \\ 0 & 1\end{array}\right)\left(\begin{array}{cc}1 &0 \\ \gamma & 1\end{array}\right)=\left(\begin{array}{cc}1-\beta\gamma &* \\ * & *\end{array}\right)$ is still an element of $\mathcal{A}_{H,K}$ and hence $1-\beta\gamma$ is an automorphism of $H\times K$. Then we may write $$\left(\begin{array}{cc}1 &\beta \\ \gamma & 1\end{array}\right)=\left(\begin{array}{cc}1-\beta\gamma &0 \\ 0 & 1\end{array}\right)\left(\begin{array}{cc}1 &(1-\beta\gamma)^{-1}\beta \\ 0 & 1\end{array}\right)\left(\begin{array}{cc}1 &0 \\ \gamma & 1\end{array}\right)$$ which is an element of $D_1UL$. On the other hand, if $\alpha\in\aut H$ and $\delta\in\aut K$, we may write $$\left(\begin{array}{cc}\alpha &\beta \\ \gamma & \delta\end{array}\right)=\left(\begin{array}{cc}\alpha &0 \\ 0 & 1\end{array}\right)\left(\begin{array}{cc}1 &\alpha^{-1}\beta\delta^{-1} \\ \gamma & 1\end{array}\right)\left(\begin{array}{cc}1 &0 \\ 0 & \delta\end{array}\right)$$ which is an element of $D_1(D_1UL)D_2=D_1ULD_2$. Finally, it is straightforward to check that that $D_1$ and $D_2$ have trivial intersection and that $D_1\times D_2$ normalizes both $U$ and $L$, so the proof is complete.
\end{proof}

\smallskip

We now come back to the determinant, starting with an easy and curious lemma.

\begin{lemma}\label{detiff}
Let $H$ and $K$ be groups and let $\varphi$ be an element of $\mathcal{A}_{H,K}$. Then the following are equivalent
\begin{itemize}
\item[(1)] $\det_H\varphi$ is invertible, and
\item[(2)] $\det_K\varphi$ is invertible.
\end{itemize}
Moreover, if these conditions hold, we have that $$(\det\!{}_H\varphi)^{-1}=\alpha^{-1}\beta(\det\!{}_K\varphi)^{-1}\gamma\alpha^{-1} + \alpha^{-1}$$ and $$(\det\!{}_K\varphi)^{-1}=\delta^{-1}\gamma(\det\!{}_H\varphi)^{-1}\beta\delta^{-1} + \delta^{-1}.$$
\end{lemma}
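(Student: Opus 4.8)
The plan is to factor $\varphi$ in two Gauss (Schur complement) ways; the equivalence then drops out of the invertibility of the diagonal middle factor, and the two identities follow by comparing the descriptions of $\varphi^{-1}$ supplied by Proposition \ref{invgen}. Since $\varphi\in\mathcal{A}_{H,K}$, both $\alpha$ and $\delta$ are automorphisms, so $\Delta_H:=\det_H\varphi=\alpha-\beta\delta^{-1}\gamma$ and $\Delta_K:=\det_K\varphi=\delta-\gamma\alpha^{-1}\beta$ are both defined. Throughout I would keep track of the centrality conditions $\im\beta\le Z(H)$ and $\im\gamma\le Z(K)$: these are exactly what makes each $\Delta$ an endomorphism and makes the sums appearing below legitimate.

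First I would verify the factorization
\[
\varphi=\left(\begin{array}{cc}1 & \beta\delta^{-1}\\ 0 & 1\end{array}\right)\left(\begin{array}{cc}\Delta_H & 0\\ 0 & \delta\end{array}\right)\left(\begin{array}{cc}1 & 0\\ \delta^{-1}\gamma & 1\end{array}\right)
\]
by multiplying out in $\mathcal{M}_{H,K}$, the only entry requiring an identity being $\Delta_H+\beta\delta^{-1}\gamma=\alpha$. The two outer unitriangular matrices lie in $\mathcal{A}_{H,K}$ and are invertible (their inverses are obtained by flipping the sign of the off-diagonal entry, which is legitimate since $\beta\delta^{-1}$ and $\delta^{-1}\gamma$ have central image). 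Hence $\varphi$ is an automorphism of $H\times K$ if and only if the diagonal middle factor is; and as $\delta\in\aut K$, this happens exactly when $\Delta_H\in\aut H$. The symmetric factorization through $\alpha$,
\[
\varphi=\left(\begin{array}{cc}1 & 0\\ \gamma\alpha^{-1} & 1\end{array}\right)\left(\begin{array}{cc}\alpha & 0\\ 0 & \Delta_K\end{array}\right)\left(\begin{array}{cc}1 & \alpha^{-1}\beta\\ 0 & 1\end{array}\right),
\]
gives in the same way that $\varphi$ is an automorphism if and only if $\Delta_K\in\aut K$. Chaining the two statements yields $\Delta_H$ invertible $\Longleftrightarrow\varphi$ invertible $\Longleftrightarrow\Delta_K$ invertible, which is the first assertion.

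For the identities I would then assume, as I now may, that $\alpha,\delta,\Delta_H,\Delta_K$ are all invertible. Proposition \ref{invgen}(1) applies (since $\delta$ and $\Delta_H$ are invertible) and Proposition \ref{invgen}(2) applies (since $\alpha$ and $\Delta_K$ are invertible); each describes $\varphi^{-1}$, and by uniqueness of the inverse the two resulting matrices coincide entrywise. Equating the two $(1,1)$-entries gives $\Delta_H^{-1}=(1+\alpha^{-1}\beta\Delta_K^{-1}\gamma)\alpha^{-1}=\alpha^{-1}\beta\Delta_K^{-1}\gamma\alpha^{-1}+\alpha^{-1}$, and equating the two $(2,2)$-entries gives $\Delta_K^{-1}=(1+\delta^{-1}\gamma\Delta_H^{-1}\beta)\delta^{-1}=\delta^{-1}\gamma\Delta_H^{-1}\beta\delta^{-1}+\delta^{-1}$, which are exactly the claimed formulas.

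The delicate point is not the algebra but the bookkeeping of centrality: one must be sure that $\beta\delta^{-1}$, $\delta^{-1}\gamma$, $\gamma\alpha^{-1}$, $\alpha^{-1}\beta$ have central image (so the unitriangular factors really are units of $\mathcal{A}_{H,K}$) and that $\Delta_H+\beta\delta^{-1}\gamma$ and $\gamma\alpha^{-1}\beta+\Delta_K$ collapse to $\alpha$ and $\delta$. It is precisely this centrality that keeps the factorization route clean and lets me avoid a direct Woodbury-type verification of the formulas: such a verification is treacherous here because the intermediate maps $\delta^{-1}$ and $\Delta_K^{-1}$ do \emph{not} have central image and so cannot be added or subtracted termwise.
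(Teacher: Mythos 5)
Your proof is correct, but it follows a genuinely different route from the paper's. The paper proves the equivalence by hand: assuming $\Delta_K=\delta-\gamma\alpha^{-1}\beta$ is invertible, it verifies element by element that $\alpha-\beta\delta^{-1}\gamma$ is injective and surjective, and it reads off the formula $(\det_H\varphi)^{-1}=\alpha^{-1}\beta\Delta_K^{-1}\gamma\alpha^{-1}+\alpha^{-1}$ directly from the witness used in the surjectivity step (the reverse direction being declared symmetric). You instead route everything through the two Schur-complement factorizations $\varphi=U\,\mathrm{diag}(\Delta_H,\delta)\,L=L'\,\mathrm{diag}(\alpha,\Delta_K)\,U'$, which reduces the equivalence to the triviality that a product with invertible outer factors is invertible iff the middle one is, and then you obtain both displayed identities at once by equating the two expressions for $\varphi^{-1}$ supplied by Proposition \ref{invgen} and invoking uniqueness of inverses in the monoid $\mathcal{M}_{H,K}\simeq\End(H\times K)$. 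Your bookkeeping is the right one: the centrality of $\im\beta$ and $\im\gamma$ is exactly what makes $\Delta_H$, $\Delta_K$ and the negated off-diagonal entries homomorphisms, so the unitriangular factors really are units, and right-distributivity of composition over the pointwise sum justifies expanding $(1+\alpha^{-1}\beta\Delta_K^{-1}\gamma)\alpha^{-1}$. What your approach buys is symmetry (both directions and both formulas come out of one argument) and it essentially re-proves the restriction of Theorem \ref{propinv} to $\mathcal{A}_{H,K}$ for free; what it costs is self-containedness, since it leans on Proposition \ref{invgen} for the explicit inverse matrices, whereas the paper's computation for this lemma stands on its own. There is no circularity, as Proposition \ref{invgen} precedes the lemma and does not depend on it.
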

\begin{proof}
By symmetry, we need only prove that (1) implies (2), and that (1) and (2) imply the first equality of the statement. Assume that $\delta-\gamma\alpha^{-1}\beta$ is invertible. Let $h$ be an element of $H$ such that $\alpha-\beta\delta^{-1}\gamma(h)=1$, namely, in other words, that $\alpha(h)=\beta\delta^{-1}\gamma(h)$. Since $\delta-\gamma\alpha^{-1}\beta$ is injective, form $$\delta-\gamma\alpha^{-1}\beta(\delta^{-1}\gamma(h))=\gamma-\gamma\alpha^{-1}\beta\delta^{-1}\gamma(h)=\gamma-\gamma(h)=1$$ it follows that $\delta^{-1}\gamma(h)=1$. From this we get $\alpha(h)=\beta(1)=1$ and hence $h=1$. Therefore, $\alpha-\beta\delta^{-1}\gamma$ is injective.

Let $h$ be an element of $H$ and let $k$ be an element of $K$ such that $\delta-\gamma\alpha^{-1}\beta(k)=\gamma\alpha^{-1}(h)$, namely that $-\gamma\alpha^{-1}\beta(k)=\delta(k^{-1})\gamma\alpha^{-1}(h)$. Then 
\begin{align*}
\alpha-\beta\delta^{-1}\gamma(\alpha^{-1}\beta(k)\alpha^{-1}(h))&=\beta(k)h(\beta\delta^{-1}\gamma\alpha^{-1}\beta(k^{-1}))(\beta\delta^{-1}\gamma\alpha^{-1}(h^{-1}))=\\
&=\beta(k)h\beta(k^{-1})(\beta\delta^{-1}\gamma\alpha^{-1}(h))(\beta\delta^{-1}\gamma\alpha^{-1}(h^{-1}))=h
\end{align*}
This shows that $\alpha-\beta\delta^{-1}\gamma$ is also surjective. In particular, from the proof of surjectivity one can get that $$(\alpha-\beta\delta^{-1}\gamma)^{-1}=\alpha^{-1}\beta(\delta-\gamma\alpha^{-1}\beta)^{-1}\gamma\alpha^{-1} + \alpha^{-1}$$ and we are done.
\end{proof}

On the other hand, notice that if $H$ is isomorphic with $K$ and both $H$ and $K$ are non-trivial, $\aut (H\times K)$ always contains an automorphism such that neither $\alpha$ nor $\delta$ are invertible.

Lemma \ref{detiff} suggests the following definition: let $H$ and $K$ be groups and let $\varphi$ be an element of $\mathcal{A}_{H,K}$. We let $\alpha-\beta\delta^{-1}\gamma$ be the \textit{determinant} of $\varphi$ and denote it by $\det\varphi$.

\begin{lemma}\label{inv}
Let $H$ and $K$ be groups, let $\varphi$ be an element of $\mathcal{A}_{H,K}$ and let $\Delta=\det\varphi$. If $\Delta$ is invertible, then $$\left(\begin{array}{cc}\Delta^{-1} &-\Delta^{-1}\beta\delta^{-1} \\ -\delta^{-1}\gamma\Delta^{-1} & (1+\delta^{-1}\gamma\Delta^{-1}\beta)\delta^{-1}\end{array}\right)$$ is the inverse of $\varphi$. Moreover, $\varphi^{-1}$ belongs to $\mathcal{A}_{H,K}$ and $\det\varphi^{-1}=\alpha^{-1}$.
\end{lemma}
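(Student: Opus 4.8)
The plan is to read off the inverse matrix directly from the general theory and then do the bookkeeping needed to place $\varphi^{-1}$ in $\mathcal{A}_{H,K}$. Since $\varphi\in\mathcal{A}_{H,K}$ its lower-right component $\delta$ is an automorphism of $K$, hence invertible, and $\Delta=\det\varphi=\det_H\varphi$ is invertible by hypothesis. Thus Proposition~\ref{invgen}(1) applies verbatim and identifies the displayed matrix as $\varphi^{-1}$; in particular $\varphi^{-1}$ is an automorphism of $H\times K$, so by Proposition~\ref{end} all four of its components $\alpha',\beta',\gamma',\delta'$ are homomorphisms. What remains is to upgrade this to membership in $\mathcal{A}_{H,K}$ and to compute the determinant.

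To show $\varphi^{-1}\in\mathcal{A}_{H,K}$ I would check the four defining conditions. The top-left component is $\alpha'=\Delta^{-1}$, which is invertible because $\Delta$ is, so $\alpha'\in\aut H$. The delicate point is the invertibility of the bottom-right component $\delta'=(1+\delta^{-1}\gamma\Delta^{-1}\beta)\delta^{-1}$, whose explicit form makes this far from obvious. Here Lemma~\ref{detiff} does the work: since $\det_H\varphi$ is invertible, so is $\det_K\varphi$, and the lemma's formula gives $(\det_K\varphi)^{-1}=\delta^{-1}\gamma\Delta^{-1}\beta\delta^{-1}+\delta^{-1}=(1+\delta^{-1}\gamma\Delta^{-1}\beta)\delta^{-1}=\delta'$. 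Hence $\delta'=(\det_K\varphi)^{-1}$ is invertible, and being a homomorphism it lies in $\aut K$.

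For the two off-diagonal conditions I would invoke the corollary following Proposition~\ref{invgen}, applied to the automorphism $\varphi^{-1}$ in place of $\varphi$: its top-left component $\alpha'=\Delta^{-1}$ and its $K$-determinant $\det_K\varphi^{-1}=\delta^{-1}$ (the latter supplied by Proposition~\ref{invgen}(1), and invertible since $\delta\in\aut K$) are both invertible, so the corollary's second alternative yields $\im\beta'\leq Z(H)$ and $\im\gamma'\leq Z(K)$. Combined with the homomorphism property this gives $\beta'\in\Hom(K,Z(H))$ and $\gamma'\in\Hom(H,Z(K))$, completing the verification that $\varphi^{-1}\in\mathcal{A}_{H,K}$.

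Finally, for the determinant: because $\alpha$ is invertible and $\det_K\varphi$ is invertible (as just noted), the symmetric statement Proposition~\ref{invgen}(2) applies to $\varphi$ and gives $\det_H\varphi^{-1}=\alpha^{-1}$; since $\varphi^{-1}$ now lies in $\mathcal{A}_{H,K}$, this is precisely $\det\varphi^{-1}=\alpha^{-1}$. The one genuine obstacle in the whole argument is the invertibility of $\delta'$: its closed form coming from Proposition~\ref{invgen}(1) gives no handle on bijectivity, and the decisive observation is that this very expression coincides with the inverse $K$-determinant produced by Lemma~\ref{detiff}. Everything else is bookkeeping with results already in place, so I would keep those steps brief.
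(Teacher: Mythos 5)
Your proof is correct and follows essentially the same route as the paper: the formula comes from Proposition~\ref{invgen}(1), and the key step in both arguments is identifying the lower-right component $(1+\delta^{-1}\gamma\Delta^{-1}\beta)\delta^{-1}$ with $(\det_K\varphi)^{-1}$ via Lemma~\ref{detiff}. The only (immaterial) difference is that you read this identity off the explicit inverse formula stated in Lemma~\ref{detiff}, whereas the paper obtains it by comparing the two forms of $\varphi^{-1}$ in Proposition~\ref{invgen}(1) and~(2).
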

\begin{proof}
The formula for $\varphi^{-1}$ can be immediately derived from Proposition \ref{invgen}. Since by Lemma \ref{detiff}, also $\det_K\varphi$ is invertible, then we can compare the two forms of $\varphi^{-1}$ in Proposition \ref{invgen} to get that $(1+\delta^{-1}\gamma\Delta^{-1}\beta)\delta^{-1}=\det_K^{-1}$, so that $\varphi^{-1}$ is an element of $\mathcal{A}_{H,K}$.

Finally, $\det\varphi^{-1}$ can be computed directly or just derived applying $\varphi=(\varphi^{-1})^{-1}$.
\end{proof}

From the fact that $1+\delta^{-1}\gamma\Delta^{-1}\beta=\Delta_K^{-1}\delta$ we obtain the following somewhat more pleasant formula for inverse automorphisms in $\mathcal{A}_{H,K}$.

\begin{prop}
Let $H$ and $K$ be groups, let $\varphi$ be an element of $\mathcal{A}_{H,K}$. If $\det\varphi$ is invertible, then $$\left(\begin{array}{cc}\Delta_H^{-1} &-\alpha^{-1}\beta\Delta_K^{-1} \\ -\delta^{-1}\gamma\Delta_H^{-1} & \Delta_K^{-1}\end{array}\right)$$ is the inverse of $\varphi$.
\end{prop}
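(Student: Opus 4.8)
The plan is to deduce the claimed formula from the inverse already computed in Lemma \ref{inv}, rather than re-verifying the four defining relations of $\mathcal{M}_{H,K}$ from scratch. Writing $\Delta=\det\varphi=\Delta_H$, Lemma \ref{inv} gives
$$\varphi^{-1}=\left(\begin{array}{cc}\Delta^{-1} &-\Delta^{-1}\beta\delta^{-1} \\ -\delta^{-1}\gamma\Delta^{-1} & (1+\delta^{-1}\gamma\Delta^{-1}\beta)\delta^{-1}\end{array}\right),$$
and by Lemma \ref{detiff} the invertibility of $\det\varphi$ forces $\Delta_K=\det_K\varphi$ to be invertible as well. Comparing this matrix with the one in the statement, the $(1,1)$ and $(2,1)$ entries agree on the nose, since $\Delta^{-1}=\Delta_H^{-1}$. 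For the $(2,2)$ entry I would invoke the identity $1+\delta^{-1}\gamma\Delta^{-1}\beta=\Delta_K^{-1}\delta$ recorded immediately before the statement, whence $(1+\delta^{-1}\gamma\Delta^{-1}\beta)\delta^{-1}=\Delta_K^{-1}\delta\delta^{-1}=\Delta_K^{-1}$. Thus the two matrices differ at most in their $(1,2)$ entries, and since the first is genuinely $\varphi^{-1}$, it suffices to prove the single identity $\Delta_H^{-1}\beta\delta^{-1}=\alpha^{-1}\beta\Delta_K^{-1}$.

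This last identity is the only real content, and I would handle it by clearing the outer automorphisms: multiplying on the left by $\Delta_H$ and on the right by $\delta$, it is equivalent to $\beta=\Delta_H\alpha^{-1}\beta\Delta_K^{-1}\delta$. Here I would exploit the centrality conditions $\im\beta\leq Z(H)$ and $\im\gamma\leq Z(K)$ built into $\mathcal{A}_{H,K}$, which let me treat the central-valued maps $\beta\delta^{-1}\gamma\alpha^{-1}$ and $p:=\delta^{-1}\gamma\alpha^{-1}\beta$ additively. Factoring, $\Delta_H\alpha^{-1}=1-\beta\delta^{-1}\gamma\alpha^{-1}$ and $\Delta_K=\delta(1-p)$, so that $\Delta_K^{-1}\delta=(1-p)^{-1}$, which also re-proves invertibility of $1-p$. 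Evaluating $\Delta_H\alpha^{-1}$ on an element $\beta(k)$ of $Z(H)$ and using that $\beta$ is a homomorphism yields $(\Delta_H\alpha^{-1})\beta=\beta(1-p)$; composing with $(1-p)^{-1}$ then collapses the right-hand side to $\beta$, as required.

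An equally viable alternative for the $(1,2)$ entry would be to start from the explicit expression $\Delta_H^{-1}=\alpha^{-1}\beta\Delta_K^{-1}\gamma\alpha^{-1}+\alpha^{-1}$ supplied by Lemma \ref{detiff}, right-multiply by $\beta\delta^{-1}$, and then use $\gamma\alpha^{-1}\beta=\delta-\Delta_K$ to telescope the two resulting summands into $\alpha^{-1}\beta\Delta_K^{-1}$; this again rests on distributing $\beta$ across a pointwise product of central-valued maps. The main obstacle throughout is precisely this bookkeeping: because $\det_H\varphi$ and $\det_K\varphi$ need not be homomorphisms, the cancellations are legitimate only after one checks that every intermediate sum is a product of elements lying in the centre of the appropriate factor. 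Once the $(1,2)$ identity is in hand, the proof closes immediately by uniqueness of inverses.
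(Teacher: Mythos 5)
Your proposal is correct, and its skeleton (deduce the formula from Lemma \ref{inv} together with the identity $1+\delta^{-1}\gamma\Delta_H^{-1}\beta=\Delta_K^{-1}\delta$ and Lemma \ref{detiff}) is exactly what the paper intends. The one place you diverge is the $(1,2)$ entry: the paper gets $-\alpha^{-1}\beta\Delta_K^{-1}$ for free, because Proposition \ref{invgen}(2) (applicable since $\alpha$ and, by Lemma \ref{detiff}, $\Delta_K$ are invertible) exhibits a second matrix equal to $\varphi^{-1}$ whose $(1,2)$ entry is already $-\alpha^{-1}\beta\Delta_K^{-1}$; uniqueness of inverses then lets one mix entries from the two forms, which is precisely the comparison already used in the proof of Lemma \ref{inv}. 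You instead verify $\Delta_H^{-1}\beta\delta^{-1}=\alpha^{-1}\beta\Delta_K^{-1}$ by hand, and your computation is sound: the factorizations $\Delta_H\alpha^{-1}=1-\beta\delta^{-1}\gamma\alpha^{-1}$ and $\Delta_K=\delta(1-p)$ use only right distributivity (which holds for arbitrary maps) and left distributivity through the homomorphisms $\delta$ and $\beta$, and the step $(\Delta_H\alpha^{-1})\beta=\beta(1-p)$ is exactly where the homomorphism property of $\beta$ is needed and is correctly invoked. So your argument costs one extra computation but avoids quoting the second form of the inverse; the paper's route is shorter but leans on having both displays of Proposition \ref{invgen} at hand.
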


We are now ready to state one of our main results about the determinant.

\begin{theorem}\label{propinv}
Let $H$ and $K$ be groups and let $\varphi$ be an endomorphism of $H\times K$ such that $\delta$ ($\alpha$, respectively) is invertible. Then $\varphi$ is invertible if and only if $\det_H\varphi$ ($\det_K\varphi$, respectively) is invertible.
\end{theorem}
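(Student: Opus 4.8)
The plan is to prove both implications simultaneously by a Schur-complement factorisation inside the monoid $\mathcal{M}_{H,K}$, which reduces the invertibility of $\varphi$ to that of a single diagonal block. I would treat the case in which $\delta$ is invertible and only remark that the case in which $\alpha$ is invertible is entirely symmetric, interchanging the roles of $H$ and $K$ and of the two diagonal entries. The very first step is to record that, since $\delta$ is in particular surjective, Lemma \ref{norm}(1) yields $\im\beta\leq Z(H)$. This centrality is the single fact that makes everything consistent: it guarantees that $\beta\delta^{-1}\gamma$ has central image, so that $\Delta_H=\alpha-\beta\delta^{-1}\gamma=\det_H\varphi$ is a genuine endomorphism of $H$, and it is exactly what is needed for the triangular factor below to be a bona fide element of $\mathcal{M}_{H,K}$.

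The heart of the argument is the identity
$$\varphi=\left(\begin{array}{cc}1 &\beta\delta^{-1} \\ 0 & 1\end{array}\right)\left(\begin{array}{cc}\Delta_H &0 \\ 0 & \delta\end{array}\right)\left(\begin{array}{cc}1 &0 \\ \delta^{-1}\gamma & 1\end{array}\right),$$
which I would verify by evaluating both sides on an arbitrary $(h,k)\in H\times K$; the only cancellation involved is $\Delta_H(h)\cdot\beta\delta^{-1}\gamma(h)=\alpha(h)$, which is immediate from the definition of $\Delta_H$. The outer two factors are units of $\mathcal{M}_{H,K}$, their inverses being obtained simply by negating the off-diagonal entry (the membership of the upper factor in $\mathcal{M}_{H,K}$ again uses $\im\beta\leq Z(H)$). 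The middle factor corresponds to the endomorphism $(h,k)\mapsto(\Delta_H(h),\delta(k))$, which is a unit precisely when both $\Delta_H$ and $\delta$ are automorphisms. Since $\delta$ is invertible by hypothesis, the middle factor is invertible if and only if $\Delta_H$ is. Finally, invoking the elementary fact that in any monoid a product $u\,b\,v$ with $u$ and $v$ invertible is itself invertible if and only if $b$ is, I conclude that $\varphi$ is invertible if and only if $\det_H\varphi$ is.

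I expect the main obstacle to be purely a matter of bookkeeping, namely keeping careful track of the non-abelian, pointwise "addition" of homomorphisms while checking both that each of the three factors genuinely lies in $\mathcal{M}_{H,K}$ and that they multiply to $\varphi$; all of this is governed by the centrality of $\im\beta$, so the conceptual content is light once that observation is in place. As a cross-check and alternative route, the sufficiency direction is already contained in Proposition \ref{invgen}(1), while for necessity one may instead verify directly, using relations (1.1) and (1.3) together with (2.1) and (2.2) of Lemma \ref{cases}, that the top-left block $\alpha'$ of $\varphi^{-1}$ satisfies $\Delta_H\alpha'=\alpha'\Delta_H=1$. In that computation the relevant terms cancel at once (for instance $\alpha\alpha'=h\cdot\beta\delta^{-1}\gamma\alpha'(h)$ from (1.1) and (1.3) combines with the definition of $\Delta_H$), so no further centrality is needed and one obtains $\alpha'=(\det_H\varphi)^{-1}=\det_H\varphi^{-1}$, recovering the explicit inverse of the determinant.
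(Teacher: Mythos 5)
Your factorisation idea is attractive and genuinely different from the paper's argument (the paper proves necessity by a direct element-wise computation, noting that $h\in\ke(\det_H\varphi)$ forces $(h,-\delta^{-1}\gamma(h))\in\ke\varphi$ and solving $\varphi(x,y)=(h,1)$ for surjectivity, and quotes Proposition \ref{invgen} for sufficiency). But as written your proof rests on a false first step: Lemma \ref{norm}(1) applied to a surjective $\delta$ yields $\im\gamma\leq Z(K)$, \emph{not} $\im\beta\leq Z(H)$. The defining relations of $\mathcal{M}_{H,K}$ are $[\im\alpha,\im\beta]=1$ and $[\im\gamma,\im\delta]=1$, so it is surjectivity of $\alpha$ that would centralise $\im\beta$. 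The claim is not merely misquoted but false under your hypotheses: take $H=K$ non-abelian with $\alpha=\gamma=0$ and $\beta=\delta$ the identity; this lies in $\mathcal{M}_{H,K}$, $\delta$ is invertible, and $\im\beta=H\not\leq Z(H)$. Since you declare this centrality to be ``the single fact that makes everything consistent'', the consequences you draw from it --- that $\det_H\varphi$ is an endomorphism (the paper explicitly warns that in general it need not be) and that the upper unitriangular factor is a bona fide element of $\mathcal{M}_{H,K}$ --- are unjustified and in general false.

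The factorisation itself, however, survives as an identity of \emph{maps of sets}: evaluating the right-hand side at $(h,k)$ gives first coordinate $\Delta_H(h)\cdot\beta\delta^{-1}\gamma(h)\cdot\beta(k)=\alpha(h)\beta(k)$ and second coordinate $\gamma(h)\delta(k)$, with no centrality used anywhere; and the two triangular factors are bijections of the underlying set (inverted by negating the off-diagonal entry pointwise) whether or not they are homomorphisms. Since an endomorphism of $H\times K$ is invertible exactly when it is bijective, and since ``$\det_H\varphi$ invertible'' can only mean bijective as a map, the conclusion that $\varphi$ is bijective if and only if $\Delta_H$ is follows. So your proof is repairable, but only by abandoning the claim that the three factors live in $\mathcal{M}_{H,K}$ and arguing purely set-theoretically; in that corrected form it is a clean alternative to the paper's computation. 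Your closing cross-check via relations (1.1)--(2.2) of Lemma \ref{cases} is essentially the paper's own route for the necessity direction.
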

\begin{proof}
As the two cases for $\alpha$ and $\delta$ are completely symmetrical, we prove only the one where $\delta$ is invertible. Assume first that $\varphi$ is invertible and let $h$ be an element of $H$ such that $\alpha-\beta\delta^{-1}\gamma(h)=1$. Then it is immediate to check that $$\left(\begin{array}{cc}\alpha &\beta \\ \gamma & \delta\end{array}\right)\left(\begin{array}{c}h \\ -\delta^{-1}\gamma(h)\end{array}\right)=\left(\begin{array}{c}1 \\ 1\end{array}\right)$$ and hence $h=1$. Let now $h$ be any element of $H$ and let $x$ and $y$ be elements of $H$ and $K$, respectively, such that $$\left(\begin{array}{cc}\alpha &\beta \\ \gamma & \delta\end{array}\right)\left(\begin{array}{c}x \\ y\end{array}\right)=\left(\begin{array}{c}h \\ 1\end{array}\right)$$ From this one gets that $\alpha-\beta\delta^{-1}\gamma(x)=h$ and hence $\alpha-\beta\delta^{-1}\gamma$ is invertible.

The converse, follows from Proposition \ref{invgen}.
\end{proof}

\begin{cor}
Let $H$ and $K$ be groups and let $\varphi$ be an element of $\mathcal{A}_{H,K}$. Then $\varphi$ is invertible if and only if $\det\varphi$ is invertible.
\end{cor}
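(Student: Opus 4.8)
The plan is to recognise this corollary as nothing more than the specialisation of Theorem \ref{propinv} to the subset $\mathcal{A}_{H,K}$, where the side hypothesis of that theorem is automatically in force. First I would recall that, by the very definition of $\mathcal{A}_{H,K}$, any $\varphi$ lying in it has $\alpha\in\aut H$ and $\delta\in\aut K$; in particular $\delta$ is invertible. By the definition introduced just after Lemma \ref{detiff}, the symbol $\det\varphi$ denotes $\alpha-\beta\delta^{-1}\gamma$, which is exactly $\det_H\varphi$. Hence the hypothesis ``$\delta$ invertible'' of Theorem \ref{propinv} holds for free, and the conclusion of that theorem reads precisely: $\varphi$ is invertible if and only if $\det_H\varphi=\det\varphi$ is invertible. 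That is the assertion, so the proof should reduce to a single invocation of Theorem \ref{propinv} together with the definition of $\det\varphi$.

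The only point I would pause on is the coherence of using ``the'' determinant as the criterion, rather than one of the two one-sided determinants. Since $\varphi\in\mathcal{A}_{H,K}$ forces $\alpha$ to be invertible as well, one could just as legitimately apply the $\alpha$-case of Theorem \ref{propinv}, which would phrase the condition in terms of $\det_K\varphi$. These two readings agree because Lemma \ref{detiff} guarantees that $\det_H\varphi$ is invertible exactly when $\det_K\varphi$ is; this is what makes $\det\varphi$ a well-defined invertibility test independent of the side chosen. I therefore expect no genuine obstacle here: the entire computational weight has already been absorbed into Proposition \ref{invgen} and Theorem \ref{propinv}, and the corollary merely records that belonging to $\mathcal{A}_{H,K}$ discharges the auxiliary invertibility hypothesis on $\delta$ (equivalently $\alpha$) at no cost.
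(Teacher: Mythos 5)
Your proposal is correct and matches the paper's intent exactly: the corollary is stated without proof as an immediate consequence of Theorem \ref{propinv}, since membership in $\mathcal{A}_{H,K}$ forces $\delta$ (and $\alpha$) to be invertible and $\det\varphi$ is by definition $\det_H\varphi$. Your additional remark that Lemma \ref{detiff} makes the choice of side immaterial is accurate and consistent with the paper's definition of the determinant on $\mathcal{A}_{H,K}$.
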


We conclude this section giving a necessary and sufficient condition for $\mathcal{A}_{H,K}$ to be a subgroup of $\aut(H\times K)$.

\begin{prop}\label{propsub}
Let $H$ and $K$ be groups. Then $\mathcal{A}_{H,K}$ is a subgroup of $\aut(H\times K)$ if and only if for every $\lambda\in\aut H$, $\mu\in \Hom(H,Z(K))$, $\nu\in\aut K$ and $\xi\in \Hom(K,Z(H))$, $\lambda+\xi\mu$ and $\nu+\mu\xi$ are bijective.
\end{prop}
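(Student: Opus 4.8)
The plan is to notice that the two families of maps appearing in the statement are precisely the possible determinants of elements of $\mathcal{A}_{H,K}$, and at the same time the diagonal entries produced when one multiplies two such elements. The equivalence then follows by combining Theorem \ref{propinv} (invertibility of $\varphi$ is detected by its determinant) with the multiplication rule in $\mathcal{M}_{H,K}$ and the inverse formula of Lemma \ref{inv}.

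First I would record the reparametrization. For $\varphi=\left(\begin{smallmatrix}\alpha&\beta\\\gamma&\delta\end{smallmatrix}\right)\in\mathcal{A}_{H,K}$ one has $\det_H\varphi=\alpha-\beta\delta^{-1}\gamma=\lambda+\xi\mu$ on setting $\lambda=\alpha\in\aut H$, $\xi=-\beta\in\Hom(K,Z(H))$ and $\mu=\delta^{-1}\gamma\in\Hom(H,Z(K))$ (the latter because $\delta^{-1}$ preserves $Z(K)$), and symmetrically $\det_K\varphi$ has the shape $\nu+\mu\xi$. Conversely, given any admissible triple, the matrix $\left(\begin{smallmatrix}\lambda&-\xi\\\mu&1\end{smallmatrix}\right)$ lies in $\mathcal{A}_{H,K}$, has invertible lower-right entry $\delta=1$, and has $H$-determinant $\lambda+\xi\mu$; an analogous matrix with invertible $\alpha=1$ realizes $\nu+\mu\xi$ as a $K$-determinant. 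Hence the condition ``$\lambda+\xi\mu$ and $\nu+\mu\xi$ are bijective for all admissible data'' is exactly the assertion that every $\det_H$ and every $\det_K$ of an element of $\mathcal{A}_{H,K}$ is invertible. (By Lemma \ref{detiff} the two conditions are in fact equivalent, though I would not need this.)

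For the implication $(\Rightarrow)$ I would use only the inclusion $\mathcal{A}_{H,K}\subseteq\aut(H\times K)$: applying it to the two specific matrices above, whose $\delta$, respectively $\alpha$, is the identity and so invertible, Theorem \ref{propinv} forces the corresponding determinant to be invertible, i.e. $\lambda+\xi\mu$ and $\nu+\mu\xi$ are bijective. For $(\Leftarrow)$, assuming all such maps bijective, there are three points to check. (i) Inclusion: any $\varphi\in\mathcal{A}_{H,K}$ has invertible $\delta$, and $\det_H\varphi=\lambda+\xi\mu$ is bijective by hypothesis, so Theorem \ref{propinv} makes $\varphi$ an automorphism. (ii) Closure under products: multiplying via the rule of $\mathcal{M}_{H,K}$, the top-left entry is $\alpha_1\alpha_2+\beta_1\gamma_2$, of the form $\lambda+\xi\mu$, and the bottom-right is $\gamma_1\beta_2+\delta_1\delta_2$, of the form $\nu+\mu\xi$, both bijective by hypothesis; meanwhile the two off-diagonal entries are automatically homomorphisms with image in $Z(H)$, respectively $Z(K)$, so the product again lies in $\mathcal{A}_{H,K}$. (iii) Closure under inverses: by (i) every $\det\varphi$ is invertible, so Lemma \ref{inv} gives $\varphi^{-1}\in\mathcal{A}_{H,K}$. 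Since the identity matrix lies in $\mathcal{A}_{H,K}$, it is a subgroup.

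The main obstacle is the bookkeeping in step (ii): one must verify that the diagonal entries of a product are exactly the maps named in the statement, and crucially that the off-diagonal entries require no bijectivity at all, landing in the appropriate centre automatically because $\alpha_1,\delta_1$ preserve $Z(H),Z(K)$ and the relevant composites factor through a centre. Getting the identifications $\xi\mu$ versus $\mu\xi$ (and the signs) right in the reparametrization, and confirming that as the matrix entries vary the triples $(\lambda,\mu,\xi)$ sweep out all admissible data, is the only genuinely delicate part; everything else is a direct appeal to Theorem \ref{propinv} and Lemma \ref{inv}.
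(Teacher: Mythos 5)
Your proposal is correct and follows essentially the same route as the paper: sufficiency via the observation that $\det\varphi=\alpha-\beta\delta^{-1}\gamma$ has the form $\lambda+\xi\mu$ (giving invertibility and $\varphi^{-1}\in\mathcal{A}_{H,K}$ by Lemma \ref{inv}), plus the direct check that the diagonal entries of a product again have the prescribed form. The only variation is in the necessity direction, where the paper multiplies two triangular matrices and invokes closure, while you apply Theorem \ref{propinv} to a single matrix $\left(\begin{smallmatrix}\lambda&-\xi\\\mu&1\end{smallmatrix}\right)$ --- a legitimate alternative that in fact uses only the inclusion $\mathcal{A}_{H,K}\subseteq\aut(H\times K)$ rather than the full subgroup hypothesis.
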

\begin{proof}
Assume first that $\mathcal{A}_{H,K}$ is a subgroup of $\aut(H\times K)$ and let $\lambda\in\aut H$, $\mu\in \Hom(H,Z(K))$, $\nu\in\aut K$ and $\xi\in \Hom(K,Z(H))$. Then $$\left(\begin{array}{cc}\lambda &\xi \\ 0 & 1\end{array}\right)\left(\begin{array}{cc}1 &0 \\ \mu & 1\end{array}\right)=\left(\begin{array}{cc}\lambda+\xi\mu &* \\ * & *\end{array}\right)$$ is still an element of $\mathcal{A}_{H,K}$ and hence $\lambda+\xi\mu$ is an automorphism of $H\times K$. Since the same holds for $\nu+\mu\xi$, the necessary condition is proved.

Conversely, let $\varphi=\left(\begin{array}{cc}\alpha &\beta \\ \gamma & \delta\end{array}\right)$ be an element of $\mathcal{A}_{H,K}$. Since the hypothesis yields that $\det\varphi=\alpha-\beta\delta^{-1}\gamma$ is invertible, $\varphi$ is invertible by Lemma \ref{inv} and $\varphi^{-1}$ belongs to $\mathcal{A}_{H,K}$, which hence contains the inverses of all its elements. Let now $\chi=\left(\begin{array}{cc}\varepsilon &\zeta \\ \eta & \theta\end{array}\right)$ be another element of $\mathcal{A}_{H,K}$ and take into account the product $\varphi\chi$. From the hypothesis we have that $\alpha\varepsilon+\beta\eta$ and $\gamma\zeta+\delta\theta$ are bijective, while it is clear that $\alpha\zeta+\beta\theta\in\Hom(K,Z(H))$ and that $\gamma\varepsilon+\delta\eta\in\Hom(H,Z(K))$. So $\varphi\chi$ is an element of $\mathcal{A}_{H,K}$ and $\mathcal{A}_{H,K}$ is a subgroup of $\aut(H\times K)$.
\end{proof}

\smallskip

\subsection{The general case: a sketch}\label{SubSketch}

Here we give the definition of determinants for automorphisms of a generic direct product of finitely many groups and sketch some properties. In the following we will denote with $I_k=\{1,\ldots,k\}$ be the set of the first $k$ positive integers for any positive integer $k$.

Let $H_1,\ldots, H_n$ be monoids. In analogy with $\mathcal{M}_{H,K}$ and $\mathcal{A}_{H,K}$ defined above, we define the following sets (see, for instance, \cite{Bid} or \cite{John}) $$\mathcal{M}_{H_1,\ldots,H_n}=\left\{\!\left(\begin{array}{ccc}\varphi_{1,1} &\cdots & \varphi_{1,n} \\ \vdots & \ddots & \vdots \\ \varphi_{n,1} & \cdots & \varphi_{n,n} \end{array}\right):\!\begin{array}{ll} \varphi_{i,j}\in \Hom(H_j,H_i), &\!\forall i,j\in I_n \\
\,\![\im\varphi_{i,j},\im\varphi_{i,k}]=1 & \!\forall i,j,k\in I_n\text{ and } j\neq k
\end{array}
\!\!\right\}$$
and
$$\mathcal{A}_{H_1,\ldots,H_n}=\left\{\!\left(\begin{array}{ccc}\varphi_{1,1} &\cdots & \varphi_{1,n} \\ \vdots & \ddots & \vdots \\ \varphi_{n,1} & \cdots & \varphi_{n,n} \end{array}\right):\!\begin{array}{ll} \varphi_{i,i}\in \aut(H_i), &\forall i\in I_n \\
\varphi_{i,j}\in \Hom(H_j,Z(H_i)) & \forall i,j\in I_n\text{ and } i\neq j
\end{array}
\!\!\right\}$$

Just as for Proposition \ref{end}, the following can be proved with some routine calculations.

\begin{theorem}\label{endGen}
Let $H_1,\ldots, H_n$ be monoids. Then $\End(H_1\times\cdots\times H_n)\simeq \mathcal{M}_{H_1,\ldots,H_n}$.
\end{theorem}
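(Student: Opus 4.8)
The plan is to mirror the proof of Proposition \ref{end}, replacing the two inclusion/projection pairs by the $n$ maps attached to the individual factors. Write $G=H_1\times\cdots\times H_n$, let $\iota_i\colon H_i\to G$ be the canonical inclusions and $\pi_i\colon G\to H_i$ the canonical projections, so that $\pi_i\iota_i=\mathrm{id}_{H_i}$, $\pi_i\iota_j$ is the trivial homomorphism for $i\neq j$, and every $g\in G$ factors as $g=\prod_{k\in I_n}\iota_k\pi_k(g)$, the factors commuting pairwise in $G$. To each $\varphi\in\End G$ I associate the matrix $\tau(\varphi)=(\varphi_{i,j})_{i,j}$ with $\varphi_{i,j}=\pi_i\varphi\iota_j\in\Hom(H_j,H_i)$.

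First I would check that $\tau(\varphi)$ really lies in $\mathcal{M}_{H_1,\ldots,H_n}$. For a fixed $i$ and indices $j\neq k$, the subgroups $\iota_j(H_j)$ and $\iota_k(H_k)$ commute elementwise in $G$; applying the homomorphism $\pi_i\varphi$ carries commuting elements to commuting elements, and $\pi_i\varphi\iota_j(H_j)=\im\varphi_{i,j}$ while $\pi_i\varphi\iota_k(H_k)=\im\varphi_{i,k}$, whence $[\im\varphi_{i,j},\im\varphi_{i,k}]=1$. Thus $\tau$ is a well-defined map into $\mathcal{M}_{H_1,\ldots,H_n}$.

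Next I would define the candidate inverse $\sigma$, sending a matrix $(\varphi_{i,j})\in\mathcal{M}_{H_1,\ldots,H_n}$ to the map $\psi\colon G\to G$ given coordinatewise by $\pi_i\psi(g)=\prod_{j\in I_n}\varphi_{i,j}\pi_j(g)$, the product taken in the fixed increasing order of $j$. The one genuine point to verify --- and the part I expect to require the most care --- is that $\psi$ is a homomorphism. Expanding $\pi_i\psi(gg')$ produces the alternating product $\prod_{j}\varphi_{i,j}\pi_j(g)\,\varphi_{i,j}\pi_j(g')$; to rearrange it into $\big(\prod_j\varphi_{i,j}\pi_j(g)\big)\big(\prod_j\varphi_{i,j}\pi_j(g')\big)=\pi_i\psi(g)\,\pi_i\psi(g')$ I use that the images $\im\varphi_{i,1},\ldots,\im\varphi_{i,n}$ pairwise commute elementwise, by the defining relations of $\mathcal{M}_{H_1,\ldots,H_n}$, so that factors of distinct index may be transposed at will; collecting the $g$-terms on the left and the $g'$-terms on the right then gives the claim. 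This shuffle argument, together with careful bookkeeping of the fixed ordering, is the heart of the matter, and everything else is formal.

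Finally I would check that $\tau$ and $\sigma$ are mutually inverse and that $\tau$ is multiplicative. Mutual inversion is immediate from $\pi_i\iota_j$ being the identity for $i=j$ and trivial otherwise, together with the factorization $g=\prod_k\iota_k\pi_k(g)$. For multiplicativity, given $\varphi,\chi\in\End G$ I compute $\pi_i\varphi\chi\iota_j$ by first writing $\chi\iota_j(h)=\prod_k\iota_k(\chi_{k,j}(h))$, applying the homomorphism $\varphi$ factor by factor, and then projecting by $\pi_i$; this yields $\prod_k\varphi_{i,k}\chi_{k,j}$, which is precisely the $(i,j)$ entry of the row-by-column product, so $\tau(\varphi\chi)=\tau(\varphi)\tau(\chi)$. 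Hence $\tau$ is a monoid isomorphism and $\End(H_1\times\cdots\times H_n)\simeq\mathcal{M}_{H_1,\ldots,H_n}$. An alternative route is induction on $n$ via Proposition \ref{end} applied to $(H_1\times\cdots\times H_{n-1})\times H_n$, but reconciling the resulting block commuting conditions with those of $\mathcal{M}_{H_1,\ldots,H_n}$ appears to cost at least as much bookkeeping as the direct argument.
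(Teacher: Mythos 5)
Your proposal is correct and follows exactly the route the paper intends: the paper gives no separate proof of Theorem \ref{endGen}, stating only that it follows "just as for Proposition \ref{end}" by routine calculations, and your argument is precisely that $n$-factor generalization (the maps $\varphi_{i,j}=\pi_i\varphi\iota_j$, the commuting conditions from elementwise commutation of the factors, the shuffle argument for the homomorphism property of the inverse map, and multiplicativity via $g=\prod_k\iota_k\pi_k(g)$). All the verifications check out, so there is nothing to add.
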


\smallskip

In general, the concept of determinant can be defined starting from the case $n=2$. Let $m$ and $n$ be positive integers such that $m\leq n$. Here an $m\times m$ matrix with values in a set $S$ will be just a function from $I\times I$ to $S$, where $I$ is a subset of order $m$ of $I_n$. This way we may have matrices "missing some rows and columns", which will make notation less cumbersome for our scopes. Let $H_1,\ldots, H_n$ be groups and let $$\varphi= \left(\begin{array}{ccc}\varphi_{1,1} &\cdots & \varphi_{1,n} \\ \vdots & \ddots & \vdots \\ \varphi_{n,1} & \cdots & \varphi_{n,n} \end{array}\right)$$ be an element of $\mathcal{M}_{H_1,\ldots,H_n}$. Let $k$ be a non-negative integer $k\leq n$ and let $F_k$ be the set of all injective functions from $I_k$ to $I_n$. Notice that $F_0$ has only one element, i.e. the empty function on an empty domain. With an abuse of notation, $F_0$ will be denoted by $\{\emptyset\}$.
For each $i,j\in I_n$ define $(\det^{\emptyset}\!\varphi)_{i,j}=\varphi_{i,j}$ or, in other words, $\det^{\emptyset}\!\varphi=\varphi$. In particular, $\det^{\emptyset}\!\varphi$ is an $n\times n$ matrix. Let $k$ be an element of $\{0,\ldots,n-2\}$ and assume that, if $g$ is an element of $F_k$ and $\det^g\varphi$ can be defined, then the latter is an $(n-k)\times(n-k)$ matrix whose domain is $(I_n\setminus \im g)\times(I_n\setminus \im g)$ and such that for every $i,j\in I_n\setminus \im g$, $(\det^g\varphi)_{i,j}\in Hom(H_j,H_i)$. If now $m$ is an element of $I_n\setminus \im g$, if $(\det^g\varphi)_{m,m}$ is invertible and if $f$ is the extension of $g$ to $I_{k+1}$ sending $k+1$ to $m$, then we set  
$$(\det\nolimits^f\!\varphi)_{i,j}=(\det\nolimits^g\varphi)_{i,j}-(\det\nolimits^g\!\varphi)_{i,m}(\det\nolimits^g\!\varphi)_{m,m}^{-1}(\det\nolimits^g\!\varphi)_{m,j}$$ for every $i,j\in I_n\setminus \im f$. If there is an element $f$ of $F_{n-1}$ such that $\det^f\!\varphi$ can be defined, then $\det^f\!\varphi$ will be called the $f$-\textit{determinant} of $\varphi$. It is easy to check that $\det^f\!\varphi$ is an endomorphism of $H_s$, where $s$ is the only integer in $I_n\setminus \im f$. In particular, if $n=2$ and $f$ is the immersion of $I_1$ into $I_2$, then the $f$-determinant of $\varphi$ is the $H_2$-determinant defined in the previous subsection; on the other hand, if $f: 1\in I_1\mapsto 2\in I_2$, then $\det^f\!\varphi=\det_{H_1}\varphi$.

From this definition, applying some straightforward calculations and proceeding by induction on $n$, it is possible to prove the following analogue to Theorem \ref{propinv}.

\begin{theorem}\label{propinvGen}
Let  $H_1,\ldots, H_n$ be groups and let $\varphi$ be an endomorphism of $H_1\times\ldots\times H_n$. Then $\varphi$ is invertible if and only if there exist an element of $F_{n-1}$ such that $\det^f\!\varphi$ can be defined and is invertible.
\end{theorem}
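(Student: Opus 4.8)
The plan is to argue by induction on $n$, taking Theorem \ref{propinv} as the base case $n=2$ and reducing the general step to it by peeling off a single direct factor; throughout I identify an endomorphism of $H_1\times\cdots\times H_n$ with its matrix in $\mathcal{M}_{H_1,\ldots,H_n}$ via Theorem \ref{endGen}. Suppose the statement holds for products of fewer than $n$ groups and let $\varphi\in\mathcal{M}_{H_1,\ldots,H_n}$. Fix $f\in F_{n-1}$ and put $m=f(1)$; after relabelling the factors (which only conjugates $\varphi$ by a permutation matrix and hence preserves both invertibility and the entire determinant construction) I may assume $m=n$. Regard $G=H_1\times\cdots\times H_n$ as the product of $\hat H=H_1\times\cdots\times H_{n-1}$ and $K=H_n$, so that $\varphi$ acquires the $2$-by-$2$ block form $\left(\begin{smallmatrix}\hat\alpha&\hat\beta\\ \hat\gamma&\delta\end{smallmatrix}\right)$, where $\delta=\varphi_{n,n}$, the block $\hat\alpha$ is the $(n-1)$-by-$(n-1)$ matrix $(\varphi_{i,j})_{i,j<n}$, and $\hat\beta,\hat\gamma$ are the remaining column and row. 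The first pivot of the $f$-determinant is exactly the requirement that $\delta=\varphi_{n,n}$ be invertible, and in that case $\det^{g}\varphi$ (for $g=f|_{I_1}$) is, entrywise, the block Schur complement $\hat\alpha-\hat\beta\delta^{-1}\hat\gamma=\det\nolimits_{\hat H}\varphi$.

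The first point to verify is that this Schur complement is again a legitimate element of $\mathcal{M}_{H_1,\ldots,H_{n-1}}$, i.e. that each entry is a homomorphism and that the prescribed images commute. This is the "straightforward calculation": the correction terms $\varphi_{i,n}\delta^{-1}\varphi_{n,j}$ factor through $\im\varphi_{i,n}$, which commutes with the images $\im\varphi_{i,j}$ ($j<n$) by the defining relations of $\mathcal{M}$, so that the coordinatewise difference coincides with the block difference and is a homomorphism — exactly the mechanism behind the homomorphism clause of Proposition \ref{invgen}. Hence $\det^g\varphi$ represents an endomorphism $\psi$ of the product $\hat H$ of $n-1$ groups. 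By construction the remaining pivots $f(2),\ldots,f(n-1)$ act only on this reduced matrix, so $\det^f\varphi=\det^{\bar f}\psi$, where $\bar f$ is $f$ restricted to $\{2,\ldots,n-1\}$ and reindexed as an injective map into $I_n\setminus\{n\}$; in particular $\det^f\varphi$ is defined and invertible precisely when $\det^{\bar f}\psi$ is.

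The two equivalences then chain together. On one side, Theorem \ref{propinv} applied to $G=\hat H\times K$ with $\delta$ invertible gives that $\varphi$ is invertible if and only if $\det_{\hat H}\varphi=\psi$ is invertible; on the other side, the inductive hypothesis applied to $\psi$ gives that $\psi$ is invertible if and only if some $\det^{\bar f}\psi$ is defined and invertible. Combining them, $\varphi$ is invertible precisely when some $\det^f\varphi$ (with $f(1)$ chosen so that the corresponding diagonal block is invertible) is defined and invertible, which is the assertion.

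I expect the main obstacle to be bookkeeping rather than algebra. One must check that the clause "$\det^f\varphi$ can be defined" correctly encodes the successive invertibility of the pivot endomorphisms at \emph{every} stage, so that each intermediate Schur complement is genuinely an endomorphism of a direct product of groups and the inductive hypothesis is applicable; and, for the forward implication, one must be able to start the reduction, i.e. locate an invertible diagonal block — the higher-rank analogue of the standing hypothesis that $\delta$ or $\alpha$ be invertible in Theorem \ref{propinv}. Keeping the index combinatorics of $F_{n-1}$ aligned with the factorisation $\det^f\varphi=\det^{\bar f}\psi$ is where care is genuinely needed.
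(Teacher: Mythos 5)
Your strategy --- induction on $n$, relabelling so that the first pivot is $\varphi_{n,n}$, identifying $\det^{g}\varphi$ with the block Schur complement $\det_{\hat H}\varphi$, and then chaining Theorem \ref{propinv} with the inductive hypothesis via the identity $\det^f\varphi=\det^{\bar f}\psi$ --- is exactly the argument the paper has in mind; the source offers only the remark that the result follows ``by induction on $n$ with some straightforward calculations'', so your write-up is more explicit than the paper's own. The verification that the entrywise Schur complement agrees with the block one (using the commuting-image relations of $\mathcal{M}_{H_1,\ldots,H_n}$ so that the pointwise difference is again a homomorphism) is pitched at the right level, and the backward implication is complete as you give it: if some $\det^f\varphi$ is defined and invertible, then the first pivot is invertible, Theorem \ref{propinv} applies to the two-block decomposition, and induction finishes.

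The difficulty you flag in your closing paragraph for the forward implication is, however, not mere bookkeeping: it is a genuine gap, and in fact an obstruction to the statement as literally written. To define \emph{any} $\det^f\varphi$ one needs some diagonal entry $\varphi_{m,m}$ to be invertible, and an invertible $\varphi$ need not have one. Already for $n=2$, if $H_1\simeq H_2\neq 1$ the coordinate swap is an automorphism with $\varphi_{1,1}=\varphi_{2,2}=0$ --- the paper itself makes this observation immediately after Lemma \ref{detiff} --- so no element of $F_1$ yields a definable determinant and the ``only if'' direction fails. The defect also propagates through the induction: the inductive hypothesis you invoke for the Schur complement $\psi$ is the full statement at level $n-1$, whose forward direction suffers from the same problem, so even a first invertible pivot does not guarantee that the recursion can be continued. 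The forward implication therefore needs an extra hypothesis that supplies invertible pivots at every stage --- for instance $\varphi\in\mathcal{A}_{H_1,\ldots,H_n}$, where all diagonal entries are automorphisms (which is what the Corollary following the theorem actually uses), or a condition such as the $H_i$ pairwise sharing no non-trivial direct factor. Your proof is correct for the backward implication, and for the forward one under such a hypothesis; but as it stands you cannot ``locate an invertible diagonal block'' in general, and neither can the theorem.
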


As one would expect, notation can be lightened for $\mathcal{A}_{H_1,\ldots, H_n}$. Indeed, an analogue to Lemma \ref{detiff} can be proved, so that one can define the \textit{determinant} of an element $\varphi$ of $\mathcal{A}_{H_1,\ldots, H_n}$ as $\det\varphi=\det^f\!\varphi$, where $f:x\in I_{n-1}\mapsto n-x+1\in I_{n}$. Clearly, $\det\varphi$ is an endomorphism of $H_1$. Then we get the following

\begin{cor}\label{CorDetGen}
Let $H_1,\ldots, H_n$ be groups and let $\varphi$ be an element of $\mathcal{A}_{H_1,\ldots, H_n}$. Then $\varphi$ is invertible if and only $\det\varphi$ is invertible.
\end{cor}

Thinking of the way the determinants are defined, also Proposition \ref{propsub} can be extended to give a useful condition under which $\mathcal{A}_{H_1,\ldots, H_n}$ is a subgroup of $\mathcal{M}_{H_1,\ldots, H_n}$. 

\begin{prop}
Let $H_1,\ldots, H_n$ be groups. Then $\mathcal{A}_{H_1,\ldots, H_n}$ is a subgroup of the automorphism group of $H_1\times\ldots\times H_n$ if and only if for every $i,j\in I_n$ such that $i\neq j$ and for every $\lambda\in\aut H_i$, $\mu\in \Hom(H_i,Z(H_j))$, $\nu\in\aut H_j$ and $\xi\in \Hom(H_j,Z(H_i))$, $\lambda+\xi\mu$ and $\nu+\mu\xi$ are bijective.
\end{prop}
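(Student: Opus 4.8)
The plan is to follow the template of Proposition \ref{propsub} (the case $n=2$) and to bootstrap to arbitrary $n$ by an induction peeling off one factor. Write $\varphi=(\varphi_{i,j})$ for an element of $\mathcal{A}_{H_1,\ldots,H_n}$, so that $\varphi_{i,i}\in\aut H_i$ and $\varphi_{i,j}\in\Hom(H_j,Z(H_i))$ for $i\neq j$. For necessity, assume $\mathcal{A}_{H_1,\ldots,H_n}$ is a subgroup and fix $i\neq j$ together with $\lambda,\mu,\nu,\xi$ as in the statement. I would take the elementary matrices $A$ and $B$ agreeing with the identity except that $A_{i,i}=\lambda$, $A_{i,j}=\xi$, respectively $B_{j,i}=\mu$; both lie in $\mathcal{A}_{H_1,\ldots,H_n}$, since their only nontrivial diagonal entry is an automorphism and their only nontrivial off-diagonal entry is a homomorphism with central image. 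Row-by-column multiplication kills all cross terms except $(AB)_{i,i}=\lambda+\xi\mu$, and since $AB$ again lies in $\mathcal{A}_{H_1,\ldots,H_n}$ its $(i,i)$-entry is an automorphism, whence $\lambda+\xi\mu$ is bijective; the symmetric choice (with $\nu,\mu$ on the $j$-row and $\xi$ on the $i$-row) yields $\nu+\mu\xi\in\aut H_j$.

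The technical core of sufficiency is the following bijectivity fact, which I would isolate as a lemma: if $\lambda\in\aut H_i$ and $S\subseteq I_n\setminus\{i\}$ is finite with $\xi_k\in\Hom(H_k,Z(H_i))$ and $\mu_k\in\Hom(H_i,Z(H_k))$ for $k\in S$, then $\lambda+\sum_{k\in S}\xi_k\mu_k\in\aut H_i$. This is proved by induction on $|S|$: each $\xi_k\mu_k$ has image in $Z(H_i)$, and the pointwise product of a homomorphism with a homomorphism of central image is again a homomorphism, so every partial sum is a homomorphism of $H_i$; by the inductive hypothesis the partial sum is bijective, hence an automorphism, and adjoining one more term $\xi_{k_0}\mu_{k_0}$ keeps it bijective by the hypothesis applied to the pair $(i,k_0)$.

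For sufficiency I would argue by induction on $n$, the base case $n=2$ being exactly Proposition \ref{propsub}. Writing $G=H_1\times K$ with $K=H_2\times\cdots\times H_n$, an element $\varphi\in\mathcal{A}_{H_1,\ldots,H_n}$ becomes a block matrix $\left(\begin{smallmatrix}\alpha&\beta\\\gamma&\delta\end{smallmatrix}\right)$ with $\alpha=\varphi_{1,1}\in\aut H_1$, $\beta\in\Hom(K,Z(H_1))$, $\gamma\in\Hom(H_1,Z(K))$ and $\delta\in\mathcal{A}_{H_2,\ldots,H_n}$; since the hypothesis restricts to the index set $\{2,\ldots,n\}$, the inductive hypothesis makes $\mathcal{A}_{H_2,\ldots,H_n}$ a subgroup, so $\delta\in\aut K$. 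Closure under products follows because, in the fine matrix, off-diagonal entries of a product are sums of compositions each having central image, hence homomorphisms into the appropriate centre, while the $(i,i)$-entry of a product has the shape $\lambda+\sum_{k\neq i}\xi_k\mu_k$ covered by the lemma, hence is an automorphism. For invertibility and closure under inverses I would note that $\det\varphi=\alpha-\beta\delta^{-1}\gamma$ is, after expanding over $k\in\{2,\ldots,n\}$ (with $\xi_k=\beta_k$ and $\mu_k=-(\delta^{-1}\gamma)_k$), of the lemma's form, so $\det\varphi\in\aut H_1$ and Lemma \ref{inv} applies; its formula gives $\varphi^{-1}$ with $(2,2)$-block $\Delta_K^{-1}$, where $\Delta_K=\delta-\gamma\alpha^{-1}\beta$. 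A direct check shows $\Delta_K\in\mathcal{A}_{H_2,\ldots,H_n}$: its off-diagonal entries are homomorphisms into centres, and its $(j,j)$-entry $\delta_{j,j}-\gamma_j\alpha^{-1}\beta_j$ is again of the lemma's form for the pair $(j,1)$, hence an automorphism. By the inductive hypothesis $\Delta_K^{-1}\in\mathcal{A}_{H_2,\ldots,H_n}$, and the remaining blocks of $\varphi^{-1}$ are visibly an automorphism of $H_1$ together with homomorphisms into the relevant centres, so $\varphi^{-1}\in\mathcal{A}_{H_1,\ldots,H_n}$.

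The main obstacle I anticipate is precisely the behaviour of the diagonal entries: unlike the $n=2$ case, a single automorphism is now perturbed by a \emph{sum} of several central homomorphisms, both when multiplying two elements and when forming the determinant of the peeled-off $2\times2$ block. The decisive point making everything go through is the auxiliary lemma, which converts such a multi-term perturbation into an iterated application of the single-pair hypothesis; its validity rests on the elementary but essential observation that the pointwise product of a homomorphism with one of central image is again a homomorphism, so that each partial sum is a genuine endomorphism of $H_i$ to which the bijectivity arguments apply.
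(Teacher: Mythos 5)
Your proof is correct, and it follows the route the paper itself intends: this proposition appears in the sketch subsection with no proof beyond the remark that Proposition \ref{propsub} ``can be extended'', and your argument --- necessity via elementary matrices exactly as in the $n=2$ case, sufficiency by induction on $n$ peeling off $H_1$ against $K=H_2\times\cdots\times H_n$ and invoking Lemma \ref{inv} for the block form --- is precisely that extension carried out in full. The one genuinely new ingredient you supply, the auxiliary lemma that an automorphism of $H_i$ perturbed by a finite sum of compositions $\xi_k\mu_k$ with central image remains bijective, is exactly what is needed to handle the diagonal entries of products and of $\det\varphi$ and $\Delta_K$, and your reduction of that lemma to iterated applications of the single-pair hypothesis is sound.
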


The usefulness of this condition is to be seen in connection with the concepts of incompatibility which are going to be defined in next section. In particular, an extended concept of incompatibility can be defined for direct products of any finite number of groups, but this goes beyond the scope of the present paper.

\section{Group pairs}\label{SecPairs}

In this section we are going to find some conditions on the groups $H$ and $K$, which will allow us to give good descriptions of $\aut(H\times K)$ and of $\mathcal{A}_{H,K}$. To do so, we define some particular pairs of groups and give some basic implementation of the concept of determinant.

\subsection{Incompatible pairs}

Let $H$ and $K$ be groups. Starting from Lemma \ref{norm}, a way to a better understanding of $\aut(H\times K)$ can be attained when the sets of normal subgroups of $H$ and $K$ are well-behaved, in a sense that will be made precise soon. In this light, the following easy lemma will be useful.

\begin{lemma}\label{min} Let $G$ be a group satisfying the minimal condition on normal subgroups and let $\varphi$ be a normal endomorphism of $G$. Then $\varphi$ is injective if and only if is bijective.
\end{lemma}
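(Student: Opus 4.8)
The plan is to prove only the non-trivial implication, that injectivity forces surjectivity, since a bijective map is of course injective. The key idea is to exploit the descending chain of images of the powers of $\varphi$, which the minimal condition will force to stabilise, and then to cancel by injectivity.

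First I would recall, exactly as noted just before the statement, that a normal endomorphism $\varphi$ of $G$ produces normal submonoids $\im\varphi^n$ of $G$; since $G$ is a group these are normal subgroups. They form a descending chain
$$\im\varphi\geq\im\varphi^2\geq\cdots\geq\im\varphi^n\geq\cdots,$$
and by the minimal condition on normal subgroups this collection admits a minimal element. As the chain is descending, this means there is a positive integer $n$ with $\im\varphi^n=\im\varphi^{n+1}$.

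Now suppose $\varphi$ is injective; then every power $\varphi^n$ is injective as well. Given an arbitrary $g\in G$, the element $\varphi^n(g)$ lies in $\im\varphi^n=\im\varphi^{n+1}$, so $\varphi^n(g)=\varphi^{n+1}(h)=\varphi^n(\varphi(h))$ for some $h\in G$. Cancelling $\varphi^n$ by injectivity gives $g=\varphi(h)\in\im\varphi$, whence $\varphi$ is surjective and therefore bijective.

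I do not expect any genuine obstacle here: the only points requiring care are that the images of the powers of $\varphi$ really are \emph{normal} subgroups (which is guaranteed by the normality of $\varphi$, as recalled above) so that the minimal condition applies to them, and that the stabilisation is being read off a \emph{descending} chain. Once the equality $\im\varphi^n=\im\varphi^{n+1}$ is in hand, the cancellation step uses nothing more than injectivity of $\varphi^n$, and the argument closes.
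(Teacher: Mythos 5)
Your proof is correct and rests on the same key object as the paper's: the descending chain $\im\varphi\geq\im\varphi^2\geq\cdots$ of normal subgroups (normal because $\varphi$ is a normal endomorphism), controlled by the minimal condition. The paper phrases it as a contradiction (injective but not surjective would force the chain to descend strictly forever), while you let the chain stabilise and cancel $\varphi^n$ by injectivity; this is only a cosmetic difference.
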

\begin{proof}
Assume for a contradiction that $\varphi$ is not surjective and let $G_1=\im\varphi$. Then $G_1$ is a proper subgroup of $G$ and it is also normal in $G$ because $\varphi$ is normal. Since $\varphi$ is injective, $\varphi(G_1)$ must be a proper subgroup of $G_1$ which is still normal in $G$ because of the normality of $\varphi$. Iterating this process leads to a contradiction. 
\end{proof}

This lemma shows that prescribing the minimal condition on normal subgroups gives us surjectivity, which is a feature we want to pursue in view of Lemma \ref{norm} (1). Moreover, again referring to Lemma \ref{norm} (1), if $\alpha$ is not surjective it is easy to construct automorphisms such that $\im\beta$ is not central: take for instance $H=K\simeq S_3$, i.e. the symmetric group on $3$ elements and the automorphism of $H\times K$ swapping $H$ and $K$. This kind of swapping displays a typical behaviour of the two groups $H$ and $K$ which we want to avoid,
and this can be achieved by giving the following definition.

For short, we will say that two groups $H$ and $K$ are \textit{incompatible} or, equivalently, that the pair $(H,K)$ is \textit{incompatible}, if for every $\sigma\in\Hom(H,K)$, $\tau\in\Hom(K,H)$, $h\in H$ and $k\in K$, such that $\sigma\tau$ and $\tau\sigma$ are normal, one of the following equivalent conditions hold:
\begin{itemize}
\item[(1)] If $\sigma\tau(k)=k$, then $k=1$;
\item[(2)] If $\tau\sigma(h)=h$, then $h=1$.
\end{itemize}
Indeed, if $\sigma\tau$ fixes a non-trivial element $k$, this means that $\tau(k)$, which cannot be trivial, is fixed by $\tau\sigma$. And vice versa. Two groups which are not incompatible will be said to be \textit{compatible}. In full, $H$ and $K$ are compatible if there exist $\sigma\in\Hom(H,K)$, $\tau\in\Hom(K,H)$, $h\in H$ and $k\in K$ such that $\sigma\tau$ and $\tau\sigma$ are normal and such that either $h$ is non-trivial and $\tau\sigma(h)=h$ or $k$ is non-trivial and $\sigma\tau(k)=k$. Moreover, $H$ and $K$ (or the pair $(H,K)$) will be said to be \textit{centrally incompatible} if $H$ and $K$ satisfy the same conditions for incompatibility, but with $\Hom(H,Z(K))$ and $\Hom(K,Z(H))$ in place of $\Hom(H,K)$ and $\Hom(K,H)$, respectively. Two groups which are not centrally incompatible will be said to be \textit{centrally compatible}.

We are now showing how incompatibility plays a role in determining the automorphism group of a direct product of two groups.

\begin{theorem}\label{incompatible}
Let $H$ and $K$ be incompatible groups. If $H$ or $K$ satisfies the minimal condition on normal subgroups, then $\mathcal{A}_{H,K}$ is a subgroup of $\aut (H\times K)$. If both $H$ and $K$ satisfy the minimal condition on normal subgroups, then $\aut (H\times K)\simeq\mathcal{A}_{H,K}$.
\end{theorem}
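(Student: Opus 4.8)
The plan is to prove the two assertions separately: the first through the criterion of Proposition~\ref{propsub}, the second through Lemma~\ref{norm} and the relations of Lemma~\ref{cases}, with incompatibility and the minimal condition combining in the same way in both. For the first assertion, assume (say) that $K$ satisfies the minimal condition on normal subgroups. By Proposition~\ref{propsub} it suffices to show that $\lambda+\xi\mu$ and $\nu+\mu\xi$ are bijective for all $\lambda\in\aut H$, $\mu\in\Hom(H,Z(K))$, $\nu\in\aut K$, $\xi\in\Hom(K,Z(H))$. Writing $\lambda+\xi\mu=\lambda(1+\lambda^{-1}\xi\mu)$ and $\nu+\mu\xi=\nu(1+\nu^{-1}\mu\xi)$, and noting $\lambda^{-1}\xi\in\Hom(K,Z(H))$ and $\nu^{-1}\mu\in\Hom(H,Z(K))$, the task reduces to proving that $1+\xi\mu$ (on $H$) and $1+\mu\xi$ (on $K$) are bijective for all such $\mu,\xi$.

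I would first settle $1+\mu\xi$ on $K$. Since $\im(\mu\xi)\leq Z(K)$, this is a normal endomorphism of $K$, so by Lemma~\ref{min} it is enough to prove injectivity. If $(1+\mu\xi)(k)=1$ then $k=(\mu\xi(k))^{-1}\in Z(K)$ and $\mu\xi(k)=k^{-1}$, whence $(\mu\xi)^2(k)=\mu\xi(k^{-1})=k$. Taking $\sigma=\mu\in\Hom(H,Z(K))$ and $\tau=\xi\mu\xi\in\Hom(K,Z(H))$ we get $\sigma\tau=(\mu\xi)^2$ and $\tau\sigma=(\xi\mu)^2$, both normal since their images are central; as $\sigma\tau(k)=k$, incompatibility forces $k=1$, so $1+\mu\xi$ is bijective. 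To transfer this to $H$ I would use the determinant: the endomorphism
\[
\psi=\left(\begin{array}{cc}1 &\xi \\ -\mu & 1\end{array}\right)
\]
of $H\times K$ satisfies $\det_K\psi=1+\mu\xi$ and $\det_H\psi=1+\xi\mu$, with both diagonal entries invertible. Since $\alpha=1$ is invertible and $\det_K\psi$ is invertible, Theorem~\ref{propinv} gives that $\psi$ is invertible; since moreover $\delta=1$ is invertible, the same theorem yields that $\det_H\psi=1+\xi\mu$ is invertible. This produces both required bijectivities, and the case where $H$ satisfies the minimal condition is symmetric.

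For the second assertion, with both $H$ and $K$ satisfying the minimal condition, the first part already gives $\mathcal{A}_{H,K}\leq\aut(H\times K)$, so it remains to show $\aut(H\times K)\subseteq\mathcal{A}_{H,K}$. Let $\varphi=\left(\begin{smallmatrix}\alpha &\beta \\ \gamma & \delta\end{smallmatrix}\right)\in\aut(H\times K)$; the crux is injectivity of $\alpha$ (the case of $\delta$ being symmetric). For $h\in\ke\alpha$, relation (2.1) of Lemma~\ref{cases} reads $h=\alpha'\alpha(h)\,\beta'\gamma(h)$, and $\alpha(h)=1$ gives $\beta'\gamma(h)=h$. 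By Lemma~\ref{norm}, $\beta'\gamma$ and $\gamma\beta'$ are normal endomorphisms of $H$ and $K$, so incompatibility applied with $\sigma=\gamma$, $\tau=\beta'$ forces $h=1$. Thus $\alpha$ is an injective normal endomorphism of $H$, hence bijective by Lemma~\ref{min}, and likewise $\delta$ is bijective. Lemma~\ref{norm}(1) then gives $\im\beta\leq Z(H)$ and $\im\gamma\leq Z(K)$, so $\varphi\in\mathcal{A}_{H,K}$; therefore $\aut(H\times K)=\mathcal{A}_{H,K}$.

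The main obstacle I anticipate is in the first part: the minimal condition on normal subgroups only delivers bijectivity on one of the two factors, so the genuinely new ingredient is the determinant trick via Theorem~\ref{propinv}, which propagates invertibility from $1+\mu\xi$ on $K$ to $1+\xi\mu$ on $H$. A smaller but essential subtlety is the squaring step, needed to reshape the relation $\mu\xi(k)=k^{-1}$ into a genuine fixed point $(\mu\xi)^2(k)=k$ so that the hypothesis of incompatibility can be invoked.
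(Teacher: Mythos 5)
Your proof is correct. The second half (showing $\aut(H\times K)\subseteq\mathcal{A}_{H,K}$ when both groups satisfy the minimal condition on normal subgroups) coincides with the paper's argument: injectivity of $\alpha$ and $\delta$ via relations (2.1), (2.4) and the normality of $\beta'\gamma$, $\gamma'\beta$, then Lemma \ref{min} for surjectivity and Lemma \ref{norm}(1) for centrality of $\im\beta$ and $\im\gamma$.

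The first half is where you genuinely diverge, and precisely at the point you identify as the obstacle. The paper (assuming, say, that $H$ satisfies the minimal condition) proves surjectivity of $\nu-\mu\xi$ on the \emph{other} factor $K$ by hand: it observes that $\xi(k)$ has finite order $m$, that the subgroup $M$ of $Z(H)$ generated by elements of order at most $m$ is a finite fully invariant subgroup, that incompatibility forces $(\mu\xi\nu^{-1})^n(M)=1$ for some $n$ (otherwise a nontrivial fixed point would appear), and then exhibits an explicit preimage $s=\nu^{-1}+\theta\nu^{-1}+\cdots+\theta^{n-1}\nu^{-1}(k)$ as a truncated geometric series. You instead prove injectivity on both factors via incompatibility (your squaring step, turning $\mu\xi(k)=k^{-1}$ into the fixed point $(\mu\xi)^2(k)=k$, plays the same role as the paper's sign manipulation), obtain bijectivity on the factor with the minimal condition from Lemma \ref{min}, and then \emph{transfer} invertibility to the other factor by packaging $\xi$ and $-\mu$ into the endomorphism $\psi$ and applying both directions of Theorem \ref{propinv} to its two determinants. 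This is shorter, avoids the torsion analysis of $Z(H)$ and the explicit series entirely, and is a nice illustration of the determinant machinery the paper is promoting; the price is that it gives no explicit preimage formula, which the paper's construction does. Both arguments are sound and there is no circularity, since Theorem \ref{propinv} is established before this theorem and independently of it.
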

\begin{proof}
Assume without loss of generality that $H$ satisfies the minimal condition on normal subgroups and let $\lambda\in\aut H$, $\mu\in \Hom(H,Z(K))$, $\nu\in\aut K$ and $\xi\in \Hom(K,Z(H))$. If $h$ is an element of $H$ such that $\lambda-\xi\mu(h)=1$, so that $\lambda^{-1}\xi\mu(h)=h$. However, $\lambda^{-1}\xi\mu$ is a normal endomorphism of $H$, because its image is contained in $Z(H)$, and hence $h=1$ by the incompatibility hypothesis. Then $\lambda-\mu\xi$ is injective. Applying exactly the same argument, we have also that $\nu-\mu\xi$ is injective. Moreover, by the hypothesis on $H$ and by Lemma \ref{min}, $\lambda-\xi\mu$ is bijective. Let now $k$ be an element of $K$, let $m$ be the order of $\xi(k)$ and put $\theta=\nu^{-1}\mu\xi$, which is a normal endomorphism of $K$. As $H$ satisfies the minimal condition on normal subgroups, the subgroup $M$ generated by the elements of $Z(H)$ of order at most $m$ is finite. Since $M$ is clearly also fully invariant, by incompatibility there is a non-negative integer $n$ such that $(\mu\xi\nu^{-1})^n(M)=1$, otherwise there would be a non-trivial element $z$ of $M$ and a positive integer $l$ such that $(\mu\xi(\theta^{l-1}\nu^{-1}))(z)=(\mu\xi\nu^{-1})^l(z)=z$, contradicting the fact that $H$ and $K$ are incompatible. Then, it is not difficult to see that $s=\nu^{-1}+\theta\nu^{-1}+\cdots+\theta^{n-1}\nu^{-1}(k)$ is a solution to the equation $\nu-\mu\xi(x)=k$, so that $\nu-\mu\xi$ is also surjective. Now, from the generality of $\lambda,\mu,\nu$ and $\xi$ and from Proposition \ref{propsub} (the sign does not count, as $\im\mu$ and $\im\xi$ are subgroups of $Z(K)$ and of $Z(H)$, respectively), we have that $\mathcal{A}_{H,K}$ is a subgroup of $\aut (H\times K)$.

\smallskip

Suppose now that both $H$ and $K$ satisfy the minimal condition on normal subgroups. Let $\left(\begin{array}{cc}\alpha &\beta \\ \gamma & \delta\end{array}\right)$ be an automorphism of $H\times K$, whose inverse will be as usual denoted with $\left(\begin{array}{cc}\alpha' &\beta' \\ \gamma' & \delta'\end{array}\right)$. First, we prove that $\alpha$ and $\delta$ are injective. Let hence $h$ be an element of $H$ such that $\alpha(h)=1$. By Lemma \ref{cases} (2.1), we may write $h=\alpha'\alpha(h)\beta'\gamma(h)=\beta'\gamma(h)$. Using Lemma \ref{norm} (2), we see that $\beta'\gamma$ is a normal endomorphism of $H$ and thus we can use the incompatibility of $H$ and $K$ to infer that $h=1$. Making use of Lemma \ref{cases} (2.4) and of Lemma \ref{norm} (3), we have that also $\delta$ is injective. Now by hypothesis, Lemma \ref{norm} and Lemma \ref{min}, $\alpha$ and $\delta$ are also surjective, so that from the relations in $\mathcal{M}_{H,K}$ we obtain that $\im\beta\subseteq Z(H)$ and $\im\gamma\subseteq Z(K)$. We have shown that $\aut (H\times K)\subseteq\mathcal{A}_{H,K}$ and hence the thesis follows.
\end{proof}

A different version of this theorem is possible for centrally incompatible groups.

\begin{prop}\label{PropCInc} Let $H$ and $K$ be groups. If $H$ or $K$ satisfies the minimal condition on normal subgroups, then $H$ and $K$ are centrally incompatible if and only if $\mathcal{A}_{H,K}$ is a subgroup of $\aut (H\times K)$.
\end{prop}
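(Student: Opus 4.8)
The plan is to route both implications through Proposition \ref{propsub}, which characterises the subgroup property as the bijectivity of every map $\lambda+\xi\mu$ and $\nu+\mu\xi$ with $\lambda\in\aut H$, $\mu\in\Hom(H,Z(K))$, $\nu\in\aut K$ and $\xi\in\Hom(K,Z(H))$.

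I would settle the necessity of central incompatibility first, by contraposition, and note that this implication uses no minimal condition at all. If $H$ and $K$ are centrally compatible, then there are $\sigma\in\Hom(H,Z(K))$ and $\tau\in\Hom(K,Z(H))$ together with, say, a non-trivial $h\in H$ with $\tau\sigma(h)=h$. Taking $\lambda=1$, $\mu=\sigma$ and $\xi=-\tau$ (which lies in $\Hom(K,Z(H))$ because $Z(H)$ is abelian), the centrality of $\im\mu$ and $\im\xi$ gives $(\lambda+\xi\mu)(h)=h\cdot(\tau\sigma(h))^{-1}=h\cdot h^{-1}=1$ with $h\neq1$. Thus $\lambda+\xi\mu$ is not injective, and Proposition \ref{propsub} shows that $\mathcal{A}_{H,K}$ is not a subgroup of $\aut(H\times K)$.

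For sufficiency I would observe that the first half of the proof of Theorem \ref{incompatible} really only invokes central incompatibility: the endomorphisms whose fixed points are annihilated there, $\lambda^{-1}\xi\mu$ on $H$ and $\nu^{-1}\mu\xi$ on $K$, are assembled from homomorphisms into $Z(H)$ and $Z(K)$ and so have central image, whence the vanishing of their fixed points is precisely an instance of the centrally incompatible hypothesis. That argument therefore transfers unchanged: central incompatibility gives injectivity of $\lambda+\xi\mu$ and $\nu+\mu\xi$, the minimal condition together with Lemma \ref{min} upgrades to bijectivity the one living on the factor that satisfies it, and the terminating sum $s=\nu^{-1}+\theta\nu^{-1}+\cdots+\theta^{n-1}\nu^{-1}(k)$ (with $\theta=\nu^{-1}\mu\xi$) furnishes a preimage establishing surjectivity on the other factor. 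Proposition \ref{propsub} then yields that $\mathcal{A}_{H,K}$ is a subgroup.

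I expect the only delicate point to be this last surjectivity step on the factor not assumed to satisfy the minimal condition, where one must know that iterating $\theta$ on the relevant finite subgroup of the centre terminates; but this is exactly the mechanism already executed in Theorem \ref{incompatible}, and central incompatibility supplies precisely the fixed-point-freeness it consumes, so no genuinely new obstacle appears.
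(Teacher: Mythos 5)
Your proposal is correct and follows essentially the same route as the paper: the sufficiency direction is, as you say, exactly the first half of the proof of Theorem \ref{incompatible} (which indeed only consumes central incompatibility), and your contrapositive argument for necessity via Proposition \ref{propsub} is just a repackaging of the paper's direct exhibition of the non-trivial kernel element $(h,\sigma(h))$ of the matrix $\left(\begin{smallmatrix}1 & -\tau\\ -\sigma & 1\end{smallmatrix}\right)$. Your observation that the necessity direction needs no minimal condition is accurate and consistent with the paper.
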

\begin{proof}
We can assume without loss of generality that $H$ satisfies the minimal condition on normal subgroups. The necessity condition runs exactly the same way as the first part of the proof of Theorem \ref{incompatible}. Conversely, assume that $H$ and $K$ are centrally compatible, viz. that there exist $\sigma\in\Hom(H,Z(K))$, $\tau\in\Hom(K,Z(H))$ and, without loss of generality, $h\in H\setminus\{1\}$ such that $\tau\sigma(h)=h$. If we let $k=\sigma(h)$, we get in particular that $h$ is central in $H$ and $k$ in $K$. If we now define $\beta=-\tau$ and $\gamma=-\sigma$ and $\varphi=\left(\begin{array}{cc}1 &\beta \\ \gamma & 1\end{array}\right)$ we immediately see that the non-trivial element $(h,k)$ of $H\times K$ belongs to $\ke\varphi$, so that $\varphi$ is an element of $\mathcal{A}_{H,K}$ which is not an automorphism of $H\times K$.
\end{proof}

The hypothesis on the minimal condition on normal subgroups in Theorem \ref{incompatible} cannot be removed, as Examples \ref{Ex3} and \ref{Ex4} show. On the other hand, very easy examples, such as the direct product of any non-trivial group by itself, show that also the incompatibility hypothesis is essential. One of our next targets will be that of  make incompatibility simpler, possibly at the expenses of adding some other hypotheses. Next lemma provides a framework to work within.

\begin{lemma}\label{LemComm}
Let $H$ and $K$ be groups which have some common non-trivial direct factor. Then $H$ and $K$ are compatible. Moreover, $\aut(H\times K)$ is not contained in $\mathcal{A}_{H,K}$.
\end{lemma}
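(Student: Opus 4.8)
The plan is to extract, from the common direct factor, explicit witnesses for both assertions. Write $H=D\times H_0$ and $K=D\times K_0$ with $D\neq 1$ (replacing $H$ and $K$ by isomorphic copies if necessary, which affects neither compatibility nor the position of $\aut(H\times K)$ relative to $\mathcal{A}_{H,K}$). Denote by $p_H\colon H\to D$ and $p_K\colon K\to D$ the projections onto the common factor, and by $j_H\colon D\to H$, $j_K\colon D\to K$ the corresponding inclusions.

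For compatibility, I would set $\sigma=j_K\,p_H\in\Hom(H,K)$ and $\tau=j_H\,p_K\in\Hom(K,H)$. A direct computation shows that $\tau\sigma$ is exactly the projection of $H$ onto its direct factor $D$ (followed by the inclusion back into $H$), and likewise $\sigma\tau$ is the projection of $K$ onto $D$. The key point to verify is that such a projection onto a direct factor is a \emph{normal} endomorphism: for $G=D\times G_0$ and $p\colon (d,g_0)\mapsto(d,1)$ one checks $p(y^x)=\big(p(y)\big)^x$ for all $x,y\in G$ by computing both sides componentwise, since conjugation respects the direct decomposition. Once $\sigma\tau$ and $\tau\sigma$ are known to be normal, I pick any non-trivial $d\in D$ and put $h=j_H(d)\in H$; then $\tau\sigma(h)=h$ with $h\neq 1$, which is precisely a witness to compatibility (it is the negation of condition (2) in the definition). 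Hence $H$ and $K$ are compatible.

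For the second assertion, I would exhibit an explicit automorphism of $H\times K$ lying outside $\mathcal{A}_{H,K}$. Regarding $H\times K\cong D\times H_0\times D\times K_0$, let $\psi$ be the map that interchanges the two copies of $D$ while fixing $H_0$ and $K_0$ pointwise. This is an automorphism because the two copies of $D$ sit inside the commuting factors $H$ and $K$, so exchanging them is a genuine homomorphism with an obvious two-sided inverse. Computing the top-left entry of $\psi$ as an element of $\mathcal{M}_{H,K}$ via $\alpha=\pi_H\psi\iota_H$, one finds that $\alpha$ is the composite $H\to H_0\hookrightarrow H$, whose kernel contains $j_H(D)\neq 1$. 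Thus $\alpha$ is not injective, so $\alpha\notin\aut H$ and therefore $\psi\notin\mathcal{A}_{H,K}$, whereas $\psi\in\aut(H\times K)$. This shows $\aut(H\times K)\not\subseteq\mathcal{A}_{H,K}$.

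The computations are routine, and the only genuine point requiring care is the verification that the projection onto a direct factor is a normal endomorphism: normality of $\sigma\tau$ and $\tau\sigma$ is a hypothesis built into the definition of (in)compatibility, so it must be established before the fixed-point witness can be invoked. Everything else reduces to bookkeeping with the direct decomposition.
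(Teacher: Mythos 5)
Your proposal is correct and follows essentially the same route as the paper: the paper also takes $\sigma=\iota_Y\varphi\pi_X$ and $\tau=\iota_X\varphi^{-1}\pi_Y$ (your $j_K p_H$ and $j_H p_K$ after identifying the common factor), notes that $\tau\sigma$ and $\sigma\tau$ are the projections onto the common direct factor, and exhibits the same swap automorphism $\left(\begin{smallmatrix}\iota_M\pi_M &\iota_Y\varphi\pi_X \\ \iota_X\varphi^{-1}\pi_Y & \iota_N\pi_N\end{smallmatrix}\right)$ whose top-left entry is not bijective. Your explicit verification that projection onto a direct factor is a normal endomorphism is a point the paper leaves as "clear," but it is the same argument.
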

\begin{proof}
Let $X$ and $Y$ be non-trivial direct factors of $H$ and $K$, respectively, such that $\varphi$ is an isomorphism from $X$ to $Y$. Write $H=X\times M$ and $K=Y\times N$, let $\pi_X$, $\pi_M$, $\pi_Y$, $\pi_N$ be the natural projections of $H$ onto $X$ and $M$ and of $K$ onto $Y$ and $N$, respectively, and let $\iota_X$, $\iota_M$, $\iota_Y$, $\iota_N$ be the immersion maps of $X$ and $M$ in $H$ and of $Y$ and $N$ in $K$, respectively. Then it is clear that $\iota_Y\varphi\pi_X$ and $\iota_X\varphi^{-1}\pi_Y$ provide compatibility for $H$ and $K$ for any non-trivial element of $X$ or $Y$. Moreover, the endomorphism $$\left(\begin{array}{cc}\iota_M\pi_M &\iota_Y\varphi\pi_X \\ \iota_X\varphi^{-1}\pi_Y & \iota_N\pi_N\end{array}\right)$$ is easily seen to be an automorphism of $H\times K$ which does not belong to $\mathcal{A}_{H,K}$, because, for instance, $\iota_M\pi_X$ is not bijective.
\end{proof}

This suggests to investigate situations in which the implication can be inverted. In fact, we will show in subsection \ref{subsmaxminn} that incompatibility is equivalent to having no isomorphic non-trivial direct factors, at least in the universe of groups satisfy\-ing the maximal and the minimal condition on normal subgroups.

\bigskip

\subsection{Totally incompatible pairs}

Let $H$ and $K$ be groups. Previous results exhibited some cases in which $\mathcal{A}_{H,K}$ is a subgroup of $\aut(H\times K)$ and the fact was strongly related to the normal structure of at least one between $H$ and $K$. In fact, the property of being incompatible is quite general and without some restriction on the normal structure of either $H$ or $K$, not much more can be said (see Example \ref{Ex1}). In order to investigate the possibility for $\mathcal{A}_{H,K}$ to be a subgroup of $\aut(H\times K)$ in a more general setting, we introduce some definitions, which will put to work Proposition \ref{propsub}.

We will say that two groups $H$ and $K$ are \textit{totally incompatible} or, equivalently, that the pair $(H,K)$ is \textit{totally incompatible}, if for every $\sigma\in\Hom(H,K)$, $\tau\in\Hom(K,H)$, $h\in H$ and $k\in K$, when $\sigma\tau$ and $\tau\sigma$ are normal there exists a positive integer $n$ (depending on $\sigma$, $\tau$, $h$ and $k$) such that one of the following equivalent conditions hold:
\begin{itemize}
\item[(1)] $(\sigma\tau)^n(k)=1$;
\item[(2)] $(\tau\sigma)^n(h)=1$.
\end{itemize}
Indeed, it is immediate to see that if $(\sigma\tau)^n(k)=1$ for some $n\geq1$ and $k\in K$, then $(\tau\sigma)^{n+1}(k)=1$. In particular, a totally incompatible pair is also incompatible. Thanks to this definition, we may dismiss some hypotheses on the normal structure of the groups involved.

\begin{prop}\label{htotincompatible}
Let $H$ and $K$ be totally incompatible groups. Then $\aut (H\times K)\simeq\mathcal{A}_{H,K}$.
\end{prop}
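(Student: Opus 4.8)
The plan is to establish the two inclusions $\mathcal{A}_{H,K}\subseteq\aut(H\times K)$ and $\aut(H\times K)\subseteq\mathcal{A}_{H,K}$. Since, by the standing convention, both sides are subsets of $\mathcal{M}_{H,K}$, their coincidence immediately gives $\aut(H\times K)=\mathcal{A}_{H,K}$ and hence the desired isomorphism. The conceptual point is that total incompatibility plays here the role that the minimal condition on normal subgroups played in Theorem~\ref{incompatible}: whenever $\sigma\in\Hom(H,K)$ and $\tau\in\Hom(K,H)$ make $\sigma\tau$ and $\tau\sigma$ normal, it tells us that these composites are \emph{locally nilpotent}, i.e. every element is killed by some finite power. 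Since a totally incompatible pair is in particular incompatible, all the injectivity arguments of Theorem~\ref{incompatible} remain at my disposal.

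For the inclusion $\mathcal{A}_{H,K}\subseteq\aut(H\times K)$ I would apply Proposition~\ref{propsub} and verify that $\lambda+\xi\mu$ and $\nu+\mu\xi$ are bijective for all $\lambda\in\aut H$, $\mu\in\Hom(H,Z(K))$, $\nu\in\aut K$, $\xi\in\Hom(K,Z(H))$. Injectivity is the incompatibility argument: from $(\nu-\mu\xi)(k)=1$ one gets $\theta(k)=k$ for the normal endomorphism $\theta=\nu^{-1}\mu\xi$ of $K$, whence $k=1$. For surjectivity, given $k\in K$ I would set $a=\nu^{-1}(k)$, choose by total incompatibility an $n$ with $\theta^{n}(a)=1$, and put $s=a\,\theta(a)\cdots\theta^{n-1}(a)$. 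Using $\im\mu\leq Z(K)$, that $\mu\xi$ is a homomorphism, and the identity $\nu\,\theta^{n}=\mu\xi\,\theta^{n-1}$, a short computation gives $(\nu-\mu\xi)(s)=k\cdot(\nu\theta^{n}(a))^{-1}=k$. The map $\lambda+\xi\mu$ is treated symmetrically, so Proposition~\ref{propsub} shows $\mathcal{A}_{H,K}$ is a subgroup of $\aut(H\times K)$.

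For the reverse inclusion I would take an automorphism $\varphi$ of $H\times K$, written with the usual components $\alpha,\beta,\gamma,\delta$ and $\alpha',\beta',\gamma',\delta'$ for $\varphi^{-1}$, and prove $\alpha\in\aut H$ and $\delta\in\aut K$; once $\alpha$ and $\delta$ are surjective, Lemma~\ref{norm}(1) yields $\im\beta\leq Z(H)$ and $\im\gamma\leq Z(K)$, so that $\varphi\in\mathcal{A}_{H,K}$. For injectivity of $\alpha$, if $\alpha(h)=1$ then Lemma~\ref{cases}(2.1) gives $h=\beta'\gamma(h)$; as $\beta'\gamma$ is a normal endomorphism of $H$ by Lemma~\ref{norm}, incompatibility forces $h=1$ (and $\delta$ is injective by the symmetric argument). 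For surjectivity of $\alpha$, Lemma~\ref{cases}(1.1) reads $\alpha\alpha'(h)=h\,(\beta\gamma'(h))^{-1}$, and $\beta\gamma'$ is normal by Lemma~\ref{norm}, hence locally nilpotent. I would then argue by induction on the least $n$ with $(\beta\gamma')^{n}(h)=1$ that $h\in\im\alpha$: setting $w=\beta\gamma'(h)$ one has $(\beta\gamma')^{n-1}(w)=1$, so $w=\alpha(y)$ by induction, and then $h=\alpha\alpha'(h)\,w=\alpha\bigl(\alpha'(h)\,y\bigr)$ because $\alpha$ is a homomorphism. Thus $\alpha$ is onto, $\delta$ is onto by symmetry, and the inclusion follows.

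The main obstacle is exactly the surjectivity step in both inclusions: lacking any chain condition, I cannot invoke Lemma~\ref{min} to pass from injectivity to bijectivity, as was done in Theorem~\ref{incompatible}. Total incompatibility is precisely the hypothesis that repairs this, furnishing for each element a finite exponent that annihilates it and thereby making the telescoping product $s=a\,\theta(a)\cdots\theta^{n-1}(a)$ (respectively the inductive reduction of a preimage of $h$ to a preimage of $\beta\gamma'(h)$) terminate. The remaining effort is bookkeeping—checking that the composites in play are the normal, centrally valued maps governed by incompatibility and Lemma~\ref{norm}, and that the constructed elements are genuine preimages—which is routine but constitutes the technical core.
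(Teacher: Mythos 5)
Your proof is correct, and the first inclusion ($\mathcal{A}_{H,K}\subseteq\aut(H\times K)$) is essentially the paper's own argument: injectivity from incompatibility applied to the normal, centrally-valued composite, and surjectivity from the telescoping ``geometric series'' $s=a\,\theta(a)\cdots\theta^{n-1}(a)$ that total incompatibility truncates. (Your non-alternating product is the correct counterpart of the paper's alternating one, since you work with $\nu-\mu\xi$ rather than $\nu+\mu\xi$; as the paper itself remarks, the sign is immaterial because the images are central.) Where you genuinely diverge is the surjectivity of $\alpha$ in the reverse inclusion. The paper first proves that $\im(\beta'\gamma)$ is central in $H$ via a commutator computation exploiting the injectivity of $1-\beta'\gamma=\alpha'\alpha$, precisely so that $-\beta'\gamma$ becomes a homomorphism and the first-paragraph telescoping argument can be reapplied to conclude that $\alpha\alpha'$ (and hence $\alpha$) is surjective. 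Your induction on the least $n$ with $(\beta\gamma')^{n}(h)=1$, using only the identity $h=\alpha\alpha'(h)\,\beta\gamma'(h)$ from Lemma~\ref{cases}~(1.1) and the fact that $\alpha$ is a homomorphism, reaches the same conclusion without ever needing the centrality of $\im(\beta\gamma')$; it is more elementary and shortens the proof. What the paper's detour buys in exchange is the explicit intermediate fact that $\im(\beta'\gamma)$ is central, which is then no longer needed once $\alpha$ and $\delta$ are known to be surjective (Lemma~\ref{norm}~(1) recovers the centrality of $\im\beta$ and $\im\gamma$ either way). Both arguments are sound.
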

\begin{proof}
We begin proving that $\mathcal{A}_{H,K}\subseteq\aut (H\times K)$.  Let $\lambda$ be an automorphism of $H$ and let $\mu$ be an endomorphism of $H$ which can be decomposed as the composition of an homomorphism in $\Hom(H,Z(K))$ and one in $\Hom(K,Z(H))$, so that in particular $\im\mu$ lies in $Z(H)$. We want to show that $\lambda+\mu$ is bijective. Put $\theta=\lambda^{-1}\mu$ and let first $h$ be an element of $H$ such that $\lambda+\mu(h)=1$. This amounts to say that $h=\theta(h^{-1})$. Then $h=\theta^2(h)$ and hence by the incompatibility hypothesis $h=1$. So $\lambda+\mu$ is injective. Let now $h$ be any element of $H$ and let $n$ be the least positive integer such that $\theta^n\lambda^{-1}(h)=1$ and let $$s=\lambda^{-1}(h)\theta\lambda^{-1}(h^{-1})\cdots\theta^{n-1}\lambda^{-1}(h^{(-1)^{n-1}}).$$ As it is easy to see that $\lambda+\mu(s)=h$, we have that $\lambda+\mu$ is also surjective. Since the same also holds for $K$, $\mathcal{A}_{H,K}$ is a subgroup of $\aut (H\times K)$ by Proposition \ref{propsub}.

Conversely, let $\varphi=\left(\begin{array}{cc}\alpha &\beta \\ \gamma & \delta\end{array}\right)$ be an automorphism of $H\times K$, whose inverse is denoted with $\left(\begin{array}{cc}\alpha' &\beta' \\ \gamma' & \delta'\end{array}\right)$. Put $\theta=\beta'\gamma$. Let first $h$ be an element of $H$ such that $\alpha(h)=1$. By Lemma \ref{cases} (2.1), we may write $h=\alpha'\alpha(h)\theta(h)=\theta(h)$. Using Lemma \ref{norm} (2), wee see that $\theta$ is a normal endomorphism of $H$ and thus we can use the total incompatibility of $H$ and $K$ to get $h=1$. Then $\alpha$ is injective and, symmetrically, so are $\alpha'$, $\delta$ and $\delta'$. In particular, $\alpha'\alpha$ is injective. Let $a$ and $b$ be elements of $H$. Since $1-\theta=\alpha'\alpha$ is a homomorphism, we have that $$a^{-1}b\theta(b^{-1})\theta(a)=a^{-1}\theta(a)b\theta(b^{-1}),$$ from which we get $b\theta(b^{-1})=(b\theta(b^{-1}))^{\theta(a)}=b^{\theta(a)}\theta(b^{-\theta(a)})$, using the fact that $\theta$ is a normal endomorphism. In other words, $1-\theta(b)=1-\theta(b^{\theta(a)})$. However, $1-\theta$ is injective, so that $b=b^{\theta(a)}$, which, by the generality of $a$ and $b$, yields that $\im\theta$ is central in $H$. In particular, also $-\theta$ is an endomorphism of $H$. Now we may employ the same proof in the first paragraph to get that $1-\theta$ is bijective and this implies that $\alpha$ is surjective. In the same way, one shows that also $\delta$ is bijective and from Lemma \ref{norm} (1) we finally obtain that $\varphi$ is an element of 
$\mathcal{A}_{H,K}$. We have shown that $\aut (H\times K)\simeq\mathcal{A}_{H,K}$.
\end{proof}

Although the class of totally incompatible pairs is a proper subclass of that of incompatible pairs (see Example \ref{ExTotInc}, Example \ref{ExTotIncPer} and Example \ref{Ex5}), it is still more general than the class of incompatible groups satisfying both the maximal and the minimal conditions on normal subgroups. Indeed, it is not difficult to show that incompatible groups satisfying the maximal and the minimal conditions on normal subgroups are totally incompatible and that the first is a proper subclass of the second one. For instance, the pair $(H,K)$, where $H$ is the additive group of rational numbers and $K$ has order $2$, is an example of a totally incompatible group pair in which not both $H$ and $K$ satisfy the maximal and the minimal conditions on normal subgroups.

\section{Some Applications}\label{SecAppl}

In this section we inspect some group theoretical applications of the results proved so far. Taking inspiration by Lemma \ref{LemComm}, we start looking for conditions for which having no common non-trivial direct factors can tell something about incompatibility. This will soon be shown to be the case of groups satisfying the maximal and the minimal conditions on normal subgroups.

\subsection{Groups satisfying Max-n and Min-n}\label{subsmaxminn}

We first need the following result which is due in its essence to Hans Fitting (see, for instance, \cite[3.3.4]{RobCour}).

\begin{lemma}\label{Fitting}
Let $G$ be a group satisfying the maximal and the minimal conditions on normal subgroups and let $\varphi$ be a normal endomorphism of $G$. Then there exists a positive integer $r$ such that $\im\varphi^r=\im\varphi^{r+1}=\cdots$, $\ke\varphi^r=\ke\varphi^{r+1}=\cdots$. Moreover, $G=(\im\varphi^r)\times(\ke\varphi^r)$.
\end{lemma}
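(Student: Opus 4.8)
The plan is to run the classical Fitting argument in three stages: stabilisation of the image chain (using Min-n), stabilisation of the kernel chain (using Max-n), and then the internal direct decomposition. First I would record that since $\varphi$ commutes with every inner automorphism of $G$, so does each power $\varphi^n$; hence every $\varphi^n$ is again a normal endomorphism, and consequently both $\im\varphi^n$ and $\ke\varphi^n$ are normal subgroups of $G$. (For the image this is already noted in the preliminaries; for the kernel, if $x\in\ke\varphi^n$ and $g\in G$ then $\varphi^n(x^g)=\varphi^n(x)^g=1$, so $x^g\in\ke\varphi^n$.) The images form a descending chain $\im\varphi\geq\im\varphi^2\geq\cdots$ and the kernels an ascending chain $\ke\varphi\leq\ke\varphi^2\leq\cdots$ of normal subgroups. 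Applying the minimal condition to the first chain and the maximal condition to the second yields indices $r_1$ and $r_2$ at which the respective chains become constant; taking $r=\max(r_1,r_2)$ gives simultaneously $\im\varphi^r=\im\varphi^{r+1}=\cdots$ and $\ke\varphi^r=\ke\varphi^{r+1}=\cdots$, which is the first assertion.

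For the decomposition, set $\psi=\varphi^r$, so that in particular $\im\psi=\im\psi^2$ and $\ke\psi=\ke\psi^2$. I would first show $\im\psi\cap\ke\psi=1$: if $x=\psi(y)$ with $\psi(x)=1$, then $\psi^2(y)=\psi(x)=1$, so $y\in\ke\psi^2=\ke\psi$ and therefore $x=\psi(y)=1$. Next I would show $G=\im\psi\cdot\ke\psi$: given $g\in G$, the element $\psi(g)$ lies in $\im\psi=\im\psi^2$, so $\psi(g)=\psi^2(z)=\psi(\psi(z))$ for some $z\in G$. Setting $u=\psi(z)\in\im\psi$, the element $v=u^{-1}g$ satisfies $\psi(v)=\psi(u)^{-1}\psi(g)=\psi^2(z)^{-1}\psi(g)=1$, so that $v\in\ke\psi$ and $g=uv$ is a product of an element of $\im\psi$ with an element of $\ke\psi$.

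Finally, since $\psi$ is a normal endomorphism, the two factors $\im\psi=\im\varphi^r$ and $\ke\psi=\ke\varphi^r$ are both normal in $G$; having shown that they generate $G$ and intersect trivially, the commutator $[\im\psi,\ke\psi]$ lies in $\im\psi\cap\ke\psi=1$, so the factors centralise one another and $G=(\im\varphi^r)\times(\ke\varphi^r)$ is an internal direct product. I expect the only delicate point to be the bookkeeping in the surjectivity half of the decomposition: one must use the sharp equality $\im\psi=\im\psi^2$ (rather than merely stabilisation at level $r$) together with the fact that $\psi$ is a homomorphism in order to land $v$ in $\ke\psi$. The chain-condition half and the normality of the two factors are routine once one observes that all powers of $\varphi$ are again normal.
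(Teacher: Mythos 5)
Your proof is correct; it is the classical Fitting argument, and the paper itself offers no proof of this lemma, simply citing it as essentially due to Fitting with a pointer to Robinson, \emph{A Course in the Theory of Groups}, 3.3.4, where precisely this argument appears. All the steps check out: the powers of $\varphi$ remain normal endomorphisms so the image chain consists of normal subgroups (kernels of homomorphisms are normal in any case), Min-n and Max-n stabilise the two chains, and the stabilised equalities $\im\psi=\im\psi^2$ and $\ke\psi=\ke\psi^2$ give trivial intersection and the product decomposition exactly as you wrote.
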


With the aid of Fitting's lemma, we can show that for groups satisfying the maximal and the minimal condition on normal subgroups the two aforementioned properties are equivalent.


\begin{theorem}\label{ThComp}
Let $H$ and $K$ be groups satisfying the maximal and the minimal conditions on normal subgroups. Then $H$ and $K$ are incompatible if and only if they have no common non-trivial direct factor.
\end{theorem}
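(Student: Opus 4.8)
The plan is to prove the two implications separately, observing that one of them is already in hand. The implication that incompatibility forces the absence of a common non-trivial direct factor is exactly the contrapositive of the first assertion of Lemma \ref{LemComm}, which holds for arbitrary groups. So the whole content lies in the converse: assuming $H$ and $K$ are \emph{compatible}, I want to exhibit a non-trivial direct factor common to both, and this is where the chain conditions enter, through Fitting's lemma (Lemma \ref{Fitting}).

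Unwinding the definition of compatibility, I obtain $\sigma\in\Hom(H,K)$ and $\tau\in\Hom(K,H)$ with $\sigma\tau$ and $\tau\sigma$ normal, together with (after possibly swapping the roles of $H$ and $K$) a non-trivial $h\in H$ fixed by $\theta:=\tau\sigma$. Set $\theta':=\sigma\tau$, a normal endomorphism of $K$. The elementary observation driving everything is the intertwining $\sigma\theta=\theta'\sigma$ and $\tau\theta'=\theta\tau$, immediate from the definitions of $\theta$ and $\theta'$; it yields $\sigma(\im\theta^n)\subseteq\im(\theta')^n$ and $\tau(\im(\theta')^n)\subseteq\im\theta^n$ for every $n$.

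Next I apply Lemma \ref{Fitting} to both $\theta$ and $\theta'$ and fix a single integer $n$ large enough that all four of their images and kernels stabilise from exponent $n$ on. Writing $H_1=\im\theta^n$ and $K_1=\im(\theta')^n$, Fitting's lemma supplies the decompositions $H=H_1\times\ke\theta^n$ and $K=K_1\times\ke(\theta')^n$, with $\theta|_{H_1}$ and $\theta'|_{K_1}$ automorphisms. By the inclusions above, $\sigma$ restricts to a homomorphism $H_1\to K_1$ and $\tau$ to one $K_1\to H_1$, whose two composites are precisely $\tau|_{K_1}\sigma|_{H_1}=\theta|_{H_1}$ and $\sigma|_{H_1}\tau|_{K_1}=\theta'|_{K_1}$, hence bijections. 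Bijectivity of the composites forces $\sigma|_{H_1}$ and $\tau|_{K_1}$ to be mutually inverse bijections up to these automorphisms, so $H_1\simeq K_1$. Finally I check non-triviality: from $\theta(h)=h$ I get $h=\theta^n(h)\in H_1$ with $h\neq1$, so $H_1\neq1$; and $\sigma(h)\neq1$ (else $h=\tau\sigma(h)=\tau(1)=1$) with $\theta'(\sigma(h))=\sigma(\theta(h))=\sigma(h)$ places $\sigma(h)$ in $K_1\setminus\{1\}$, so $K_1\neq1$. Thus $H_1\simeq K_1$ is a non-trivial common direct factor, which is the desired conclusion.

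I expect the main obstacle to be the bookkeeping in the last step: ensuring that one exponent $n$ serves simultaneously all four stabilisation conditions and, above all, that the restricted maps genuinely land in the stabilised images rather than in the ambient groups. The intertwining relations are exactly what make the restrictions well defined, while Fitting's lemma is what upgrades a single fixed non-trivial element into an honest direct factor on which $\theta$ acts invertibly; once both are secured, the isomorphism $H_1\simeq K_1$ follows purely formally from the two composites being automorphisms, and no separate computation is needed.
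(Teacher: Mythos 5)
Your proof is correct and follows essentially the same route as the paper: one direction via Lemma \ref{LemComm}, and the converse by applying Fitting's lemma (Lemma \ref{Fitting}) to $\tau\sigma$ and $\sigma\tau$, using the intertwining relations to restrict $\sigma$ and $\tau$ to the stabilised images and concluding that these are isomorphic non-trivial direct factors. The only (cosmetic) difference is that you deduce bijectivity of the restricted maps directly from the two composites being automorphisms, where the paper instead argues by contradiction with proper containments of the stable images; both are sound.
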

\begin{proof}
First, assume without loss of generality that we may find elements $\alpha$ of $\Hom(H,K)$ and $\beta$ of $\Hom(K,H)$ such that $\alpha\beta$ and $\beta\alpha$ are normal. By Lemma \ref{Fitting}, there exist a positive integer $r$ such that
$$\im(\alpha\beta)^r=\im(\alpha\beta)^{r+1}=\cdots\quad,\quad\ke(\alpha\beta)^r=\ke(\alpha\beta)^{r+1}=\cdots,$$ $$\im(\beta\alpha)^r=\im(\beta\alpha)^{r+1}=\cdots\quad,\quad \ke(\beta\alpha)^r=\ke(\beta\alpha)^{r+1}=\cdots$$ 
and that
$$H=\ke\sigma\times\im\sigma,\quad\quad K=\ke\tau\times\im\tau$$ where $\sigma=(\beta\alpha)^r$ and $\tau=(\alpha\beta)^r$. Since $\alpha\sigma=\tau\alpha$, it easily follows that $\alpha$ induces a map from $\im\sigma$ to $\im\tau$. Moreover, let $a$ be an element of $H$ such that $\alpha\sigma(a)=1$. Then $\sigma(a)$ is an element of $\ke\alpha$, which is also a subgroup of $\ke\sigma$, but the latter has trivial intersection with $\im\sigma$ and hence $\sigma(a)=1$. This shows that the map induced by $\alpha$ is injective. In the same way, $\beta$ maps $\im\tau$ to $\im\sigma$ as an injective homomorphism.

Put now $I_\sigma=\im\sigma$ and $I_\tau=\im\tau$, suppose that $H$ contains a non-trivial element $h$ such that $\beta\alpha(h)=h$ and assume for a contradiction that $I_\sigma$ and $I_\tau$ are not isomorphic. Then, in particular, $\alpha(I_\sigma)$ is a proper subgroup of $I_\tau$, because the restriction of $\alpha$ to $I_\sigma$ is injective. Moreover, $\alpha(I_\sigma)$ is not trivial, because $h$ is a non-trivial element of $I_\sigma=\im\sigma$. For the same reason $\beta\alpha(I_\sigma)$ is a proper subgroup of $I_\sigma$. However, $\beta\alpha(I_\sigma)=\im (\beta\alpha)^{r+1}=\im\sigma$, a contradiction.

The converse follows from Lemma \ref{LemComm}.
\end{proof}

Following the proof of Theorem \ref{ThComp}, we get the following case for centrally incompatible pairs of groups, which should be seen in analogy to Proposition \ref{PropCInc}.

\begin{theorem}\label{ThCComp}
Let $H$ and $K$ be groups satisfying the maximal and the minimal conditions on normal subgroups. Then $H$ and $K$ are centrally incompatible if and only if they have no common non-trivial central direct factor.
\end{theorem}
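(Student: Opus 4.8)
The plan is to mirror the structure of the proof of Theorem \ref{ThComp}, making the adjustments needed to track \emph{central} direct factors. The backward direction is a direct application of Lemma \ref{LemComm}: if $H$ and $K$ have a common non-trivial central direct factor $X \simeq Y$ (with $X \leq Z(H)$ and $Y \leq Z(K)$), then the isomorphism $\varphi\colon X \to Y$ together with its inverse provide homomorphisms whose images are central, witnessing \emph{central} compatibility. One should check that the construction in Lemma \ref{LemComm} respects centrality, i.e. that $\iota_Y\varphi\pi_X \in \Hom(H, Z(K))$ and $\iota_X\varphi^{-1}\pi_Y \in \Hom(K, Z(H))$, which is immediate since $\im(\iota_Y\varphi\pi_X) = Y \leq Z(K)$ and symmetrically for the other map.

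For the forward direction I would run the Fitting-lemma argument of Theorem \ref{ThComp} but starting from \emph{central} homomorphisms. Suppose $H$ and $K$ are centrally compatible; then there exist $\sigma \in \Hom(H, Z(K))$ and $\tau \in \Hom(K, Z(H))$ with $\sigma\tau$, $\tau\sigma$ normal, and (without loss of generality) a non-trivial $h \in H$ with $\tau\sigma(h) = h$. The key observation, already exploited in the proof of Proposition \ref{PropCInc}, is that homomorphisms landing in the centre are automatically normal endomorphisms, and their images are central subgroups; in particular the fixed element $h$ lies in $Z(H)$. Applying Lemma \ref{Fitting} to the normal endomorphisms $\sigma\tau$ of $K$ and $\tau\sigma$ of $H$, I obtain the integer $r$ and the Fitting decompositions $H = \ke\bar\sigma \times \im\bar\sigma$ and $K = \ke\bar\tau \times \im\bar\tau$, where $\bar\sigma = (\tau\sigma)^r$ and $\bar\tau = (\sigma\tau)^r$.

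The heart of the argument is then identical to Theorem \ref{ThComp}: $\sigma$ and $\tau$ induce mutually injective maps between $\im\bar\sigma$ and $\im\bar\tau$, and the assumption that these two subgroups are non-isomorphic forces the contradiction $\beta\alpha(I_\sigma) = \im\bar\sigma$ being a proper subgroup of itself. Hence $\im\bar\sigma \simeq \im\bar\tau$, and since $h \in \im\bar\sigma$ is non-trivial, this common factor is non-trivial. The only genuinely new point to verify is that this common direct factor is \emph{central}: because $\im\sigma \leq Z(K)$ and $\im\tau \leq Z(H)$, the iterated images $\im\bar\sigma$ and $\im\bar\tau$ are contained in $Z(H)$ and $Z(K)$ respectively, so the isomorphic direct factor is a central one in each group.

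The main obstacle I anticipate is bookkeeping around centrality under iteration: one must confirm that passing to powers $(\tau\sigma)^r$ and $(\sigma\tau)^r$ keeps the images inside the centres (which it does, since $\im(\tau\sigma)^r \subseteq \im\tau \leq Z(H)$ and likewise for $K$), and that the Fitting decomposition therefore splits off a genuinely \emph{central} direct factor rather than merely a normal one. Once centrality of the images is established, the injectivity-of-induced-maps and the proper-subgroup contradiction carry over verbatim from Theorem \ref{ThComp}, so no substantially new computation is required.
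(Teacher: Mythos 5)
Your proposal is correct and follows exactly the route the paper intends: the paper gives no separate proof of Theorem \ref{ThCComp}, stating only that it follows the proof of Theorem \ref{ThComp}, and your argument supplies precisely the needed adjustments — the maps in Lemma \ref{LemComm} land in the centres when the common factor is central, and in the Fitting decomposition $\im(\tau\sigma)^r\leq\im\tau\leq Z(H)$ and $\im(\sigma\tau)^r\leq\im\sigma\leq Z(K)$, so the isomorphic non-trivial direct factors produced are central ones.
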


Putting together Proposition \ref{PropCInc} and Theorem \ref{ThCComp}, the following is straightforward.

\begin{theorem}\label{ThCCompAut}
Let $H$ and $K$ be groups satisfying the maximal and the minimal conditions on normal subgroups. Then $\mathcal{A}_{H,K}$ is a subgroup of $\aut (H\times K)$ if and only if they have no common non-trivial central direct factor.
\end{theorem}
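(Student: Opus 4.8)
The plan is to combine the two results that immediately precede this theorem. Theorem \ref{ThCCompAut} is a synthesis of Proposition \ref{PropCInc} and Theorem \ref{ThCComp}, and the entire content of the statement is the transitivity of the biconditionals they provide. First I would invoke Proposition \ref{PropCInc}: since $H$ and $K$ satisfy the minimal condition on normal subgroups (a fortiori, at least one of them does, which is all that Proposition is stated to require), that proposition gives the equivalence that $\mathcal{A}_{H,K}$ is a subgroup of $\aut(H\times K)$ if and only if $H$ and $K$ are centrally incompatible. This handles the link between the subgroup property and central incompatibility without any further work.

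Next I would invoke Theorem \ref{ThComp}, or rather its central analogue Theorem \ref{ThCComp}, which is the result stated immediately above. Under the standing hypothesis that both $H$ and $K$ satisfy the maximal and the minimal conditions on normal subgroups, that theorem asserts that $H$ and $K$ are centrally incompatible if and only if they have no common non-trivial central direct factor. Chaining this biconditional with the one from Proposition \ref{PropCInc} yields precisely the desired equivalence: $\mathcal{A}_{H,K}$ is a subgroup of $\aut(H\times K)$ if and only if $H$ and $K$ have no common non-trivial central direct factor.

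The only care needed in assembling these two statements is a bookkeeping check on hypotheses. Proposition \ref{PropCInc} needs only that one of $H$, $K$ satisfies the minimal condition, while Theorem \ref{ThCComp} needs that both satisfy both the maximal and the minimal conditions; the hypothesis of the present theorem supplies the stronger of the two, so both ancillary results apply verbatim with no adjustment. I would also note that central incompatibility is a symmetric notion in $H$ and $K$ (as is the existence of a common central direct factor), so no asymmetry is introduced by the order in which the two groups are treated.

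Because the proof is a pure composition of two established biconditionals, I do not expect any genuine obstacle; the word \emph{straightforward} in the lead-in to the statement reflects exactly this. If there is any subtlety at all, it lies in having correctly proven Theorem \ref{ThCComp} — the central variant of Theorem \ref{ThComp} — by mirroring the Fitting-decomposition argument of Theorem \ref{ThComp} with the homomorphisms restricted to land in the centres and the direct factors taken to be central; but that work is already discharged upstream, so here one simply cites it.
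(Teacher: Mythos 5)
Your proposal matches the paper exactly: the paper states this theorem as a straightforward consequence of putting together Proposition \ref{PropCInc} and Theorem \ref{ThCComp}, which is precisely the chaining of biconditionals you describe, and your hypothesis bookkeeping is correct.
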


On the other hand, putting together Theorem \ref{incompatible} and Theorem \ref{ThComp}, we get the following

\begin{theorem}\label{ThCompAut}
Let $H$ and $K$ be groups satisfying the maximal and the minimal conditions on normal subgroups. Then $\aut (H\times K)\simeq\mathcal{A}_{H,K}$ if and only if they have no common non-trivial direct factor.
\end{theorem}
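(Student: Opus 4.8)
The plan is to combine the two main tools assembled in the preceding sections: Theorem \ref{incompatible} (which turns incompatibility plus the minimal condition on normal subgroups into the isomorphism $\aut(H\times K)\simeq\mathcal{A}_{H,K}$) and Theorem \ref{ThComp} (which, under the maximal and minimal conditions on normal subgroups, identifies incompatibility with the absence of a common non-trivial direct factor). The statement to be proved is an ``if and only if'', and the strategy is to route each direction through these results so that almost no fresh computation is needed.

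First I would prove the harder implication, namely that sharing no common non-trivial direct factor forces $\aut(H\times K)\simeq\mathcal{A}_{H,K}$. Assuming $H$ and $K$ satisfy both chain conditions and have no common non-trivial direct factor, Theorem \ref{ThComp} applies directly and yields that $H$ and $K$ are \emph{incompatible}. Now both groups satisfy in particular the minimal condition on normal subgroups, so the second assertion of Theorem \ref{incompatible} gives exactly $\aut(H\times K)\simeq\mathcal{A}_{H,K}$. That completes this direction. It is essentially a two-line deduction, since all the real work --- the Fitting-lemma decomposition in Theorem \ref{ThComp} and the delicate surjectivity arguments in Theorem \ref{incompatible} --- has already been carried out.

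For the converse I would argue by contraposition: suppose $H$ and $K$ \emph{do} share a common non-trivial direct factor. Then Lemma \ref{LemComm} furnishes an explicit automorphism of $H\times K$ (the one built from the projections and immersions of the shared factor, together with the isomorphism $\varphi$ between the factors) that does \emph{not} lie in $\mathcal{A}_{H,K}$, because its upper-left component $\iota_M\pi_M$ fails to be an automorphism of $H$. Hence $\aut(H\times K)$ is strictly larger than $\mathcal{A}_{H,K}$, so in particular they are not isomorphic as the inclusion $\mathcal{A}_{H,K}\subseteq\aut(H\times K)$ would have to be proper --- and more to the point, $\aut(H\times K)\not\subseteq\mathcal{A}_{H,K}$, contradicting the desired identification. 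This disproves the isomorphism whenever a common factor exists, giving the converse.

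The main obstacle is conceptual rather than computational: I must make sure the chain conditions are genuinely needed only for the forward direction and that Lemma \ref{LemComm} delivers a failure of the isomorphism, not merely of set-theoretic equality. The subtle point is that ``$\aut(H\times K)\simeq\mathcal{A}_{H,K}$'' as the theorem uses it means the natural inclusion is a bijection (equivalently $\aut(H\times K)\subseteq\mathcal{A}_{H,K}$, since $\mathcal{A}_{H,K}$ is then a subgroup and always sits inside $\aut(H\times K)$ when it is closed), so exhibiting a single automorphism outside $\mathcal{A}_{H,K}$ is exactly what rules the isomorphism out. I would therefore state this compatibility of notation explicitly, so that Lemma \ref{LemComm} can be invoked cleanly, and then the proof reduces to citing Theorem \ref{ThComp}, Theorem \ref{incompatible} and Lemma \ref{LemComm} in turn.
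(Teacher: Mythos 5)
Your proposal is correct and is essentially the paper's own argument: the paper derives Theorem \ref{ThCompAut} precisely by combining Theorem \ref{incompatible} with Theorem \ref{ThComp}, with Lemma \ref{LemComm} supplying the converse exactly as you describe. Your explicit remark that ``$\simeq$'' is to be read via the identification of $\aut(H\times K)$ with a subset of $\mathcal{M}_{H,K}$ is a sensible clarification consistent with the paper's conventions.
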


The next result has to be seen in connection with Corollary \ref{cor1}.

\begin{cor}\label{corr}
Let $H$ and $K$ be groups satisfying the maximal and the minimal conditions on normal subgroups. If $H$ and $K$ have no common non-trivial direct factor and both $\Hom(H,Z(K))$ and $\Hom(K,Z(H))$ are zero, then $\aut (H\times K)\simeq\aut H\times\aut K$.
\end{cor}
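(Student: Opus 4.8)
The plan is to combine Theorem~\ref{ThCompAut} with Corollary~\ref{cor1} in a two-step fashion. First I would invoke Theorem~\ref{ThCompAut}: since $H$ and $K$ satisfy the maximal and minimal conditions on normal subgroups and share no common non-trivial direct factor, we immediately obtain the isomorphism $\aut(H\times K)\simeq\mathcal{A}_{H,K}$. This handles the harder half of the work, since all the delicate incompatibility and Fitting-lemma arguments are already packaged inside that theorem. What remains is purely to identify $\mathcal{A}_{H,K}$ under the additional vanishing hypothesis.

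Next I would use the assumption that $\Hom(H,Z(K))=0=\Hom(K,Z(H))$. Looking at the definition of $\mathcal{A}_{H,K}$, an element is a matrix $\left(\begin{smallmatrix}\alpha &\beta\\ \gamma &\delta\end{smallmatrix}\right)$ with $\alpha\in\aut H$, $\delta\in\aut K$, $\beta\in\Hom(K,Z(H))$ and $\gamma\in\Hom(H,Z(K))$. Under the hypothesis the only admissible off-diagonal entries are $\beta=0$ and $\gamma=0$, so $\mathcal{A}_{H,K}$ consists precisely of the diagonal matrices $\left(\begin{smallmatrix}\alpha &0\\ 0 &\delta\end{smallmatrix}\right)$ with $\alpha\in\aut H$ and $\delta\in\aut K$. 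The row-by-column multiplication on such diagonal matrices is componentwise, so the map $\left(\begin{smallmatrix}\alpha &0\\ 0 &\delta\end{smallmatrix}\right)\mapsto(\alpha,\delta)$ is visibly an isomorphism onto $\aut H\times\aut K$.

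I expect no serious obstacle here: the statement is essentially a corollary that chains two already-established results, and the only thing to verify is the routine identification of $\mathcal{A}_{H,K}$ with $\aut H\times\aut K$ when the central hom-groups vanish. The mildest point to be careful about is confirming that the hypotheses feeding Theorem~\ref{ThCompAut} are exactly those assumed (the Max-n/Min-n conditions and the absence of a common non-trivial direct factor), which they are by assumption; one might also note for completeness that the vanishing of $\Hom(H,Z(K))$ and $\Hom(K,Z(H))$ is consistent with, and not implied by, having no common central direct factor, so the extra hypothesis is genuinely needed to collapse $\mathcal{A}_{H,K}$ onto the diagonal. Alternatively, one could bypass Theorem~\ref{ThCompAut} and argue directly via Corollary~\ref{cor1}, observing that the vanishing of both central hom-groups, together with incompatibility (equivalently, no common direct factor, by Theorem~\ref{ThComp}), forces $\Hom(H,K)=0=\Hom(K,H)$; but routing through $\mathcal{A}_{H,K}$ is cleaner and keeps the result firmly inside the matrix framework developed in the paper.
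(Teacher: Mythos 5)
Your main argument is correct and is exactly the paper's (implicit) proof: Theorem \ref{ThCompAut} gives $\aut(H\times K)\simeq\mathcal{A}_{H,K}$, and the vanishing of $\Hom(K,Z(H))$ and $\Hom(H,Z(K))$ collapses $\mathcal{A}_{H,K}$ onto the diagonal, which is visibly isomorphic to $\aut H\times\aut K$. One caution about your closing aside: incompatibility together with the vanishing of the central hom-groups does \emph{not} force $\Hom(H,K)=0=\Hom(K,H)$ (for instance $H=S_3$ and $K=A_4$ are non-isomorphic indecomposable centreless groups, hence incompatible by Theorem \ref{ThComp}, yet both hom-groups are non-zero), so the alternative route through Corollary \ref{cor1} would fail; the route you actually took is the right one.
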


Finally, from Theorem \ref{ThCompAut} we obtain also the following known result.

\begin{cor}\label{corrr}
Let $H$ and $K$ be finitely generated abelian groups. Then $\aut (H\times K)\simeq\mathcal{A}_{H,K}$ if and only if they have no common non-trivial direct factor.
\end{cor}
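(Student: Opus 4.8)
The plan is to deduce this corollary from Theorem \ref{ThCompAut} by verifying that finitely generated abelian groups satisfy its hypotheses, namely the maximal and the minimal conditions on normal subgroups. First I would observe that for an abelian group every subgroup is normal, so these two chain conditions on normal subgroups coincide with the ordinary ascending and descending chain conditions on subgroups. A finitely generated abelian group $G$ is, by the structure theorem, a direct sum of finitely many cyclic groups $\mZ$ and $\mZ/n\mZ$; being finitely generated it is Noetherian as a $\mZ$-module, so it satisfies the maximal condition on (normal) subgroups. For the minimal condition I would invoke that $G$ is finitely generated \emph{and} its proper quotients are again finitely generated, and argue that any strictly descending chain of subgroups must stabilise: each infinite cyclic summand $\mZ$ does satisfy the descending chain condition on subgroups (its subgroups $n\mZ$ are linearly ordered only by divisibility, and a strictly descending chain $n_1\mZ\supsetneq n_2\mZ\supsetneq\cdots$ would force $|n_1|<|n_2|<\cdots$, which cannot descend indefinitely once one fixes the finitely many generators), while the torsion part is finite and hence trivially Artinian.

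Once the two chain conditions are established for both $H$ and $K$, Theorem \ref{ThCompAut} applies verbatim: it gives $\aut(H\times K)\simeq\mathcal{A}_{H,K}$ if and only if $H$ and $K$ have no common non-trivial direct factor. Since the hypotheses of that theorem hold automatically for every finitely generated abelian group, the stated equivalence follows directly, and there is nothing further to prove beyond the verification of the chain conditions.

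The step I expect to require the most care is the minimal condition on subgroups for the free part. A single copy of $\mZ$ is Noetherian but its poset of subgroups is order-isomorphic to the divisibility poset on $\mN$, so one must argue honestly that this admits no infinite strictly descending chain; the clean way is to note that if $0\neq A\supsetneq B$ are subgroups of $\mZ^r$ with $A$ of rank $s$, then $B$ either drops in rank or has strictly larger (finite) index in $A$, and index is a positive integer that cannot strictly increase forever without the rank dropping, while the rank is a non-negative integer bounded by $r$. This double induction on (rank, index) terminates, giving the descending chain condition. I would present this as a short lemma or, more economically, simply cite the well-known fact that a finitely generated abelian group satisfies both chain conditions on subgroups, since this is entirely standard.
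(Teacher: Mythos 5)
Your reduction to Theorem \ref{ThCompAut} founders on the minimal condition: it is false that a finitely generated abelian group satisfies the descending chain condition on (normal) subgroups. Already $\mZ$ fails it, via the infinite strictly descending chain $\mZ\supsetneq 2\mZ\supsetneq 4\mZ\supsetneq 8\mZ\supsetneq\cdots$. Your argument breaks exactly where you anticipated trouble: you assert that the index of $B$ in $A$ ``cannot strictly increase forever without the rank dropping,'' but the chain above has constant rank $1$ and indices $2,4,8,\ldots$ increasing without bound --- nothing prevents this. The ``well-known fact'' you propose to cite does not exist; the standard fact is that an abelian group satisfies both chain conditions on subgroups if and only if it is a direct sum of finitely many cyclic and Pr\"ufer groups, and a finitely generated one does so if and only if it is finite. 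Consequently Theorem \ref{ThCompAut} applies only when both $H$ and $K$ are finite, whereas the corollary genuinely covers pairs such as $H=\mZ$, $K=\mZ/2\mZ$, which have no common non-trivial direct factor; your proof says nothing about these.

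The missing ingredient is the one the paper's own (very terse) proof points to: the structure theorem itself. If $H$ and $K$ are both infinite, each has $\mZ$ as a direct summand, so they share a common non-trivial direct factor and both sides of the equivalence fail (Lemma \ref{LemComm} for the left-hand side); thus the substantive case is confined to groups where finiteness, and hence Min-n, is available. Even so, the mixed case (one factor infinite, one finite) is not reached by Theorem \ref{ThCompAut}, and the cleanest repair is to bypass it entirely: a finitely generated abelian group decomposes as $D\oplus B\oplus F$ with $D=0$, $B$ its finite (hence bounded-exponent) torsion subgroup and $F$ free of finite rank, so Theorem \ref{PropAb} applies verbatim and yields the corollary for all finitely generated abelian $H$ and $K$ at once. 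As it stands, your proof establishes the statement only for finite $H$ and $K$.
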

\begin{proof}
The statement immediately follows from the structure of finitely generated abelian groups, which yields in particular that $H$ and $K$ have no common non-trivial direct factor if and only if they are both finite.
\end{proof}

Notice here that Example \ref{Ex1} shows that Corollary \ref{corrr} cannot be extended to infinitely generated abelian groups, even if they have finite $0$-rank.

\bigskip

The aim is now to go further beyond the class of groups satisfying the maximal and the minimal conditions on normal subgroups. We begin applying the previous methods to two antipodal classes of groups: abelian groups and stem groups.

\subsection{Abelian groups}

\begin{lemma}\label{AbIncTotInc}
Let $H$ and $K$ be abelian groups. If $H=D\oplus B\oplus F$, where $D$ is divisible, $B$ has finite exponent and $F$ is free, then $H$ and $K$ are incompatible if and only if $H$ and $K$ are totally incompatible.
\end{lemma}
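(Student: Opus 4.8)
The plan is to prove only the non-trivial implication, that incompatibility forces total incompatibility, since the converse is already recorded before the statement. Throughout I work additively and use that, $H$ and $K$ being abelian, every endomorphism is normal, so the normality clauses in both definitions are vacuous. First I reduce both notions to a statement about a single endomorphism. Writing $\phi=\tau\sigma\in\End H$ and $\psi=\sigma\tau\in\End K$ for $\sigma\in\Hom(H,K)$, $\tau\in\Hom(K,H)$, total incompatibility says exactly that each such $\phi$ (equivalently each $\psi$, by the relation $\phi\tau=\tau\psi$ noted after the definition) is \emph{locally nilpotent}, i.e. $H=\bigcup_n\ke\phi^n$, while incompatibility says exactly that each $\phi$ is \emph{fixed-point-free}, i.e. $1-\phi$ is injective.

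The first key observation upgrades incompatibility. For $j\geq1$ one has $\phi^j=\tau\psi^{j-1}\sigma$, so for any $q(x)\in\mZ[x]$ the endomorphism $\phi\,q(\phi)=\tau\bigl(q(\psi)\sigma\bigr)$ is again of the form $\tau'\sigma'$, with $\tau'=\tau$ and $\sigma'=q(\psi)\sigma\in\Hom(H,K)$. Hence incompatibility applied to all of these composites gives that $1-\phi\,q(\phi)$ is injective for every $q\in\mZ[x]$; equivalently, $r(\phi)$ is injective for every $r\in\mZ[x]$ with $r(0)=\pm1$. In particular no non-zero element of $H$ is periodic ($\phi^t h=h$ forces $h=0$), which already disposes of every $\phi$ with a finite orbit: pigeonholing $\{\phi^n h_0\}$ yields $\phi^a h_0=\phi^{a+t}h_0$, so a non-zero $\phi^a h_0$ would be a forbidden periodic point.

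The heart of the matter is to show that, under the structural hypothesis $H=D\oplus B\oplus F$, fixed-point-freeness of every $\phi=\tau\sigma$ actually forces local nilpotence. I would argue by contradiction from a $\phi$ with $\phi^n h_0\neq0$ for all $n$, splitting along the characteristic (hence $\phi$-invariant) torsion subgroup $T=T(H)=T(D)\oplus B$. Since $\phi$ is locally nilpotent if and only if both its restriction to $T$ and the induced map on $H/T\cong(D/T(D))\oplus F$ are, the failure is visible either on $T$ or on the torsion-free quotient. On the divisible pieces (the $\Q$-space inside $D$ and the Prüfer part of $T(D)$) I would exploit divisibility to divide down a non-vanishing orbit and normalise a leading coefficient, producing a relation $r(\phi)z=0$ with $r(0)=\pm1$ on a non-zero $z$, against the injectivity established above. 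On the free part $F$ the mechanism is the rescaling of integral functionals: a non-zero homomorphism into a torsion-free group divides through the content of its image to a surjection, and this is precisely what converts a non-trivial orbit into a genuine eigenvalue-$1$ vector, i.e. a fixed point of a newly assembled composite $\tau'\sigma'$.

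The main obstacle is exactly this last regime, together with the bounded part $B$ when it has infinite rank: fixed-point-freeness is strictly weaker than local nilpotence for a single endomorphism (multiplication by $2$ on a free summand, or the shift on $\bigoplus_{\mathbb N}\mathbb F_p$, are fixed-point-free but not locally nilpotent), so one cannot stay with $\phi$ and its polynomial consequences alone. What rescues the statement is the \emph{full} quantification over $\Hom(H,K)$ and $\Hom(K,H)$ in the definition of incompatibility: the hypotheses on $D$, $B$ and $F$ are what let one manufacture, out of a putative infinite orbit, a new pair $(\sigma',\tau')$ whose composite fixes a non-zero element — divisibility giving division, finite exponent giving a finite coefficient ring for a pigeonhole argument once any free $\mathbb F_p[x]$-submodule has been split off, and freeness giving the rescaling of functionals. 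Carrying out the splitting-off of such free submodules and the assembly of the contradicting composite in infinite rank is the delicate technical point of the proof.
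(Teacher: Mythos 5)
Your reduction of the two notions to ``every $\tau\sigma$ is fixed-point-free'' versus ``every $\tau\sigma$ is locally nilpotent'' is correct, and the observation that $\phi\,q(\phi)=\tau\bigl(q(\psi)\sigma\bigr)$ is again a composite of the allowed form --- so that $r(\phi)$ is injective whenever $r(0)=\pm1$ --- is a genuinely nice strengthening of the hypothesis. But the argument stops exactly where the statement becomes non-trivial. As you yourself note, fixed-point-freeness together with all its polynomial consequences is strictly weaker than local nilpotence (multiplication by $2$ on $\mZ$, the shift on $\bigoplus_{\mN}\mZ/p\mZ$), so everything hinges on manufacturing a new pair $(\sigma',\tau')$ with a non-zero fixed point out of a putative non-vanishing orbit. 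For each of the three pieces $D$, $B$, $F$ you describe only the intended mechanism (``divide down and normalise a leading coefficient'', ``rescale through the content'', ``pigeonhole over a finite coefficient ring after splitting off free submodules'') and then explicitly defer the construction as ``the delicate technical point''. That deferred point is the whole proof: as written this is a plan, not a proof, and I do not see how the $B$-part in particular would be completed along the lines you sketch, since the obstruction in infinite rank is shift-like behaviour, which no pigeonhole over coefficients detects.

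The lever you are missing is Lemma \ref{LemComm}: incompatible groups share no non-trivial direct factor. The paper's proof uses this three times. First, if $F\neq 0$ then any non-zero homomorphism from $K$ to $F$ has free non-zero image, so $K$ splits over an infinite cyclic quotient and $\mZ$ becomes a common direct summand; hence $\Hom(K,H)=\Hom(K,D\oplus B)$ and one may assume $F=0$. Second, the image of $D$ in $K$ is divisible, hence a direct summand of $K$, and if non-trivial it would share a $\Q$ or a Pr\"ufer summand with $D$; so the orbit leaves $D$ at the first step and one may assume $H=B$ is bounded. Third --- and this is the actual engine --- given a non-vanishing orbit in a bounded $H$, the successive images lie in cyclic pure (hence direct) summands of $H$ and of $K$ whose orders must \emph{strictly} increase at each step, precisely because $H$ and $K$ cannot share a cyclic direct summand; this contradicts finite exponent. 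None of your three mechanisms reproduces this order-growth argument, which is where the structural hypothesis on $H$ really enters.
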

\begin{proof}
We just have to prove necessity. Assume that $H$ and $K$ are incompatible. Then $H$ and $K$ have no non-trivial direct factor in common by Lemma \ref{LemComm} and in particular if $H$ is infinite, $K$ has no infinite cyclic quotient. Hence, $\Hom(K,H)=\Hom(K,D\oplus B)$ and we may assume that $F=0$. Let now $\psi$ and $\chi$ be elements of $\Hom(H,K)$ and of $\Hom(K,H)$, respectively, and assume for a contradiction that $H$ contains an element $h$ such that $(\chi\psi)^l(h)\neq0$ for any positive integer $l$. From this we may assume without loss of generality that all the elements of the form $\psi^l(\chi\psi)^m(h)$ for any non-negative integers $l$ and $m$ have the same order.
Since divisible subgroups of abelian groups are always direct factors and $\psi(D)$ is divisible, it follows that $\Hom(D,\psi(D))=0$ and hence $h$ is not an element of $D$. So we can assume without loss of generality that $H$ has finite exponent, call it $e$. 
Let now $a_1$ be an element of $H$ such that $p^na_1=h$ for a non-negative integer $n$ and such that the equation $px=a_1$ has no solution in $H$. By definition, $\gen{a_1}$ is a pure bounded subgroup of $H$ and hence it is a direct summand (see for instance 4.3.8 in \cite{RobCour}). In the same way, take a direct summand $\gen{b_1}$ of $K$ containing $\psi(a_1)$. Since $h$ and $\psi(h)$ have the same order, $\psi$ has to map injectively $\gen{a_1}$ to $\gen{b_1}$, so that $|\gen{a_1}|<|\gen{b_1}|$, where the inequality comes from the fact that $H$ and $K$ have no common non-trivial direct summands. Let now $\gen{a_2}$ be a direct summand of $H$ containing $\chi\psi(h)$. For the same reason, $|\gen{b_1}|<|\gen{a_2}|$ and in particular $|\gen{a_1}|<|\gen{a_2}|$. However, $H$ has finite exponent and hence repeating this argument leads to a contradiction.
\end{proof}

Examples \ref{ExTotInc} and \ref{ExTotIncPer} show that the same cannot be said for abelian groups in general, be they torsion-free or periodic.

\begin{theorem}\label{PropAb} Let $H$ and $K$ be abelian groups. If $H=D\oplus B\oplus F$, where $D$ is divisible, $B$ has finite exponent and $F$ is free, then $\aut (H\oplus K)\simeq\mathcal{A}_{H,K}$ if and only if $H$ and $K$ have no common non-trivial direct summand.
\end{theorem}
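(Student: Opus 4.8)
The plan is to reduce Theorem \ref{PropAb} to the machinery already assembled for totally incompatible pairs. The statement is an ``if and only if'', so I would handle the two directions separately, and the easier direction is the necessity. If $H$ and $K$ \emph{do} share a common non-trivial direct summand, then by Lemma \ref{LemComm} the pair $(H,K)$ is compatible and, more to the point, $\aut(H\times K)$ is not contained in $\mathcal{A}_{H,K}$; since $\mathcal{A}_{H,K}$ is always a \emph{subset} of $\End(H\times K)$ but the displayed swapping-type automorphism produced in that lemma lies outside it, the two cannot be isomorphic in the natural way, so no common non-trivial summand is necessary. (Here I am using that for abelian groups the direct product is written additively as $H\oplus K$, matching the statement.)

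For the sufficiency, I would argue as follows. Assume $H$ and $K$ have no common non-trivial direct summand. The hypothesis on $H$, namely $H=D\oplus B\oplus F$ with $D$ divisible, $B$ of finite exponent and $F$ free, is exactly the hypothesis of Lemma \ref{AbIncTotInc}. So the strategy is: first establish that $H$ and $K$ are \emph{incompatible}, then upgrade this to \emph{total} incompatibility via Lemma \ref{AbIncTotInc}, and finally invoke Proposition \ref{htotincompatible} to conclude $\aut(H\oplus K)\simeq\mathcal{A}_{H,K}$. The middle step is free once incompatibility is in hand, because Lemma \ref{AbIncTotInc} asserts precisely that for such an $H$, incompatibility and total incompatibility coincide. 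Thus the whole burden of the proof is the first step: showing that ``no common non-trivial direct summand'' forces incompatibility for this class of abelian groups.

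The main obstacle, then, is proving incompatibility directly from the decomposition of $H$ together with the absence of a shared summand, without the luxury of the maximal and minimal conditions on normal subgroups (which is what Theorem \ref{ThComp} required and what we are deliberately trying to avoid here). Concretely, I must show that if $\sigma\in\Hom(H,K)$ and $\tau\in\Hom(K,H)$ and there is a non-trivial $h\in H$ with $\tau\sigma(h)=h$, then $H$ and $K$ share a non-trivial direct summand, contradicting the hypothesis. In the abelian setting every endomorphism is automatically normal and images are automatically central, so those side conditions evaporate and I may work purely with homomorphisms of abelian groups. The idea is to exploit the fixed point: $h$ generates a cyclic or (after passing to a pure, bounded, hence splitting, subgroup as in the argument of Lemma \ref{AbIncTotInc}) finite direct summand $\gen{a}$ of $H$ on which $\tau\sigma$ acts as the identity, so $\sigma$ restricted to an appropriate pure summand is injective and its image splits off $K$, yielding an isomorphic summand shared by $H$ and $K$. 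The free part $F$ must be dealt with first: if $H$ is infinite and $F\neq0$, the absence of a common summand forces $K$ to have no infinite cyclic quotient, killing $\Hom$ into the free direction, exactly as at the start of Lemma \ref{AbIncTotInc}; this lets me reduce to $F=0$ and then to the finite-exponent and divisible pieces separately, using that divisible subgroups split off and that $\Hom(D,\psi(D))$ behaves well.

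In short, once the reduction to $F=0$ and then to bounded-versus-divisible pieces is carried out, the splitting lemmas for pure bounded subgroups and for divisible subgroups turn a non-trivial fixed point of $\tau\sigma$ into a genuine common direct summand, contradicting the hypothesis; hence $H$ and $K$ are incompatible, hence totally incompatible by Lemma \ref{AbIncTotInc}, hence $\aut(H\oplus K)\simeq\mathcal{A}_{H,K}$ by Proposition \ref{htotincompatible}. I expect the bookkeeping around the free summand and around matching the orders of the cyclic summands $\gen{a}$ and $\gen{b}$ to be the delicate part, since this is precisely where the structural hypothesis on $H$ is consumed.
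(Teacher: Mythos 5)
Your proposal is correct and follows essentially the same route as the paper: necessity from a common summand producing a failure of the identification (the paper builds the explicit non-automorphism $\left(\begin{smallmatrix}1&-\beta\\-\gamma&1\end{smallmatrix}\right)$ in $\mathcal{A}_{H,K}$, while you cite Lemma \ref{LemComm}, exactly as the paper itself does in Theorem \ref{stem}), and sufficiency by proving incompatibility via the reduction to $F=0$, the splitting of divisible subgroups, and pure bounded cyclic summands $\gen{a}$, $\gen{\psi(a)}$ yielding a common summand, then upgrading through Lemma \ref{AbIncTotInc} and concluding with Proposition \ref{htotincompatible}. No substantive divergence from the paper's argument.
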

\begin{proof}
First, let $H_1$ and $K_1$ be isomorphic non-trivial direct summands of $H$ and $K$, respectively, write $H=H_1\oplus H_2$ and $K=K_1\oplus K_2$ and let $\theta:H_1\mapsto K_1$ be an isomorphism. For any couple of groups $X$ and $Y$ such that $Y\leq X$, let $\iota_{Y,X}$ be the immersion of $Y$ in $X$ and $\pi_{X,Y}$ be the canonical projection of $X$ onto $X/Y$. For the sake of simplicity, put $H_i=H/H_j$ and $K_i=K/K_j$ for $i,j\in\{1,2\}$ and $i\neq j$. Put now
$\beta=\iota_{H_1,H}\theta^{-1}\pi_{K,K_2}$, $\gamma=\iota_{K_1,K}\theta\pi_{H,H_2}$
and take into account the endomorphism of $H\oplus K$ $$\varphi=\left(\begin{array}{cc}1 &-\beta \\ -\gamma & 1\end{array}\right).$$ Let $h$ be a non-trivial element of $H_1$. Therefore, $\varphi(h,\theta(h))=(0,0)$, so that $\mathcal{A}_{H,K}$ is not a subset of $\aut(H\oplus K)$ and this proves the necessary condition.

Assume now that $H$ and $K$ share no non-trivial direct summand. We want to show that $H$ and $K$ are incompatible and hence we may assume that the torsion subgroup of $H$ is a $p$-group for a prime $p$. To this aim, let $\psi$ and $\chi$ be elements of $\Hom(H,K)$ and of $\Hom(K,H)$, respectively, and assume for a contradiction that we may find a non-trivial $h$ of $H$ such that $\chi\psi(h)=h$. Put $k=\psi(h)$. By this condition, it follows in particular that $h$ and $k$ have the same order. Notice moreover that $K$ has no infinite cyclic quotient, so that in particular $h$ and $k$ are torsion elements and we may assume without loss of generality that $F=0$. Since abelian groups always split over divisible subgroups, neither $h$ nor $k$ belong to such subgroups, otherwise $H$ and $K$ would have a common non-trivial direct factor. In particular, we may assume that both $H$ and $K$ are periodic and reduced.
Since $h$ is not contained in a divisible subgroup, we may find a subgroup $\gen{a}$ of $H$ which has maximal order among the cyclic subgroups containing $h$. In particular, $\gen{a}$ is a pure bounded subgroup of $H$ and hence it is a direct summand. Since $h$ and $k$ have the same order, $a$ and $\psi(a)$ have the same order, too. Suppose that $K$ contains an element $b$ such that $pb=\psi(a)$. Since $k$ and $\chi(k)$ have the same order, then also $b$ and $\chi(b)$ have the same order, which means that $|\gen{\chi(b)}|=p|\gen{a}|$ and this goes against the maximality of $|\gen{a}|$. This shows that $\gen{\psi(a)}$ is a pure subgroup of $K$ and hence it is even a direct summand of $K$. However, it is isomorphic with $\gen{a}$, which shares the same property in $H$. This contradiction yields that $h=1$ and hence $H$ and $K$ are incompatible. The thesis now follows from Lemma \ref{AbIncTotInc} and from Proposition \ref{htotincompatible}.
\end{proof}

Example \ref{Ex1} shows that the same cannot be said in general if both $H$ and $K$ are torsion-free abelian groups.

\subsection{Stem groups}\label{SubStem}

Following P. Hall \cite{Hall}, a group is called a \textit{stem group} if its centre lies inside its derived subgroup. Stem groups are importantly related with the classification of finite $p$-groups given by Hall himself and with groups extensions. As far as we are concerned, stem groups are directly linked with central compatibility in the following sense: Let $H$ be a group, let $K$ be a stem group, let $\alpha\in\Hom(H,Z(K))$ and let $\beta\in\Hom(K,Z(H))$. Then it is clear that $\alpha\beta$ is trivial, so that in particular $H$ and $K$ are centrally incompatible. More generally, one could say that if one between $H$ and $K$ is a stem group, then the pair $(H,K)$ is centrally totally incompatible of length $1$, meaning that for every $\alpha\in\Hom(H,Z(K))$ and $\beta\in\Hom(K,Z(H))$ either $\alpha\beta=0$ or $\beta\alpha=0$. However, this goes beyond the scopes of this paper.

In analogy with the abelian case portrayed above, one may also ask if for any pair of stem groups, being incompatible and totally incompatible are equivalent. In general this is not the case, as it is shown by Example \ref{Ex5}. However, it will be now proved that in case one between $H$ and $K$ is a stem group, the conditions on normal subgroups of $H\times K$ can be removed, so that we will not need the pair to be totally incompatible to get good results on $\aut(H\times K)$.

\begin{prop}\label{PropStem} Let $H$ and $K$ be groups. If $H$ is a stem group, then $\mathcal{A}_{H,K}$ is a subgroup of $\aut (H\times K)$.
\end{prop}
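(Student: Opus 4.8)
The plan is to verify the criterion of Proposition \ref{propsub}. So I fix $\lambda\in\aut H$, $\mu\in\Hom(H,Z(K))$, $\nu\in\aut K$ and $\xi\in\Hom(K,Z(H))$, and aim to show that $\lambda+\xi\mu$ and $\nu+\mu\xi$ are bijective. The whole argument rests on a single observation made available by the stem hypothesis: since $\im\mu$ is abelian, $\mu$ annihilates $H'$, and because $H$ is a stem group we have $Z(H)\leq H'$; hence $\mu(Z(H))=1$. As $\im\xi\leq Z(H)$, this forces $\mu\xi=0$.

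The map $\nu+\mu\xi$ is then dealt with for free: it equals $\nu$, which is an automorphism of $K$. The real content lies in $\lambda+\xi\mu$, where $\xi\mu$ need not vanish. Setting $\theta=\xi\mu$, I would first record that $\theta$ is an endomorphism of $H$ with $\im\theta\leq Z(H)$, that $\theta(H')=\xi(\mu(H'))=1$, and that $\theta$ is square-zero, since $\theta^2=\xi(\mu\xi)\mu=0$. Combining $\im\theta\leq Z(H)\leq H'$, the $\lambda$-invariance $\lambda^{-1}(H')=H'$, and $\theta(H')=1$, I also obtain the sharper relation $\theta\lambda^{-1}\theta=0$.

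Now put $\rho=\lambda^{-1}\theta$. Since $\lambda^{-1}$ fixes $Z(H)$ setwise, $\im\rho\leq Z(H)$, so $1+\rho$ and $1-\rho$ are genuine endomorphisms of $H$; moreover $\rho^2=\lambda^{-1}(\theta\lambda^{-1}\theta)=0$, and a direct check using the centrality of $\im\rho$ gives $(1+\rho)(1-\rho)=1=(1-\rho)(1+\rho)$. Thus $1+\rho\in\aut H$, and since $\lambda+\xi\mu=\lambda(1+\rho)$ is a composite of automorphisms of $H$, it is bijective. With both $\lambda+\xi\mu$ and $\nu+\mu\xi$ bijective, Proposition \ref{propsub} yields that $\mathcal{A}_{H,K}$ is a subgroup of $\aut(H\times K)$.

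I expect the only delicate point to be the treatment of $\lambda+\xi\mu$: unlike the $K$-side, the endomorphism $\xi\mu$ is not trivial, and the device that saves the day is its square-zero property $(\xi\mu)^2=0$, inherited from $\mu\xi=0$, which turns $1+\lambda^{-1}\xi\mu$ into an explicitly invertible map with inverse $1-\lambda^{-1}\xi\mu$. Signs are immaterial throughout, since the images of $\mu$ and $\xi$ are central, exactly as noted in the proof of Theorem \ref{incompatible}.
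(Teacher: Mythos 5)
Your proposal is correct and follows essentially the same route as the paper: both proofs rest on the observation that $\im\xi\leq Z(H)\leq H'\leq\ke\mu$ forces $\mu\xi=0$, so that one of the two maps in Proposition \ref{propsub} is just $\nu$, while the other differs from an automorphism by a square-zero central endomorphism and is inverted explicitly. Your packaging of that last step as $\lambda+\xi\mu=\lambda(1+\rho)$ with $(1+\rho)^{-1}=1-\rho$ is a tidy (and notationally cleaner) rendering of the element-wise injectivity/surjectivity check carried out in the paper.
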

\begin{proof}
Take $\lambda\in\aut H$, $\mu\in \Hom(H,Z(K))$, $\nu\in\aut K$ and $\xi\in \Hom(K,Z(H))$ Since $\im\xi\leq Z(H)\leq H'\leq \ke\mu$, we have that $\mu\xi=0$. Notice that this holds for any couple of such homomorphisms and in particular also for $\mu(\nu^{-1}\xi)$. If $x$ is an element of $K$ such that $\nu-\xi\mu(x)=1$, then $x=\nu^{-1}\xi\mu(x)=(\nu^{-1}\xi\mu)^2(x)=(\nu^{-1}\xi(\mu\nu^{-1}\xi)\mu)(x)=1$ and hence $\nu-\xi\mu$ is injective. On the other hand, if $y$ is any element of $K$, then it is straightforward to check that $$\nu-\xi\mu(\nu^{-1}\xi\mu\nu^{-1}(y)\nu^{-1}(y))=y,$$ so $\nu-\xi\mu$ is bijective. As clearly $\lambda+\mu\xi=\lambda$,we may apply Proposition \ref{propsub} to get our thesis.
\end{proof}

We are now ready to prove the following theorem, which shows that if one between $H$ and $K$ is a stem group (and obviously if $H$ and $K$ share no non-trivial direct factor), then $\mathcal{A}_{H,K}$ is the whole group of automorphisms of $H\times K$. In particular, the determinant is defined for every automorphism of the direct products of two groups, provided that at least one of them is a stem group and that they share no non-trivial direct factor.

\begin{theorem}\label{stem} Let $H$ and $K$ be groups. If $H$ is a stem group, then $\aut(H\times K)\simeq \mathcal{A}_{H,K}$ if and only if $H$ and $K$ have no common non-trivial direct factor.
\end{theorem}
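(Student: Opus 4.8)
The plan is to prove the two implications separately, reading the asserted isomorphism, as everywhere in this paper, as the equality $\aut(H\times K)=\mathcal{A}_{H,K}$ of subsets of $\mathcal{M}_{H,K}$. For necessity I would argue by contraposition: if $H$ and $K$ had a common non-trivial direct factor, then Lemma~\ref{LemComm} already exhibits an automorphism of $H\times K$ lying outside $\mathcal{A}_{H,K}$, so the two sets cannot coincide. For sufficiency, assume $H$ is a stem group and that $H$ and $K$ share no non-trivial direct factor. Here one inclusion is free: by Proposition~\ref{PropStem} the stem hypothesis alone gives $\mathcal{A}_{H,K}\le\aut(H\times K)$. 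So the whole problem reduces to the reverse inclusion $\aut(H\times K)\subseteq\mathcal{A}_{H,K}$.

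Fix $\varphi=\left(\begin{smallmatrix}\alpha&\beta\\\gamma&\delta\end{smallmatrix}\right)\in\aut(H\times K)$ with inverse $\left(\begin{smallmatrix}\alpha'&\beta'\\\gamma'&\delta'\end{smallmatrix}\right)$. By Lemma~\ref{norm}(1), once $\alpha$ and $\delta$ are known to be surjective we immediately obtain $\im\beta\le Z(H)$ and $\im\gamma\le Z(K)$, i.e.\ $\varphi\in\mathcal{A}_{H,K}$; so it suffices to prove that $\alpha$ and $\delta$ are \emph{bijective}. Following the pattern of the converse in Proposition~\ref{htotincompatible}, I would set $\theta=\beta'\gamma$ and $\eta=\beta\gamma'$, which are normal endomorphisms of $H$ by Lemma~\ref{norm}(2), and use Lemma~\ref{cases}(2.1) and (1.1), which give $\alpha'\alpha=1-\theta$ and $\alpha\alpha'=1-\eta$. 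Thus injectivity of $\alpha$ follows from injectivity of $1-\theta$ and surjectivity of $\alpha$ from surjectivity of $1-\eta$; the symmetric relations on the $K$-side, namely Lemma~\ref{cases}(2.4) and (1.4), reduce the bijectivity of $\delta$ to the analogous statements for $\gamma'\beta$ and $\gamma\beta'$. A clean way to attack injectivity uses the no-common-factor hypothesis directly: writing $N=\ke\alpha\trianglelefteq H$, for $h\in N$ one has $\varphi(h,1)=(\alpha(h),\gamma(h))=(1,\gamma(h))$, so $\varphi(N\times 1)=1\times\gamma(N)$. Since $\varphi$ is an automorphism and $N\times1\trianglelefteq H\times K$, this yields $N\cong\gamma(N)$ with $\gamma(N)\trianglelefteq K$, and if $N$ could be recognised as a direct factor of $H$ (and $\gamma(N)$ as one of $K$) the hypothesis would force $N=1$.

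The hard part will be exactly this upgrading — passing from the fixed-point information ``$\alpha(h)=1\Rightarrow h=\theta(h)$'' to ``$h=1$'', and then to surjectivity — \emph{without} any chain condition on normal subgroups, and this is precisely where the stem hypothesis must replace total incompatibility. The mechanism I expect to exploit is the one underlying Proposition~\ref{PropStem}: any homomorphism from $H$ into an abelian group annihilates $H'\supseteq Z(H)$, so a composite of a map \emph{into} $Z(H)$ followed by a map \emph{out of} $H$ into a centre vanishes, just as $\mu\xi=0$ is used there. Concretely, I would aim to show that the no-common-factor hypothesis confines the interaction $\beta'\gamma$ (respectively $\gamma\beta'$) to central images, so that the fixed point $h=\theta(h)$ actually lies in $Z(H)\le H'$; the inclusion $Z(H)\le H'$ then makes the relevant second iterate of $\theta$ trivial, giving $h=\theta^2(h)=1$, while an explicit preimage of the type appearing in Proposition~\ref{PropStem} delivers surjectivity of $1-\eta$, and the symmetric run on $K$ handles $\delta$. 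The genuinely delicate point, and the one on which I would spend most of the effort, is justifying that absence of a common non-trivial direct factor really does force these composites to land in the centres in the absence of finiteness hypotheses, since it is this centrality that licenses the stem collapse and thereby removes the reliance on total incompatibility used in Proposition~\ref{htotincompatible}.
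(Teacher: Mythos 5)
Your reduction is sound as far as it goes: necessity via Lemma~\ref{LemComm}, the inclusion $\mathcal{A}_{H,K}\subseteq\aut(H\times K)$ via Proposition~\ref{PropStem}, and the observation that everything comes down to proving $\alpha$ and $\delta$ bijective (after which Lemma~\ref{norm}(1) places $\im\beta$ and $\im\gamma$ in the centres) all match the paper's proof. The genuine gap is that the core of the argument --- surjectivity of $\alpha$ --- is left as a plan, and the mechanism you propose for it is circular. You want to show $\im\beta'\gamma\le Z(H)$ and then invoke the stem collapse to get $(\beta'\gamma)^2=0$; but every route to these facts passes through a surjectivity statement of the kind in question: $\im\beta'\le Z(H)$ comes from Lemma~\ref{norm}(1) applied to $\varphi^{-1}$ and so needs $\alpha'$ surjective, while killing the second iterate needs either $\gamma(H')=1$ (i.e.\ $\im\gamma\le Z(K)$, i.e.\ $\delta$ surjective, and $K$ is not assumed to be stem) or $\beta'(K')=1$ (again $\alpha'$ surjective). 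Likewise your observation that $\varphi(\ke\alpha\times1)=1\times\gamma(\ke\alpha)$ only produces isomorphic \emph{normal} subgroups of $H$ and $K$, and you rightly flag that upgrading these to direct factors is unproven; that upgrade is exactly the missing idea.

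The paper fills this gap with a Fitting-type decomposition that your sketch does not contain. Setting $\sigma=\alpha\alpha'$ and $\tau=\beta\gamma'$, so that $\sigma+\tau=1$ and $[\im\sigma,\im\tau]=1$, it proves $\sigma\tau=\tau\sigma$, analyses $I=\im\sigma\cap\im\tau$ (central, hence inside $Z(H)=H'$), splits $I=I_1\times I_2$ with $I_1=(\im\tau)'\cap\im\sigma$ and $I_2=(\im\sigma)'\cap\im\tau$, and uses the restrictions $\sigma^2|_{\im\tau}$ and $\tau^2|_{\im\sigma}$ to obtain a genuine direct decomposition $H=H_1\times H_2$ with $\tau$ acting bijectively on $H_2$. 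If $\alpha$ were not surjective then $H_2\neq1$, and $K_1=\gamma'(H_2)$ is shown --- using $\gamma'\beta+\delta'\delta=1$, $[\im\gamma',\im\delta']=1$ and an explicit retraction of $K$ onto $K_1$ --- to be a direct factor of $K$ isomorphic to the direct factor $H_2$ of $H$: the explicit common direct factor contradicting the hypothesis. Only once $\alpha$ is surjective does one get $\im\beta\le Z(H)\le H'$, hence $\tau^2=0$ and $(\beta'\gamma)^2=0$, and from there injectivity of $\alpha$ and bijectivity of $\delta$ in essentially the order you anticipated. Without a construction of this kind your outline does not close.
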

\begin{proof}
Necessity follows from Lemma \ref{LemComm}, so assume that $H$ and $K$ share no common direct factor. Since Proposition \ref{PropStem} gives us that $\mathcal{A}_{H,K}\subseteq\aut (H\times K)$, we just have to prove the reverse inclusion. Let hence $$\varphi=\left(\begin{array}{cc}\alpha &\beta \\ \gamma & \delta\end{array}\right)\quad\text{and}\quad\varphi^{-1}=\left(\begin{array}{cc}\alpha' &\beta' \\ \gamma' & \delta'\end{array}\right)$$ be automorphisms of $H\times K$ and put $\sigma=\alpha\alpha'$, $\tau=\beta\gamma'$, $I_\sigma=\im\sigma$, $I_\tau=\im\tau$ and $I=I_\sigma\cap I_\tau$. The following two facts, respectively derived from Lemma \ref{cases} (1.1) and Proposition \ref{end}, will be used without further reference: $1=\sigma+\tau$ and $[I_\sigma, I_\tau]=1$. First, we immediately get that $H=I_\sigma I_\tau$ and that $I$ is central. If $h=\tau(x)$ is any element of $I_\tau$, then $\tau(x)=\sigma(h)\tau(h)$, so that $\sigma(h)=\tau(xh^{-1})$ and $\sigma(I_\tau)$ is contained in $I$. Symmetrically the same holds for $\tau(I_\sigma)$. As $I$ is central, it follows in particular that $I'_\sigma\leq \ke\tau$ and $I'_\tau\leq\ke\sigma$. Let $x$ be any element of $H$. Then $\sigma(x)=\sigma^2(x)\tau\sigma(x)$. On the other hand, $\sigma(x)=\sigma(\sigma(x)\tau(x))=\sigma^2(x)\sigma\tau(x)$, so that we have $\sigma\tau=\tau\sigma$.
Also, $I\leq Z(H)=H'=I'_\sigma I'_\tau\leq \ke\tau\ke\sigma$.

Notice here that, for any $h$ in $\ke\sigma\cap\ke\tau$, $h=\sigma(h)\tau(h)=1$, so that $\ke\sigma\cap\ke\tau=1$. Put now $I_1=I'_\tau\cap I_\sigma$ and $I_2=I'_\sigma\cap I_\tau$. From what just shown, it follows in particular that $I_1\cap I_2=1$. Since $I\leq I'_\sigma I'_\tau$, by Dedekind's Law we have that $I=I\cap I'_\sigma I'_\tau=I_\sigma\cap(I_\tau\cap I'_\sigma I'_\tau)=I_\sigma\cap (I_2I'_\tau)=I_1\times I_2$. Take now into account the restriction of $\sigma^2$ to $I_\tau$. By the structure of $I$, we have that the image of $\sigma^2|_{I_\tau}$ lies inside $I_2$. If $x,y$ and $z$ are elements of $H$ such that $\sigma([x,y])=\tau(z)$, then $\tau(z)$ is an element of $I'_\sigma\leq\ke\tau$ and hence $\tau(z)=\sigma\tau(z)\tau^2(z)=\sigma\tau(z)=\sigma^2([x,y])$. This shows that $\sigma^2_{|I_\tau}$ acts as the identity on $I_2$, which is hence its image. Therefore, $I_\tau$ splits over $I_2$ and we may write $I_\tau=H_2\times I_2$, where $H_2=\ke \sigma^2_{|I_\tau}$. Making use of $\tau^2_{|I_\sigma}$, we similarly write $I_\sigma=H_1\times I_1$, where $H_1=\ke\tau^2_{|I_\sigma}$. It follows in particular that $I'_\sigma=H_1'$, that $I'_\tau=H_2'$ and that $H=H_1\times H_2$.
Also, if $h$ is an element of $\ke\tau\cap H_2$, then $h=\sigma(h)$ and so $h=\sigma^2(h)=1$. On the other hand, as $\sigma$ and $\tau$ commute, for any $x$ in $H_2$ we have that $\sigma^2\tau(x)=\tau\sigma^2(x)=1$.
Therefore, $\tau$ acts as a bijection on $H_2$. Moreover, as $H$ is a stem group, both $H_1$ and $H_2$ are stem groups. Assume now for a contradiction that $\alpha$ is not surjective, so that neither $\sigma$ is such. Then $I_\tau$ is not contained in $I_\sigma$ and hence $H_2$ is not trivial. Put $\gamma'(H_2)=K_1$, $\phi:x\in H_2\mapsto\gamma'(x)\in K_1$ and $\chi:x\in K_1\mapsto\beta(x)\in H_2$. The fact that $\tau$ acts as a bijection on $H_2$ is equivalent to say that $\chi\phi$ is bijective. On the other hand, also $\phi$ is bijective and hence so is $\chi$. Let $\pi$ be the canonical projection of $H$ onto $H_2$. Then $\pi\beta(K)=H_2$
and $\gamma'\pi\beta$ is an epimorphism of $K$ onto $K_1$, while clearly $(\gamma'\pi\beta)(\phi\chi)^{-1}=1_{|K_1}$. This provides a splitting of $K=K_1\ltimes K_2$ for a subgroup $K_2$ of $K$. Now, using the facts that $\gamma'\beta+\delta'\delta=1$ and that $[\im\gamma',\im\delta']=1$, we also get that $K_1$ is a direct factor of $K$. However, $K_1$ is isomorphic to $H_2$ because $\phi$ is bijective and this is not possible. This contradiction arises from our assumption on $\alpha$ being not surjective. Thus $\alpha$ is surjective. In particular, again from the structure of $\mathcal{M}_{H,K}$, $\beta$ belongs to $\Hom(K,Z(H))$. Then $I_\tau\leq Z(H)$ and hence $\tau^2=0$ since $Z(H)\leq H'$. Moreover, the exact same proof with $\alpha'\alpha$ and $\beta'\gamma$ in place of $\sigma$ and $\tau$, respectively, can be employed to show that also $(\beta'\gamma)^2=0$. If now $h$ is an element of $\ke\alpha$, then $h=\alpha'\alpha(h)\beta'\gamma(h)=\beta'\gamma(h)=(\beta'\gamma)^2(h)=1$. Thence $\alpha$ is an automorphism. 

Finally, let $\psi=\gamma'\beta$. Since $\tau^2=0$, we get that $\psi^3=0$. Now, $\delta'\delta=1-\psi$. Let $k$ be an element of $\ke\delta'\delta$. Then $k=\psi(k)=\psi^2(k)=\psi^3(k)=1$, so that $\delta'\delta$ is injective. On the other hand, if $k$ is any element of $K$, it is immediate to see that $1-\psi(k\psi(k)\psi^2(k))=k$ and hence $\delta'\delta$ is an automorphism of $K$. As also $(\beta'\gamma)^2=0$, then $(\gamma\beta')^3=0$, so that, by the same argument, we get that $\delta\delta'$ is bijective. It follows that also $\delta$ is bijective and hence, in particular, $\gamma$ is an element of $\Hom(H,Z(K))$. This shows that $\varphi$ is an element of $\mathcal{A}_{H,K}$ and our proof is concluded.
\end{proof}

\begin{theorem}\label{ThCompStem}
Let $H$ and $K$ be stem groups with no common non-trivial direct factor. Then $$\aut (H\times K)\simeq(\aut H\times\aut K)\ltimes(\Hom(H,Z(K))\times\Hom(K,Z(H))).$$
\end{theorem}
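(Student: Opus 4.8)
The plan is to first reduce the statement to a description of $\mathcal{A}_{H,K}$ and then to split off an abelian normal subgroup. Since $H$ is a stem group and $H$ and $K$ have no common non-trivial direct factor, Theorem \ref{stem} yields $\aut(H\times K)\simeq\mathcal{A}_{H,K}$, so it suffices to prove that $\mathcal{A}_{H,K}$ itself is the asserted semidirect product. The crucial simplification, and the only place where I use that \emph{both} factors are stem, is that all cross terms vanish: for every $\beta\in\Hom(K,Z(H))$ and $\gamma\in\Hom(H,Z(K))$ one has $\beta\gamma=0$ and $\gamma\beta=0$. Indeed $\im\gamma\leq Z(K)\leq K'\leq\ke\beta$, because $K$ is stem and $\beta$ kills $K'$, and symmetrically $\im\beta\leq Z(H)\leq H'\leq\ke\gamma$, because $H$ is stem, exactly as in the remark preceding Proposition \ref{PropStem}.

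Next I would introduce the two obvious subsets of $\mathcal{A}_{H,K}$: the set $D$ of diagonal matrices $\left(\begin{smallmatrix}\alpha&0\\0&\delta\end{smallmatrix}\right)$ with $\alpha\in\aut H$ and $\delta\in\aut K$, and the set $N$ of matrices $\left(\begin{smallmatrix}1&\beta\\\gamma&1\end{smallmatrix}\right)$ with $\beta\in\Hom(K,Z(H))$ and $\gamma\in\Hom(H,Z(K))$. Computing the row-by-column product and discarding the (now trivial) cross terms, the product of two diagonal matrices is $\left(\begin{smallmatrix}\alpha_1\alpha_2&0\\0&\delta_1\delta_2\end{smallmatrix}\right)$, so $D$ is a subgroup isomorphic to $\aut H\times\aut K$; while the product of two elements of $N$ collapses to $\left(\begin{smallmatrix}1&\beta_1+\beta_2\\\gamma_1+\gamma_2&1\end{smallmatrix}\right)$, so $N$ is a subgroup, it is abelian, it has inverses given by $(\beta,\gamma)\mapsto(-\beta,-\gamma)$, and $N\simeq\Hom(K,Z(H))\times\Hom(H,Z(K))$ as abelian groups (equivalently $\Hom(H,Z(K))\times\Hom(K,Z(H))$, as in the statement).

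It then remains to assemble the semidirect product. From the factorization $\left(\begin{smallmatrix}\alpha&\beta\\\gamma&\delta\end{smallmatrix}\right)=\left(\begin{smallmatrix}\alpha&0\\0&\delta\end{smallmatrix}\right)\left(\begin{smallmatrix}1&\alpha^{-1}\beta\\\delta^{-1}\gamma&1\end{smallmatrix}\right)$, which is legitimate because automorphisms preserve the centre (so $\alpha^{-1}\beta\in\Hom(K,Z(H))$ and $\delta^{-1}\gamma\in\Hom(H,Z(K))$), I get $\mathcal{A}_{H,K}=DN$, while $D\cap N=1$ is immediate. A direct conjugation computation gives that $\left(\begin{smallmatrix}\alpha&0\\0&\delta\end{smallmatrix}\right)$ sends $\left(\begin{smallmatrix}1&\beta\\\gamma&1\end{smallmatrix}\right)$ to $\left(\begin{smallmatrix}1&\alpha\beta\delta^{-1}\\\delta\gamma\alpha^{-1}&1\end{smallmatrix}\right)$, which still lies in $N$; hence $D$ normalizes $N$, and since $N$ is abelian it follows that $N\trianglelefteq\mathcal{A}_{H,K}$. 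Therefore $\mathcal{A}_{H,K}=D\ltimes N$ with $D\simeq\aut H\times\aut K$ acting on $N\simeq\Hom(H,Z(K))\times\Hom(K,Z(H))$ by $(\gamma,\beta)\mapsto(\delta\gamma\alpha^{-1},\alpha\beta\delta^{-1})$, which is exactly the asserted isomorphism.

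I do not expect a genuine obstacle here: once the cross-term vanishing is in place the argument is essentially bookkeeping. The one point that must be handled with care, and the reason the hypothesis asks that \emph{both} $H$ and $K$ be stem rather than just one, is precisely that both $\beta\gamma$ and $\gamma\beta$ vanish, since this is what simultaneously makes $D$ multiply diagonally, makes $N$ abelian and closed, and keeps conjugation inside $N$. Checking that the pre- and post-composed maps $\alpha^{-1}\beta$, $\alpha\beta\delta^{-1}$, $\delta\gamma\alpha^{-1}$ and the like remain homomorphisms into the correct centres is routine, using only that automorphisms fix $Z(H)$ and $Z(K)$ setwise.
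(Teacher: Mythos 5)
Your proof is correct and follows essentially the same route as the paper: reduce to $\mathcal{A}_{H,K}$ via Theorem \ref{stem}, observe that $\beta\gamma=0=\gamma\beta$ because \emph{both} groups are stem, and deduce the semidirect decomposition. The only difference is that you redo by hand the bookkeeping (the factorization $\mathcal{A}_{H,K}=DN$, $D\cap N=1$, and $D$ normalizing $N$) that the paper outsources to Proposition \ref{AStruc}, after which it only needs to check $[U,L]=1$ -- which is exactly your cross-term vanishing with $N=UL$.
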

\begin{proof}
First, $\aut(H\times K)$ is isomorphic with $\mathcal{A}_{H,K}$ by Theorem \ref{stem}. Let $U$ and $L$ be the subgroups of upper and lower unitriangular matrices of $\mathcal{A}_{H,K}$, respectively. From Proposition \ref{AStruc}, we have that diag$\mathcal{A}_{H,K}\simeq \aut H\times\aut K$ normalizes both $U$ and $L$, so the only thing to show is that $[U,L]=1$. To this aim, let $\varphi=\left(\begin{array}{cc}1 &\beta \\ 0 & 1\end{array}\right)$ and $\chi=\left(\begin{array}{cc}1 &0 \\ \gamma & 1\end{array}\right)$ be two elements of $\mathcal{A}_{H,K}$. Since by hypothesis $Z(H)\leq H'\leq \ke\gamma$ and $Z(K)\leq K'\leq\ke\beta$, we have that both $\beta\gamma$ and $\gamma\beta$ are zero homomorphisms and this shows that $[\varphi,\chi]=1$, which yields the thesis.
\end{proof}

We remark here that the condition that both $H$ and $K$ are stem groups cannot be removed from last theorem. Explicitly, if $H$ is a quaternion group of order $8$, $K$ is quotient of order $2$ of $H$, $\beta$ the monomorphism of $K$ into $Z(H)$ and $\gamma$ is the canonical projection of $H$ onto $K$, then the automorphisms $\left(\begin{array}{cc}1 &\beta \\ 0 & 1\end{array}\right)$ and $\left(\begin{array}{cc}1 &0 \\ \gamma & 1\end{array}\right)$ of $H\times K$ are easily seen not to commute.

\subsection{Central automorphisms}

Let $G$ be a group. An automorphism of $G$ is said to be \textit{central} if it acts as the identity over $G/Z(G)$. The subgroup of central automorphisms of $G$ is usually denoted with $\aut_c(G)$. Central automorphisms are widely studied and it is still an open question how to usefully characterise the structure of (at least finite) groups all of whose automorphisms are central. Here we give a description of $\aut_c(G)$ in case $G$ is a direct product of two groups of the kind studied in some previous result.

To this aim, for any couple of groups $H$ and $K$ we define the following set (see \cite{BidCurMcC}) $$\mathcal{Z}_{H,K}=\left\{\left(\begin{array}{cc}\alpha &\beta \\ \gamma & \delta\end{array}\right):\begin{array}{ll} \alpha\in \aut_c H, &\beta\in \Hom(K,Z(H)) \\
\gamma\in \Hom(H,Z(K)), &\delta\in \aut_c K
\end{array}
\right\}$$

\begin{cor}\label{ThCompAutc}
Let $H$ and $K$ be groups satisfying one of the following conditions:
\begin{itemize}
\item[(1)] $H$ and $K$ are abelian and $H=D\oplus B\oplus F$, where $D$ is divisible, $B$ has finite exponent and $F$ is free;
\item[(2)] $H$ or $K$ is a stem group;
\item[(3)] $H$ and $K$ satisfy the maximal and the minimal conditions on normal subgroups.
\end{itemize}
If $H$ and $K$ have no common non-trivial direct factors, then $\aut_c (H\times K)\simeq\mathcal{Z}_{H,K}$.
\end{cor}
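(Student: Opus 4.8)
The plan is to combine the three cited theorems, each of which pins down the full automorphism group, with a short computation that isolates the central automorphisms inside it. Indeed, each of the hypotheses (1), (2) and (3), together with the assumption that $H$ and $K$ share no common non-trivial direct factor, is precisely the setting in which Theorem \ref{PropAb}, Theorem \ref{stem} and Theorem \ref{ThCompAut} respectively yield $\aut(H\times K)\simeq\mathcal{A}_{H,K}$; here the isomorphism is the restriction of the canonical identification $\End(H\times K)\simeq\mathcal{M}_{H,K}$ of Proposition \ref{end} (and in case (2) one uses the symmetry of the construction, namely $\aut(H\times K)=\aut(K\times H)$ and $\mathcal{A}_{H,K}\simeq\mathcal{A}_{K,H}$, to reduce to the case where $H$ itself is the stem group). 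Thus in every case it suffices to single out, among the matrices of $\mathcal{A}_{H,K}$, those whose associated automorphism of $H\times K$ is central, and then to show that this subset is exactly $\mathcal{Z}_{H,K}$.

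For the key step, recall that $Z(H\times K)=Z(H)\times Z(K)$. Take $\varphi=\left(\begin{array}{cc}\alpha &\beta \\ \gamma & \delta\end{array}\right)\in\mathcal{A}_{H,K}$ and an element $g=(h,k)$ of $H\times K$. Since $\varphi(g)=(\alpha(h)\beta(k),\gamma(h)\delta(k))$, one computes $g^{-1}\varphi(g)=(h^{-1}\alpha(h)\beta(k),\,k^{-1}\gamma(h)\delta(k))$. Now $\beta(k)\in Z(H)$ and $\gamma(h)\in Z(K)$ by the very definition of $\mathcal{A}_{H,K}$, so the first coordinate lies in $Z(H)$ if and only if $h^{-1}\alpha(h)\in Z(H)$, and the second lies in $Z(K)$ if and only if $k^{-1}\delta(k)\in Z(K)$. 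Hence $\varphi$ is a central automorphism of $H\times K$ if and only if $h^{-1}\alpha(h)\in Z(H)$ for all $h\in H$ and $k^{-1}\delta(k)\in Z(K)$ for all $k\in K$, that is, if and only if $\alpha\in\aut_c H$ and $\delta\in\aut_c K$. These are exactly the defining conditions of $\mathcal{Z}_{H,K}$, the remaining requirements $\beta\in\Hom(K,Z(H))$ and $\gamma\in\Hom(H,Z(K))$ being automatic from membership in $\mathcal{A}_{H,K}$.

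To conclude, note that $\mathcal{Z}_{H,K}\subseteq\mathcal{A}_{H,K}=\aut(H\times K)$, so every element of $\mathcal{Z}_{H,K}$ is genuinely an automorphism; the previous paragraph shows that the image of the subgroup $\aut_c(H\times K)$ under the isomorphism $\aut(H\times K)\simeq\mathcal{A}_{H,K}$ is precisely $\mathcal{Z}_{H,K}$. Restricting a group isomorphism to a subgroup yields an isomorphism onto its image, whence $\aut_c(H\times K)\simeq\mathcal{Z}_{H,K}$.

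I do not expect a serious obstacle: the substance is carried entirely by the three earlier theorems, and what remains is the short centrality computation above. The only point demanding care is checking that \emph{central as an automorphism of the product} translates correctly into the component conditions, and in particular that the off-diagonal maps, which already land in the respective centres, do not interfere with this translation -- which is exactly why the identity $Z(H\times K)=Z(H)\times Z(K)$ and the membership conditions of $\mathcal{A}_{H,K}$ are invoked together.
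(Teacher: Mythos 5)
Your proposal is correct and follows essentially the same route as the paper: invoke Theorems \ref{PropAb}, \ref{stem} and \ref{ThCompAut} to identify $\aut(H\times K)$ with $\mathcal{A}_{H,K}$, then check centrality componentwise using $Z(H\times K)=Z(H)\times Z(K)$ and the fact that $\beta$ and $\gamma$ already land in the centres. If anything, your write-up is slightly more complete, since the paper only spells out the inclusion $\aut_c(H\times K)\subseteq\mathcal{Z}_{H,K}$ while you verify both directions of the equivalence.
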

\begin{proof}
Let $\varphi$ be an element of $\aut_c (H\times K)$. By either Theorem \ref{ThCompAut} or Theorem \ref{PropAb} or Theorem \ref{stem}, we have that $\aut(H\times K)\simeq \mathcal{A}_{H,K}$, so, if we take $\varphi=\left(\begin{array}{cc}\alpha &\beta \\ \gamma & \delta\end{array}\right)$ to be an automorphism of $H\times K$, we only have to show that $\alpha$ and $\delta$ are central automorphisms of $H$ and $K$, respectively. Also, we can show this just for $\alpha$, as for $\delta$ the proof is completely analogous. If $(h,1)$ is an element of $H\times K$, we have that $$(h,1)(Z(H)\times Z(K))=\varphi((h,1)(Z(H)\times Z(K)))=\alpha(h)Z(H)\times\gamma(h)Z(K)$$ and hence $\alpha(hZ(H))=hZ(H)$.
\end{proof}

\medskip

\subsection{Computing inverse automorphisms}\label{GAP}

Let $H$ and $K$ be finite groups of order $m$ and $n$, respectively, and assume that $m\leq n$. Let $\varphi$ be an endomorphism of $H\times K$. One frequent task is to check whether $\varphi$ is bijective (i.e. injective) or not. A simple algorithm to check injectivity for $\varphi$ would require $\binom{mn}{2}$ steps, in which we check if the image of each element of $H\times K$ via $\varphi$ is a value already taken by the previous ones.

Now, suppose that $H$ and $K$ have no common non-trivial direct factor and assume that we don't now anything about $\varphi$ except for its components, which may be defined by $\alpha_\varphi=\pi_H\varphi\iota_H$, $\beta_\varphi=\pi_H\varphi\iota_K$, $\gamma_\varphi=\pi_K\varphi\iota_H$ and $\delta_\varphi=\pi_K\varphi\iota_K$ (see Proposition \ref{end}). Now, Theorem \ref{ThCompAut} ensures that $\aut(H\times K)$ is isomorphic with $\mathcal{A}_{H,K}$, so that both $\alpha$ and $\delta$ are bijective. Calculate for instance $\delta^{-1}$, a task which can be accomplished in $n$ steps simply swapping coordinates inside its graph, and check if $\det\varphi=\alpha-\beta\delta^{-1}\gamma$, which is a map from $H$ to $H$, is invertible. Then we have found that $\varphi$ is or is not invertible in at most $n+\binom{m}{2}$. Notice that we could also calculate $\alpha^{-1}$ and $(\det_K\varphi)^{-1}$ and this would take $m+\binom{n}{2}$ steps. However, since $m\leq n$, then $m+\binom{n}{2}\leq n+\binom{m}{2}$ only in case $n=1,2$, so that looking for $\varphi^{-1}$ via $\det\varphi$ is a more convenient choice if we assume the not-so-restrictive hypothesis that $|H|>2$. Now Theorem \ref{propinv} yields that $\varphi$ is bijective and this is performed in an almost always better way than checking this on $\varphi$, since $\binom{mn}{2}\leq n+\binom{m}{2}$ only in case $m=1$. Finally, the inverse of $\varphi$ can be computed using Proposition \ref{invgen}. Then the following result is computationally useful.

\begin{prop} Let $H$ and $K$ be finite groups with no common non-trivial direct factor such that $|H|=m\leq n=|K|$ and let $\varphi$ be an endomorphism of $H\times K$. Then $\varphi$ can be checked to be bijective in $n+\binom{m}{2}$ steps.
\end{prop}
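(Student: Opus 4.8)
The plan is to turn the naive $\binom{mn}{2}$ bijectivity test on $H\times K$ into a test carried out on the much smaller factor $H$, by combining the structural result of Theorem \ref{ThCompAut} with the determinantal criterion of Theorem \ref{propinv}. The single structural input I would record first is that finite groups automatically satisfy both the maximal and the minimal conditions on normal subgroups; since by hypothesis $H$ and $K$ share no non-trivial direct factor, Theorem \ref{ThCompAut} gives $\aut(H\times K)\simeq\mathcal{A}_{H,K}$. The consequence I want to extract is that every automorphism of $H\times K$, read as a matrix via Proposition \ref{end}, has corner component $\delta\in\aut K$; contrapositively, if $\delta$ fails to be bijective then $\varphi$ cannot be bijective. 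Thus invertibility of $\delta$ is a precondition the algorithm may exploit, and for the automorphism $\varphi$ of the statement it is guaranteed.

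Next I would describe the two-step computation and count its cost. Step one inverts $\delta$ by running once through its graph and swapping the two coordinates of each of the $n$ pairs $(k,\delta(k))$; this costs $n$ operations and, because $\delta\in\aut K$, returns the graph of $\delta^{-1}$. (A collision encountered here would instead certify $\delta\notin\aut K$, hence non-bijectivity of $\varphi$, at no extra asymptotic cost, so the procedure is meaningful as a genuine decision test and not merely a verification.) Step two forms the self-map $\det_H\varphi=\alpha-\beta\delta^{-1}\gamma$ of $H$ and tests it for injectivity by comparing its $m$ images pairwise, which costs $\binom{m}{2}$ comparisons; since $H$ is finite, injectivity is equivalent to bijectivity. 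By Theorem \ref{propinv}, with $\delta$ invertible, $\varphi$ is bijective if and only if $\det_H\varphi$ is, so this test settles the question, and adding the two contributions yields the advertised bound $n+\binom{m}{2}$.

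The argument is essentially bookkeeping, so I do not expect a deep obstacle; the one point requiring care is the justification that inverting the graph of $\delta$ is legitimate and cheap, that is, that we may assume $\delta\in\aut K$ rather than first paying $\binom{n}{2}$ to verify it. This is precisely what Theorem \ref{ThCompAut} buys us, and it is also what makes expanding along $H$ (cost $n+\binom{m}{2}$) preferable to expanding along $K$ (cost $m+\binom{n}{2}$) once $m\le n$ and $|H|>2$, as already noted in the discussion preceding the statement.
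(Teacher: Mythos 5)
Your argument is correct and coincides with the one the paper gives in the discussion preceding the statement: finiteness supplies Max-n and Min-n, Theorem \ref{ThCompAut} forces $\delta\in\aut K$, inverting $\delta$ by swapping coordinates in its graph costs $n$ steps, and testing injectivity of $\det_H\varphi$ on $H$ costs $\binom{m}{2}$ comparisons, with Theorem \ref{propinv} closing the loop. Your added remarks on collision detection and on why one expands along $H$ rather than $K$ are consistent with the paper's own commentary.
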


If we do not already know whether $H$ and $K$ have some common non-trivial direct factor, we can still use the above algorithm, which will be better of the usual one in case either $\alpha$ or $\delta$ is invertible, but which will give no answer otherwise.

\medskip

As for the infinite case, something computationally useful can be said. Let $H$ and $K$ be groups, let $$\varphi=\left(\begin{array}{cc}\alpha &\beta \\ \gamma & \delta\end{array}\right)$$ be an endomorphism of $H\times K$ and assume that $K$ is infinite. Assume moreover that one has some further information on $\delta$, which allows to find the inverse $\delta^{-1}$ of $\delta$. This can happen if, for instance, $\aut K$ has a restricted structure or if $\delta$ just came with some specific hypotheses. In this case, to infer whether $\varphi$ is invertible or not it is sufficient to calculate the inverse of $\det_H\varphi$, which is a function on a group which is maybe more manageable than $K$. Also, if $\det_H\varphi$ is invertible, Proposition \ref{invgen} makes it possible to express $\varphi$ in terms of $\alpha,\beta,\gamma,\delta$ and $\det_H\varphi$. As an example, we have the following

\begin{prop} Let $H$ and $K$ be groups, let $H$ be finite of order $m$ and let $\varphi$ be an endomorphism of $H\times K$. If $\pi_K\varphi\iota_K$ is invertible, then $\varphi$ can be checked to be bijective in $\binom{m}{2}$ steps.
\end{prop}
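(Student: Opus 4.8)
The plan is to reduce the question of whether $\varphi$, an endomorphism of the possibly infinite group $H\times K$, is bijective to a bijectivity check of a single self-map of the finite group $H$. The engine for this reduction is Theorem \ref{propinv}. Writing $\varphi$ by means of its four components $\alpha,\beta,\gamma,\delta$ as permitted by Proposition \ref{end}, the component $\delta$ is precisely $\pi_K\varphi\iota_K$, which is invertible by hypothesis. Hence Theorem \ref{propinv} applies and tells us that $\varphi$ is invertible if and only if $\det_H\varphi=\alpha-\beta\delta^{-1}\gamma$ is invertible.

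Next I would observe that $\det_H\varphi$ is a function from $H$ to $H$, and since $H$ is finite of order $m$, invertibility of this self-map is equivalent to its injectivity. Following the convention of the preceding subsection, checking injectivity of a function on an $m$-element set is carried out by comparing, for each of the $\binom{m}{2}$ unordered pairs of distinct elements of $H$, whether their two images coincide. If no collision is found, $\det_H\varphi$ is injective, hence bijective, hence invertible, and $\varphi$ is an automorphism; if a collision is found, $\det_H\varphi$ is not injective and $\varphi$ fails to be bijective. Either way the verification terminates after at most $\binom{m}{2}$ comparisons, which is the claimed bound.

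The only delicate point is one of bookkeeping rather than of mathematics. The count presupposes that $\delta^{-1}$ is already available and that the evaluation of $\det_H\varphi$ on the elements of $H$ is not itself charged against the $\binom{m}{2}$ steps; this is exactly the standing assumption of the subsection, where one has enough information on $\delta$ to produce its inverse, for instance because $\aut K$ has restricted structure. The main obstacle, then, is to state cleanly that the figure $\binom{m}{2}$ measures only the injectivity test performed on $H$, so that the potentially unbounded cost of inverting $\delta$ on the infinite group $K$ is excluded by hypothesis. Granting this, the proposition follows at once from Theorem \ref{propinv} together with the elementary count above.
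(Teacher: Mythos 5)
Your argument is exactly the paper's: identify $\pi_K\varphi\iota_K$ with $\delta$, invoke Theorem \ref{propinv} to reduce invertibility of $\varphi$ to invertibility of $\det_H\varphi=\alpha-\beta\delta^{-1}\gamma$, and observe that this is a self-map of the finite group $H$ whose injectivity (equivalently bijectivity) is tested in $\binom{m}{2}$ pairwise comparisons. Your remark that the cost of producing $\delta^{-1}$ is excluded by the standing assumption of the subsection matches the paper's framing, so the proposal is correct and follows the same route.
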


These are only some of the possible applications of the determinants. For instance, a more detailed study of incompatible and totally incompatible group pairs can lead to extend the range of application.

\section{Examples}\label{SecEx}

Here we present several examples, whose aims are to show that our results are optimal, in a sense, and to present some instances of application of the determinants.

\subsection{$\mathcal{A}_{H,K}\not\subseteq \aut(H\times K)$}

We begin to show that Theorem \ref{ThCCompAut} does not hold in general. Namely, that $\mathcal{A}_{H,K}$ is not contained in $\aut(H\times K)$ even if $H$ and $K$ have no common non-trivial direct factor. In particular, the same counterexample shows also that Theorem \ref{ThCompAut} does not hold in general. The groups in the following example are borrowed from a well-known class of indecomposable torsion-free abelian groups (see, for instance, \cite[Lemma 88.3]{fuch2}).

\begin{example}\label{Ex1} There exist two torsion-free abelian groups of finite rank $H$ and $K$ with no common non-trivial direct summand such that $\mathcal{A}_{H,K}$ is not contained in $Aut(H\oplus K)$. Moreover, in general $\mathcal{A}_{H,K}$ need not be even a stable part of the monoid $(\mathcal{M}_{H,K},\cdot)$.
\end{example}
\begin{proof}
Let $p,q,r$ and $s$ be different positive prime numbers. Let $\{x,y\}$ be a basis of $\mathbb{Q}\oplus\mathbb{Q}$ and let $H$ and $K$ be the subgroups of $\mathbb{Q}\oplus\mathbb{Q}$ generated respectively by the following subsets $$\{sp^lx,q^my,r^n(x+y)\mid l,m,n\in\mathbb{Z}\}\quad\text{and}\quad\{p^lx,q^my,r^n(x+y)\mid l,m,n\in\mathbb{Z}\}.$$ Clearly, $H$ is a subgroup of $K$. Let $\theta$ be a non-zero homomorphism from $K$ to $H$. Since every torsion-free non-trivial quotient of $K$ has some non-trivial elements with infinite $p_1$- and $p_2$-height for two distinct prime numbers $p_1$ and $p_2$ and $H$ has no such elements, we have that $\theta$ is injective. Now $\theta$ acts preserving the heights, so that we may find three integers $a,b,c$ and positive integers $l,m$ and $n$ such that $\theta(x)=sp^alx$, $\theta(y)=q^bmy$ and $\theta(x+y)=r^cn(x+y)$, from which it follows that $sp^al=q^bm=r^cn$. However, $p$, $q$, $r$ and $s$ are different, so that there exists a positive integer $d$ such that $\theta:k\in K\mapsto sdk\in H$. Let now $h^K_s$ be the $s$-height function over $K$ and regard $H$ as a subgroup of $K$. As no element of $K$ has infinite $s$-height, for each non-trivial element $k$ of $K$ we have that $h^K_s(k)<h^K_s(\theta(k))$. Notice that from the generality of $\theta$, it follows in particular that $H$ and $K$ are not isomorphic. Furthermore, as $H$ is easily seen to be fully invariant in $K$,
if we denote with $h^H_s$ the $s$-height function over $H$, every homomorphism $\eta$ from $H$ to $K$ is such that $h^H_s(h)\leq h^H_s(\eta(h))$ for every element $h$ of $H$. This immediately implies that $K$ and $H$ are incompatible, since for every $\chi$ in $\Hom(H,K)$, $\psi$ in $\Hom(K,H)$ and $k\in K$, $h^K_s(k)<h^K_s(\chi\psi(k))$. Moreover, incompatible groups share no common non-trivial direct summand by Lemma \ref{LemComm}, so this is the case for $H$ and $K$.

Assume now the further conditions that $s$ is odd and that $s\not\equiv_p1$. Let $\gamma$ be the immersion of $H$ in $K$, let $\beta$ be the monomorphism which maps $K$ to $sK$ inside $H$ and take into account the following element of $\mathcal{A}_{H,K}$: 
$$\varphi=\left(\begin{array}{cc}1 &\beta \\ \gamma & 1\end{array}\right).$$ If we prove that the endomorphism $1-\gamma\beta$ of $K$ is not surjective, then we are done by applying Theorem \ref{propinv}. Assume the existence of an element $k$ of $K$ such that $k-\gamma\beta(k)=x$ and call $p^dex$ the $x$-component of $k$, for an integer $d$ and a non-zero integer $e$. Then $p^dex-sp^dex=x$, so that $(1-s)p^de=1$. Since $s$ is odd, $d$ cannot be $0$ and this implies that $p$ divides $1-s$, which is against our other assumption on $s$.
Then $\varphi$ is not an automorphism of $H\times K$ and hence $\mathcal{A}_{H,K}$ contains elements which are not automorphisms of $H\times K$.

Finally, $$\left(\begin{array}{cc}1 &-\beta \\ \gamma & 1\end{array}\right)^2=\left(\begin{array}{cc}* &* \\ * & 1-\gamma\beta\end{array}\right)$$
does not belong to $\mathcal{A}_{H,K}$, as $1-\gamma\beta$ is not an automorphism of $K$. This shows that $\mathcal{A}_{H,K}$ need not be closed under row-by-column multiplication.
\end{proof}

From the previous example we can get also the following information, which can be proved either directly or also by an application of Proposition \ref{htotincompatible}.

\begin{example}\label{ExTotInc}
There exist two torsion-free abelian groups $H$ and $K$ which are incompatible and not totally incompatible.
\end{example}

As these groups are incompatible and $\mathcal{A}_{H,K}$ is not a subset of $Aut(H\times K)$, they can also seen in connection with Proposition \ref{PropCInc}.

\medskip

Incompatibility and total incompatibility are not equivalent also in the class of periodic abelian groups. To show this we have the following

\begin{example}\label{ExTotIncPer}
There exist two periodic abelian groups $H$ and $K$ with no common non-trivial direct summand such that $\mathcal{A}_{H,K}$ is not contained in $Aut(H\oplus K)$. Moreover, $H$ and $K$ are incompatible and not totally incompatible.
\end{example}
\begin{proof}
Let $p$ be a prime and let $$H=\bigoplus\limits_{i\in\mathbb{N}\setminus\{0\}}\gen{h_i}\quad\text{ and }\quad K=\bigoplus\limits_{i\in\mathbb{N}\setminus\{0\}}\gen{k_i}$$ where, for each $i\in\mathbb{N}\setminus\{0\}$, $|\gen{h_i}|=p^{2i-1}$ and $|\gen{k_{i}}|=p^{2i}$. Let $\psi$ and $\chi$ be elements of $\Hom(H,K)$ and of $\Hom(K,H)$, respectively, and assume for a contradiction the existence of two non-trivial elements $h$ and $k$ of $H$ and $K$, respectively, such that $\psi(h)=k$ and $\chi(k)=h$. By this property, $h$ and $k$ has the same order. As $H$ has no elements of infinite height, we may find a cyclic direct summand $\gen{a}$ of $H$ containing $h$ of maximal order, call it $p^m$ for a positive integer $m$ which is odd by the structure of $H$. As also $K$ has no elements of infinite height, we find a cyclic direct summand of $K$ containing $\psi(a)$, which has order $p^n$ for some even positive integer. Notice that, since $h$ and $k$ have the same order, $n$ must be strictly greater than $m$. By the same reason, $\chi\psi(a)$ must be contained in a direct summand of $H$ containing $h$, which has order strictly greater than $p^n$ and this is impossible. Therefore, $H$ and $K$ are incompatible.

Now we show that $H$ and $K$ are not totally incompatible. To this aim let $\gamma$ be defined by the assignments $h_i\in H\mapsto pk_i\in K$ for every $i\in\mathbb{N}$, let $\beta$ be defined by $k_i\in K\mapsto ph_{i+1}\in H$ for every $i\in\mathbb{N}$ and consider the following element of $\mathcal{A}_{H,K}$: 
$$\varphi=\left(\begin{array}{cc}1 &\beta \\ \gamma & 1\end{array}\right).$$

As, for instance, $h_1$ cannot be an element of the image of $1-\beta\gamma$, it follows that the latter is not surjective. Then Theorem \ref{propinv} yields that $\varphi$ is not invertible and $\mathcal{A}_{H,K}$ is not contained in $Aut(H\times K)$. Finally, one can prove that $H$ and $K$ are not totally incompatible by inspecting $\beta\gamma$ or also by an application of Proposition \ref{htotincompatible}.
\end{proof}

\subsection{$\aut(H\times K)\not\subseteq \mathcal{A}_{H,K}$}

We now turn to the reverse inclusion. We show that, differently from what stated for instance in Theorem \ref{ThCompAut} or in Theorem \ref{PropAb}, if $H$ and $K$ are groups $Aut(H\times K)$ need not be contained in $\mathcal{A}_{H,K}$ in general, even if $H$ and $K$ share no common non-trivial direct factor.

\begin{example}\label{alpha0} There exist two torsion-free abelian groups $H$ and $K$ with no common non-trivial direct summand such that $Aut(H\oplus K)$ is not contained in $\mathcal{A}_{H,K}$.
\end{example}
\begin{proof}
Let $H$ and $K$ be the groups constructed in Example \ref{Ex1}, where, in this case, $s=2$ and $r=3$. From the first paragraph of Exercise \ref{Ex1}, we get that $H$ and $K$ are incompatible and that in particular they share no common non-trivial direct summand. Let $\beta:x\in K\mapsto sx\in H$, let $\gamma$ be the immersion of $H$ in $K$, let $\delta$ be the endomorphism of $K$ defined by the multiplication by $r$ and take into account the following element of $\End(H\oplus K)$: $$\varphi=\left(\begin{array}{cc}1 &\beta \\ \gamma & \delta\end{array}\right).$$ Since for instance $x$ has no infinite $r$-height in $K$, $\delta$ is not surjective and hence $\varphi$ is not an element of $\mathcal{A}_{H,K}$. On the other hand, take into account $\det_K\varphi=\delta-\gamma\beta$. If $k$ is any element of $K$, then $\delta-\gamma\beta(k)=rk-sk=k$ and hence $\det_K\varphi$ is the identity. It now follows from Theorem \ref{propinv} that $\varphi$ is an automorphism of $H\times K$.
\end{proof}

In the previous examples, $H$ and $K$ were abelian. We are now exhibiting two incompatible non-abelian groups showing that, for instance, the hypothesis on normal subgroups in Theorem \ref{ThCompAut} cannot be removed also when the groups are not abelian.

\begin{example}\label{Ex3} There exist two infinite groups $H$ and $K$ which are compatible, centrally incompatible and share no non-trivial direct factor and there exist two automorphisms $\varphi$ and $\varphi'$ of $Aut(H\times K)$ such that $\pi_H\varphi\iota_H$ and $\pi_K\varphi'\iota_K$ are not surjective. In particular, $Aut(H\times K)$ is not contained in $\mathcal{A}_{H,K}$.
\end{example}
\begin{proof}
Let $H=\gen{x}$ be an infinite cyclic group and let $$K=\gen{y,z\mid z^3=1,z^y=z^{-1}},$$ namely a group generated by an infinite cyclic group acting as the inversion on a cyclic group of order $3$. Here we employ multiplicative notation. Let $\alpha$, $\beta$ and $\gamma$ be the homomorphisms defined by the following rules $$\alpha:x\in H\mapsto x^3\in H,\quad\beta:\begin{cases}y\in K\mapsto x\in H \\z\in K\mapsto 1\in H\end{cases} \text{and} \quad\gamma:x\in H\mapsto y^2\in K$$ and take $$\varphi=\left(\begin{array}{cc}\alpha &\beta \\ \gamma & 1\end{array}\right)$$
to be an endomorphism of $H\times K$. Since $\det_H\varphi=1$, $\varphi$ is invertible. Moreover, with the help of Proposition \ref{invgen}, we may easily find the inverse of $\varphi$, namely $$\left(\begin{array}{cc}1 &-\beta \\ -\gamma & 1+\gamma\beta\end{array}\right),$$ so that they both belong to $\aut(H\times K)$. However, $\pi_H\varphi\iota_H$ and $\pi_K\varphi^{-1}\iota_K$ are not surjective, so that neither $\varphi$ nor $\varphi^{-1}$ belong to $\mathcal{A}_{H,K}$.

Now, if we take $\psi\in\Hom(H,K)$ to be the homomorphism defined by $\psi:x\in H\mapsto y\in K$ and take $\chi\in\Hom(K,H)$ to be the only homomorphism which sends $y$ to $x$, then we get that $H$ and $K$ are compatible. So, the last thing to prove is that $H$ and $K$ are centrally incompatible. To this aim, take $\sigma$ in $\Hom(H,Z(K))\setminus\{0\}$, $\tau$ in $\Hom(K,Z(H))\setminus\{0\}$ and let $k\in K$ such that $\sigma\tau(k)=k$. By the structure of $H$ and $K$, we have that there are two non-zero integers $m$ and $n$ such that $\sigma$ and $\tau$ may be defined by the following correspondences: $\sigma:x\in H\mapsto y^{2m}\in K$ and $\beta:\begin{cases}y\in K\mapsto x^{n}\in H \\z\in K\mapsto 1\in H\end{cases}$. It is now clear that $k=k^{mn}$ and also that $k\in\gen{y}$. These conditions hold true only in case $k$ is trivial. Hence $H$ and $K$ are incompatible.
\end{proof}

Starting from this example, one can substitute $H$ with the $H$ in Example \ref{alpha0} and substitute the copy of $\mathbb{Z}$ acting on $\gen{z}$ with the $K$ in Example \ref{alpha0} acting in the same fashion on $\gen{z}$. Then it is not difficult to prove the following

\begin{example}\label{Ex4} There exist two infinite incompatible groups $H$ and $K$ and two automorphisms $\varphi$ and $\varphi'$ of $Aut(H\times K)$ such that $\pi_H\varphi\iota_H$ and $\pi_K\varphi'\iota_K$ are not surjective. In particular, $Aut(H\times K)$ is not contained in $\mathcal{A}_{H,K}$.
\end{example}

\subsection{More on stem groups}

Here we provide an analogue of Example \ref{ExTotInc} for stem groups. In Theorem \ref{stem} we proved that if either $H$ or $K$ is a stem group, then $\aut(H\times K)\simeq\mathcal{A}_{H,K}$. Moreover, it is an easy remark that in this case $H$ and $K$ are centrally incompatible. If in the same hypothesis $H$ and $K$ were totally incompatible, than an application of Proposition \ref{htotincompatible} would have implied the thesis of Theorem \ref{stem} immediately. However, this is not the case, even if both $H$ and $K$ are stem groups.

\begin{example}\label{Ex5} There exist two infinite stem groups $H$ and $K$ which are incompatible but not totally incompatible.
\end{example}
\begin{proof}
Let $p,q,r$ and $s$ be different positive primes and let $X$ and $Y$ be the subgroups of $\mathbb{Q}\oplus\mathbb{Q}$ generated respectively by the subsets $$\{p^lx,q^my,pqr^n(x+y)\mid l,m,n\in\mathbb{Z}\}\quad\text{and}\quad\{p^lx,q^my,r^n(x+y)\mid l,m,n\in\mathbb{Z}\}$$ for a basis $\{x,y\}$ of $\mathbb{Q}\times\mathbb{Q}$. Let $Q_1=X\oplus X$ and $Q_2=Y\oplus Y$ and let $D$ be the direct product of two Prüfer groups of type $p^\infty$, and $q^\infty$, respectively. Then it is possible to find, for instance making use of the five-term homology sequence, two epimorphisms $\varphi_1:M(Q_1)\twoheadrightarrow D$ and $\varphi_2:M(Q_2)\twoheadrightarrow D$, where $M(Q_1)$ and $M(Q_2)$ are the Schur multipliers of $Q_1$ and $Q_2$, respectively. By the Universal Coefficients Theorem we can now take $H$ and $K$ to be the central extensions of $Q_1$ by $D$ induced by $\varphi_1$ and of $Q_2$ by $D$ induced by $\varphi_2$, respectively. Since $\varphi_1$ and $\varphi_2$ are surjective, $H'\leq Z(H)$ and $K'\leq Z(K)$. Moreover, no element of infinite order of $H$ or $K$ is central and hence $H'=Z(H)$ and $K'=Z(K)$. In particular, $H$ and $K$ are stem groups. For the sake of simplicity, we assume that $Z(H)=D=Z(K)$ and that $H/D=Q_1$ and $K/D=Q_2$.


For $j\in\{p,q\}$, let now $D_j$ be the $j$-component of $D$ and let $\{d_{j,i}\mid i\in\mathbb{N}\setminus\{0\}\}$ be a set of generators of $D_j$, where $d_{j,i}^p=d_{j,i-1}$ for every $i\in\mathbb{N}\setminus\{0,1\}$ and $d_{j,1}^j=1$. Moreover, define $\psi_1:(a,b)\in Q_1\mapsto a\wedge b\in M(Q_1)$ and  $\psi_2:(a,b)\in Q_2\mapsto a\wedge b\in M(Q_1)$, where we regard $M(Q_i)$ as the exterior square of $Q_i$ for $i\in\{1,2\}$. Without loss of generality, we may assume that $$\varphi_1\psi_1(1/p^ix,1/p^ix)=d_{p,i},\quad\varphi_1\psi_1(1/q^iy,1/q^iy)=d_{q,i}\quad\text{and}\quad$$
$$\varphi_1\psi_1(pq/r^i(x+y),pq/r^i(x+y))=1$$ and also  $$\varphi_2\psi_2(1/p^ix,1/p^ix)=d_{p,i},\quad\varphi_2\psi_2(1/q^iy,1/q^iy)=d_{q,i}\quad\text{and}\quad$$
$$\varphi_2\psi_2(1/r^i(x+y),1/r^i(x+y))=1$$ for each $i\in\mathbb{N}\setminus\{0\}$.
These are in fact some commutator relations in $H$ and $K$ and the other ones can be defined accordingly. Looking at the two set of relations, we may regard $H$ as a subgroup of $K$ and take $\gamma$ to be the immersion of the former into the latter. Moreover, let $\overline{\beta}:z\in Q_2\mapsto pqz\in Q_1$ and $\tilde{\beta}:(a,b)\in D\mapsto (a^{p^2q^2},b^{p^2q^2})\in D$. If we look at $\varphi_1$ and $\varphi_2$ as the cohomology classes of the extensions
$$\begin{tikzcd}
D \arrow[tail]{r} & H\arrow[two heads]{r} & Q_1
\end{tikzcd}\quad\text{and}\quad\begin{tikzcd}
D \arrow[tail]{r} & K\arrow[two heads]{r} & Q_2,
\end{tikzcd}$$
respectively, it is not difficult to deduce from the relations that $\tilde{\beta}_*(\varphi_2)=\overline{\beta}^*(\varphi_1)$, so that it follows form Proposition II.4.3 in \cite{Stamm} that we may find a homomorphism $\beta:K\rightarrow H$ inducing $\tilde{\beta}$ on $D$ and $\overline{\beta}$ on $K/D$. If we now take into account the normal endomorphism $\gamma\beta$ of $K$, we can see that it always sends elements of infinite order to elements of infinite order. In particular, $H$ and $K$ are not totally incompatible.

\smallskip

Finally, we want to prove that $H$ and $K$ are incompatible. To this aim, let $\psi$ and $\chi$ be elements of $\Hom(K,H)$ and $\Hom(H,K)$, respectively, and let $k$ be an element of $K$ such that $\chi\psi(k)=k$. As each direct factor of $Q_1$ centralizes $D$ and $D$ is divisible, we can find two subgroups $X_1$ and $X_2$ of $H$ such that $X_1\simeq X_2\simeq X$ and $H=(X_1\times D)(X_2\times D)$. The same can be clearly done with $K$, so that we find two subgroups $Y_1$ and $Y_2$ such that $Y_1\simeq Y_2\simeq Y$ and $K=(Y_1\times D)(Y_2\times D)$. For $i\in\{1,2\}$, write $X_i$ and $Y_i$ as the subgroups of $\mathbb{Q}\times\mathbb{Q}$ generated respectively by the following subsets $$\{p^lx_i,q^my_i,pqr^n(x_i+y_i)\mid l,m,n\in\mathbb{Z}\}\quad\text{and}\quad \{p^lx_i,q^my_i,r^n(x_i+y_i)\mid l,m,n\in\mathbb{Z}\}.$$
Here $X_i$ is a subgroup of $Y_i$, and in fact we will regard $Q_2$ as a subgroup of $Q_1$ for the sake of simplicity. Moreover, if $W$ is one between $X$ and $Y$, let $W_{i,j}$ be the subgroup of $W_i$ generated by every element of infinite $j$-height for $i\in\{1,2\}$ and $j\in\{p,q,r\}$. As $X_{1,j}X_{2,j}D/D$ is the subgroup of $Q_1$ containing all and only the elements of infinite $j$-height, then every element of infinite $j$-height of $H$ is contained in $X_{1,j}X_{2,j}D$. Similarly, every element of infinite $j$-height of $K$ is contained in $Y_{1,j}Y_{2,j}D$.

Let $\overline{\psi}$ be the homomorphism from $Q_2$ to $Q_1$ induced by $\psi$ and let $\overline{\chi}$ be the homomorphism $Q_1$ to $Q_2$ induced by $\chi$. Since $\psi$ preserves heights, we may find the integers $a,a',a'',b,b',b'',c,c',c''$ and positive integers $l,l'$, $l'',m,m',m'',n,n'$ and $n''$ such that we get the following equalities modulo $D$ $$\overline{\psi}(x_1)=p^al(p^{a'}l'x_1+p^{a''}l''x_2),\quad\overline{\psi}(y_1)=q^bm(q^{b'}m'y_1+q^{b''}m''y_2)\quad\text{and}\quad$$ $$\overline{\psi}(x_1+y_1)=pqr^cn(r^{c'}n'(x_1+y_1)+r^{c''}n''(x_2+y_2)).$$ This yields that there are two integers $s$ and $t$ such that (again modulo $D$) $$\overline{\psi}(x_1)=pq(sx_1+tx_2),\quad\overline{\psi}(y_1)=pq(sy_1+ty_2)\quad\text{and}\quad$$ $$\overline{\psi}(x_1+y_1)=pq(s(x_1+y_1)+t(x_2+y_2)).$$ In the same way, we find the integers $u$ and $v$ such that the following equalities modulo $D$ hold $$\overline{\psi}(x_2)=pq(ux_1+vx_2),\quad\overline{\psi}(y_2)=pq(uy_1+vy_2)\quad\text{and}\quad$$ $$\overline{\psi}(x_2+y_2)=pq(u(x_1+y_1)+v(x_2+y_2)).$$ In particular, $\im\overline{\psi}$ is always contained in $pqQ_2$, when we see it as a subgroup of $Q_1$. Let now $h_j$ be the $j$-height function over $K$ for $j\in\{p,q,r\}$. As in Example \ref{Ex1}, since $Q_1$ is totally invariant in $Q_2$, we have that for every non-trivial element $k_j$ of $Y_{1,j}Y_{2,j}D/D$, $h_j(k_j)<h_j(\overline{\chi}\overline{\psi}(k_j))$. From this we get that $k$ has to be a torsion element. For if we write $kD=k_pk_qk_r$, where $k_p\in Y_{1,p}Y_{2,p}D/D$, $k_q\in Y_{1,q}Y_{2,q}D/D$ and $k_r\in Y_{1,r}Y_{2,r}D/D$, then $\overline{\chi}\overline{\psi}(k_pk_qk_r)=k_pk_qk_r$, so that $h_q(k_p)=\overline{\chi}\overline{\psi}(k_p)$, $h_p(k_q)=\overline{\chi}\overline{\psi}(k_q)$ and $h_p(k_r)=\overline{\chi}\overline{\psi}(k_r)$ and hence $k_pk_qk_r=0$. Let now $d$ be any element of $D$. Then we may find elements $k_{1,p}$, $k_{2,p}$, $k_{1,q}$ and $k_{2,q}$ of $Y_{1,p}$, $Y_{2,p}$, $Y_{1,q}$ and $Y_{2,q}$, respectively, such that $d=[k_{1,p},k_{2,p}][k_{1,q},k_{2,q}]$. However, $\overline{\chi}\overline{\psi}$ maps $Q_2$ to $pqQ_2$ and hence $\chi\psi(d)=\chi\psi([k_{1,p},k_{2,p}])\chi\psi([k_{1,q},k_{2,q}])$, which belongs to $\gen{d^{p^2q^2}}<\gen{d}$. Therefore, $k=1$ and $H$ and $K$ are incompatible.
\end{proof}

\section{Future work}\label{SecFut}
The results presented in this paper can be seen as a new starting point for a study of automorphisms of group extensions, beginning from the simplest ones, namely direct products. From this point of view, an answer to the following problem can be of great interest.

\begin{prob}
Define a determinant for group extensions.
\end{prob}

If possible, this could be made by means of homological tools developed for instance in \cite{Stamm}. Going easy, the following may be the next step of enquiry (on this, see for instance \cite{Curr} and the more recent \cite{CCCo}).

\begin{prob}
Define a determinant for semidirect products of groups.
\end{prob}

These are general questions, which can be tackled independently from the study of automorphisms of direct products of groups, which is still ongoing. Indeed, there is work to do also for direct products of two indecomposable groups. In this case, a possible way to study when direct products of groups have "good" automorphism group can be achieved from a deeper study of incompatible pairs of various kind. This problems can also have an interest on their own. For instance, a classification of pairs of totally incompatible abelian groups can show to what extent the determinants can be used for the whole automorphism group.

\begin{prob}
Study (centrally/totally) incompatible group pairs.
\end{prob}

Among other things, we believe something can be easily said for indecompos\-able torsion-free abelian groups of finite rank. On the other hand, as far as central incompatibility is concerned, stem groups suggest the following definition. Let $n$ be a positive integer. We say that two groups $H$ and $K$ are \textit{(centrally) totally incompatible of length} $n$ or, equivalently, that the pair $(H,K)$ is \textit{(centrally) totally incompatible of length} $n$, if $n$ is the least positive integer $n$ such that, for every $\sigma\in\Hom(H,K)$ ($\in\Hom(H,Z(K))$, respectively), $\tau\in\Hom(K,H)$ ($\in\Hom(H,Z(K))$, respectively), $h\in H$ and $k\in K$, if $\sigma\tau$ and $\tau\sigma$ are normal, then either
\begin{itemize}
\item[(1)] $(\sigma\tau)^n(k)=1$ or
\item[(2)] $(\tau\sigma)^n(h)=1$.
\end{itemize}
As pointed out in Subsection \ref{SubStem}, if one between $H$ and $K$ is a stem group, then the pair $(H,K)$ is centrally totally incompatible of length at most $2$, while if both $H$ and $K$ are stem groups, then the pair $(H,K)$ is centrally totally incompatible of length at most $1$. Clearly, if $(H,K)$ is totally incompatible of fixed length, it is also totally incompatible and hence the group $H\times K$ has a well-described automorphism group.  From this perspective, a study of this pairs can be interesting.

\begin{prob}
Study (centrally) totally incompatible group pairs of length $n$ for a fixed positive integer $n$.
\end{prob}

These problems can also be rephrased for $m$-tuples of groups. This task should not be technically very different from the case $m=2$.

\begin{prob}
Develop explicitly an incompatibility theory for direct products of arbitrari\-ly many groups.
\end{prob}

Independently from this, it would be useful to inspect the behaviour of automorphisms of direct products of a finite number of groups. A taste of this has already been given in Subsection \ref{SubSketch}, as for the determinants, but the work is far from being concluded. For example, one could ask for an explicit formula for the inverse of an automorphism, just as in Proposition \ref{invgen} as for the case $n=2$.

\begin{prob}
Let $H_1,\ldots, H_n$ be groups and let $\varphi$ be an automorphism of $H_1,\ldots, H_n$ for which a determinant can be defined. Express $\varphi^{-1}$ in terms of the components of $\varphi$ as an element of $\mathcal{M}_{H_1,\ldots, H_n}$.
\end{prob}

Moreover, the general case, in particular the case in which the factors of a direct product of groups do not satisfy some incompatibility hypothesis, is far from being completed. Starting from \cite{Bid}, one can ask which of those results hold for not necessarily finite groups. Then we state the following, very vague problem.

\begin{prob}
Develop the theory of automorphisms of direct products of finitely many groups.
\end{prob}

Finally, as suggested in Subsection \ref{GAP}, an implementation of the determinants and a further inspection of their computational advantages may prove useful for any computer algebra system, even in dealing with some infinite structure.

\begin{prob}
Implement different algorithms using the determinants and study their computational costs.
\end{prob}

\addcontentsline{toc}{chapter}{\hspace{13pt} Bibliography}

\bigskip

\begin{center}
\rule{0.5\textwidth}{.4pt}
\end{center}

\bigskip

\bigskip

MATTIA BRESCIA, Dipartimento di Matematica e Applicazioni,

Università degli Studi di Napoli Federico II

Corso Umberto I, 40, Napoli, Italy

e-mail: mattia.brescia@unina.it

\end{document}